\numberwithin{equation}{section}
\numberwithin{equation}{section}
\theoremstyle{definition}
\newtheorem{definition}[equation]{Definition}
\theoremstyle{definition}
\newtheorem{remark}[equation]{Remark}
\theoremstyle{definition}
\newtheorem{convention}[equation]{Convention}
\theoremstyle{definition}
\theoremstyle{lemma}
\theoremstyle{lemma}
\newtheorem{lemma}[equation]{Lemma}
\theoremstyle{theorem}
\newtheorem{theorem}[equation]{Theorem}
\theoremstyle{proposition}
\newtheorem{proposition}[equation]{Proposition}
\theoremstyle{corollary}
\newtheorem{corollary}[equation]{Corollary}
\theoremstyle{corollary}
\theoremstyle{definition}
\theoremstyle{example}
\theoremstyle{proposition}
\theoremstyle{definition}
\newcommand{\E}{\mathbf{E}}
\newcommand{\PP}{\mathbf{P}}
\newcommand{\R}{\mathbf{R}}
\newcommand{\Q}{\mathsf{Q}}
\renewcommand{\P}{\mathsf{P}}
\newcommand{\C}{\mathbf{C}}
\newcommand{\N}{\mathbf{N}}
\renewcommand{\d}{d}
\renewcommand{\t}{{\tau^*}}
\title[Noise-Induced Stabilization of Planar Flows II]{Noise-Induced
  Stabilization of Planar Flows II}
\author[D. P. Herzog]{David P. Herzog}
\author[J. C. Mattingly]{Jonathan C. Mattingly}
\begin{document}
\maketitle

\begin{abstract}
  We continue the work started in Part I \cite{HerzogMattingly2013I}, showing how the addition of noise can stabilize an otherwise unstable system.  The
  analysis makes use of nearly optimal Lyapunov functions. In this
  continuation, we remove the main limiting assumption of Part I by an
  inductive procedure as well as establish a lower bound which shows that
  our construction is radially sharp. We also prove a version of Peskir's \cite{Peskir_07}
generalized Tanaka formula adapted to  patching together Lyapunov
functions. This greatly simplifies the analysis used in previous works. 
 \end{abstract} 

\section{Introduction}

In Part I of this work \cite{HerzogMattingly2013I}, we investigated the following complex-valued dynamics
\begin{equation}\label{eqn:genpoly}
\left\{
  \begin{aligned}
    dz_t &=(a z_t^{n+1}+ a_n z_t^n + \cdots + a_0) \, dt + \sigma \, dB_t\\
    z_0 &\in \C
  \end{aligned}\right.
\end{equation}
where $n\geq 1$ is an integer, $a\in \C\setminus \{0\}$, $a_i \in \C$, $\sigma \geq 0$, and
$B_t=B_t^{(1)} + i B_t^{(2)}$ is a complex Brownian motion defined on
a probability space $(\Omega, \mathcal{F}, \PP)$.  There, we studied
how the presence of noise ($\sigma >0$ in \eqref{eqn:genpoly}) could
stabilize the unstable underlying deterministic system ($\sigma=0$ in
\eqref{eqn:genpoly}).
To prove stability in the stochastic perturbation, we developed a
framework for building Lyapunov functions and applied it to
\eqref{eqn:genpoly} assuming that the drift in equation
\eqref{eqn:genpoly} did not contain any ``significant" lower-order
terms; that is, we assumed that $a_j=0$ for $\lfloor \frac{n}{2} \rfloor \leq j \leq
n$.  This was done in order to focus on the overarching elements of
the construction of Lyapunov functions and to avoid any additional
complexities caused by the presence of such lower-order terms.  In
this paper, we give an inductive asymptotic argument which shows how
to remove this assumption, thereby proving the full version of
Theorem~\ref{I-thm:decayStMeasure} of Part I
\cite{HerzogMattingly2013I}.  Here, we also provide a radially sharp
lower bound on the decay rate of the invariant measure's density as
stated in Theorem~\ref{I-thm:limit_bound} of Part I
\cite{HerzogMattingly2013I}. This work extends and strengthens a
stream of results on similar problems \cite{AKM_12, BodDoe_12, GHW_11,
  Her_11, Sch_93}.

As first glance, it is surprising that the general case is
substantially more complicated than those cases covered in Part I
\cite{HerzogMattingly2013I}, as intuition suggests that the behavior
of the process $z_t$ at infinity is determined by the leading-order
term $z^{n+1}$ and the noise.  We will see here, however, that there
is a range in which each of the intermediate lower-order terms becomes
dominant in the angular direction at infinity as one moves towards to regions where noise
dominates.  The scaling analysis of Section~\ref{I-sec:PiecewiseI} of
Part I \cite{HerzogMattingly2013I} hinted at this possibility when we
employed our simplifying assumption, for it implied that the dominant
balance of terms transferred directly from the leading order term
$z^{n+1}$ to the angular diffusion term without any interference from
the remaining lower-order terms. In this paper, we will perform the
analogous analysis for the general case in
Section~\ref{sec:PiecewiseBeh}, showing how to correctly study the
process at infinity in the presence of the intermediate lower-order
terms.  We will see, in particular, that the analysis used in
Section~\ref{I-sec:PiecewiseI} of Part I \cite{HerzogMattingly2013I}
breaks down in ``small" regions containing the explosive trajectories
of the deterministic system ($\sigma =0$ in equation
\eqref{eqn:genpoly} ) and that the additional terms produce
intermediate boundary layers which surround the inner most layer where
noise dominates.

We begin in Section~\ref{sec:preliminaries} by recalling the general
setup of Part I \cite{HerzogMattingly2013I}.  There, we also state the
main results we will prove in this paper. In
Section~\ref{sec:PiecewiseBeh}, we perform the asymptotic analysis
which guides and motivates the rest of the work.  Specifically, we
will use the asymptotically dominant operators yielded from it to
define our Lyapunov functions in Section~\ref{sec:constructionPsigen}
by using a succession of associated PDEs based on these operators. In
Section~\ref{sec:boundaryFluxCalculations}, we analyze boundary flux
terms in order to show that the local Lyapunov functions can be
patched together to produce a global Lyapunov function. Using these
calculations, we verify the needed global Lyapunov structure in
Section~\ref{sec:CheckingTheGlobalLyapunovBounds}. In
Section~\ref{sec:optimality}, we show that the family of Lyapunov
functions we have constructed are radially optimal by establishing a
matching lower bound at infinity of the invariant probability density
function. In Section~\ref{sec:peskir}, we prove a version of Peskir's
generalized Tanaka formula \cite{Peskir_07} which allows to avoid
$C^2$-smoothing along the boundaries of the local Lyapunov functions.
Being able to avoid such smoothing greatly simplifies former similar
analyses \cite{AKM_12, GHW_11, Her_11}. In
Section~\ref{sec:concluions}, we make some concluding remarks and
suggestions for possible directions of future research.

\section{Preliminaries}
\label{sec:preliminaries}
  In this section, we will both recall the general setup of Part I~\cite{HerzogMattingly2013I} and state the main results to be proved in this paper.  Throughout this remainder of this work, we will study more generally the complex-valued SDE
\begin{align} 
  \label{eqn:polyZ} 
  dz_t = [a z_t^{n+1} + F(z_t,\bar{z}_t)]\, dt + \sigma \, dB_t
\end{align} 
where $a\in \C\setminus \{ 0\}$, $n\geq 1$, $\sigma>0$, $B_t=B_t^1 + i B_t^2$ is a complex
Brownian motion and $F(z, \bar{z})$ is a complex polynomial in the
variables $(z, \bar{z})$ with $F(z,\bar{z}) =\mathcal{O}(|z|^n)$
as $|z|\rightarrow \infty$.  This is a slight generalization of the system \eqref{eqn:genpoly} in that $F(z, \bar{z})$ need not be a complex polynomial in the variable $z$ only.     

The main goal of this work is to prove the following result.
\begin{theorem}\label{thm:decayStMeasure} The Markov process defined
  by \eqref{eqn:polyZ} is non-explosive and possesses a unique
  stationary measure $\mu$.  In addition, $\mu$ satisfies:
\begin{align*}
\int_\C (1+|z|)^\gamma \, d\mu(z) < \infty \,\,\text{ if and only if }\,\, \gamma < 2n.  
\end{align*}
Furthermore, $\mu$ is
ergodic and has a probability density function $\rho$ with respect to
Lebesgue measure on $\R^2$ which is smooth and everywhere positive.
 \end{theorem}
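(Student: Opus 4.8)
The plan is to establish each assertion — non-explosiveness, existence of a stationary measure with the stated moment bound, uniqueness, ergodicity, and the smoothness and positivity of the density — by combining a global Lyapunov function with standard regularity theory for hypoelliptic diffusions. Since the noise $\sigma\,dB_t$ is nondegenerate (indeed $\sigma>0$ times a full-rank complex Brownian motion), the generator $\mathcal L = \tfrac{\sigma^2}{2}\Delta + b\cdot\nabla$ is uniformly elliptic, so Hörmander's condition holds trivially and the transition kernel $P_t(z,\cdot)$ has a smooth, strictly positive density for $t>0$ by interior parabolic regularity and the strong maximum principle / support theorem. This immediately gives the qualitative part of the conclusion (smooth, everywhere-positive $\rho$) once a stationary measure is known to exist, and it also gives irreducibility and the strong Feller property, which upgrade existence of a stationary measure to uniqueness and ergodicity via the Doob–Khasminskii theorem.

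The analytic heart of the argument is the construction of a Lyapunov function $V\colon\C\to[1,\infty)$ with $V(z)\to\infty$ as $|z|\to\infty$ satisfying a drift inequality of Foster–Lyapunov type, $\mathcal L V \le C - c\,V^{1-\epsilon}$ (or $\mathcal LV\le C-c\,W$ with $W$ comparable to $(1+|z|)^{2n}$ outside a compact set). I would invoke exactly the family of Lyapunov functions built in Sections~\ref{sec:PiecewiseBeh}--\ref{sec:CheckingTheGlobalLyapunovBounds}: the asymptotic analysis produces local functions $\Psi_i$ on the angular/radial regions dictated by the successive dominant balances, Section~\ref{sec:boundaryFluxCalculations} controls the boundary flux terms across the interfaces, and Section~\ref{sec:CheckingTheGlobalLyapunovBounds} patches them — using the generalized Tanaka formula of Section~\ref{sec:peskir} to avoid $C^2$-smoothing — into a single global $V$ with $\mathcal LV$ bounded above by a negative multiple of a function growing like $|z|^{2n}$ at infinity, plus a bounded error. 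From $\mathcal LV \le C - c\,|z|^{2n}\mathbf 1_{|z|\ge R}$, Khasminskii's test for non-explosion gives non-explosiveness, and the Krylov–Bogolyubov averaging argument applied to the tight family $\tfrac1T\int_0^T P_t(z_0,\cdot)\,dt$ yields existence of a stationary $\mu$; integrating the drift inequality against $\mu$ (justified by a stopping-time/Fatou argument) gives $\int |z|^{2n-\delta}\,d\mu<\infty$ for every $\delta>0$, hence $\int(1+|z|)^\gamma\,d\mu<\infty$ for all $\gamma<2n$.

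For the reverse implication — that $\int(1+|z|)^\gamma\,d\mu=\infty$ when $\gamma\ge 2n$ — I would use the matching lower bound on the invariant density established in Section~\ref{sec:optimality} (the radially sharp lower bound, Theorem~\ref{I-thm:limit_bound}): that section produces constants so that $\rho(z)\gtrsim |z|^{-p}$ along suitable directions at infinity with $p$ tuned precisely so that $\int(1+|z|)^\gamma\rho\,dz$ diverges exactly when $\gamma\ge 2n$. Combining the upper moment bound with this lower bound pins down the threshold at $2n$. The main obstacle is the Lyapunov construction itself — specifically, ensuring the patched function $V$ is a genuine supersolution globally despite the intermediate boundary layers created by the lower-order terms in $F$, where the naive scaling from Part~I breaks down; this is precisely what the inductive asymptotic scheme of Section~\ref{sec:PiecewiseBeh} and the flux calculations of Section~\ref{sec:boundaryFluxCalculations} are designed to handle, and once $V$ is in hand the remaining probabilistic deductions are standard.
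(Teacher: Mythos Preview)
Your proposal is correct and follows essentially the same route as the paper: the Lyapunov pairs of Theorem~\ref{thm:lyapfunmadness} (built via Sections~\ref{sec:PiecewiseBeh}--\ref{sec:CheckingTheGlobalLyapunovBounds} and patched with the generalized Tanaka formula) feed into the standard consequences recorded in Part~I to give non-explosion, existence, uniqueness, ergodicity, smoothness/positivity of $\rho$, and finiteness of moments for $\gamma<2n$, while the divergence for $\gamma\ge 2n$ comes from the density lower bound of Theorem~\ref{thm:limit_bound}. One small sharpening: that lower bound is the uniform estimate $\rho(z)\ge c\,|z|^{-(2n+2)}$ for all large $|z|$ (not merely along selected directions), which after integrating in polar coordinates gives $\int_{|z|\ge K}(1+|z|)^\gamma\rho\,dA \gtrsim \int_K^\infty r^{\gamma-2n-1}\,dr=\infty$ exactly when $\gamma\ge 2n$.
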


In addition to proving Theorem~\ref{thm:decayStMeasure}, we will also characterize the convergence of the process $z_t$ defined by \eqref{eqn:polyZ} to the unique stationary measure $\mu$.  To state this result, for any measurable function $w\colon \C
 \rightarrow [1,\infty)$, let $\mathcal{M}_w(\C)$ denote the set of probability measures $\nu$ on $\C$ satisfying $w\in L^1(\nu)$  
  and define the weighted total variation metric
 $d_w$ on $\mathcal{M}_w(\C)$ by
\begin{align*}
  \d_w(\nu_1,\nu_2)=\sup_{\substack{\phi:\C \rightarrow \R
      \\|\phi(z)|\leq w(z)} }\bigg[\int \phi(z)\, \nu_1(dz)- \int
  \phi(z) \, \nu_2(dz)\bigg].  
\end{align*}

\begin{theorem}\label{thm:convergence} Let $P_t$ denote the Markov
  semi-group corresponding to \eqref{eqn:polyZ} and let $\alpha \in (0, n)$ be arbitrary.   Then there exists a function $\Psi \colon \C
  \rightarrow [0,\infty)$ and positive constants $c, d, K$ such that 
\begin{align*}
  c |z|^\alpha \leq \Psi(z) \leq d |z|^{\alpha + \frac{n}{2}+1}  
\end{align*}
for all $|z| \geq K$ and such that if $w(z)=1+\beta \Psi(z)$ for some $\beta >0$, then ${\nu P_t \in \mathcal{M}_w(\C)}$ for all $t>0$ and any probability measure $\nu$ on $\C$.  Moreover, with the same choice of $w$, there exist positive constants $C, \gamma$ such that for any two probability measures $\nu_1, \nu_2$ on $\C$ and any $t\geq 1$
  \begin{align*}
   d_w(\nu_1 P_t,\nu_2 P_t) \leq C e^{-\gamma t} \| \nu_1-\nu_2\|_{TV}.  
 \end{align*} 
\end{theorem}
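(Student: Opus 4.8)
The plan is to run the standard Harris--Meyn--Tweedie programme, using the Lyapunov function $\Psi=\Psi_\alpha$ produced in Sections~\ref{sec:constructionPsigen}--\ref{sec:CheckingTheGlobalLyapunovBounds}, and then to upgrade the resulting weighted-total-variation contraction to the stated form via the instantaneous smoothing of the semigroup.

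\emph{Step 1: the drift inequality and its consequences.} Fix $\alpha\in(0,n)$ and let $\Psi$ be the function built in Section~\ref{sec:constructionPsigen}, with the two-sided bound $c|z|^\alpha\le\Psi(z)\le d|z|^{\alpha+\frac{n}{2}+1}$ for $|z|\ge K$ recorded in Section~\ref{sec:CheckingTheGlobalLyapunovBounds}. I would then invoke the global Lyapunov estimate proved there: outside a compact set, $\mathcal{L}\Psi\le C-\gamma'\Psi$, where $\mathcal{L}$ is the generator of \eqref{eqn:polyZ}; and, because acting with the leading drift $az^{n+1}$ on $\Psi$ produces a gain of order $|z|^{n}$, this can be sharpened to a strictly superlinear bound $\mathcal{L}\Psi\le C-\gamma'\Psi^{1+\epsilon_0}$ for some $\epsilon_0>0$. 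Since $\Psi$ is only piecewise $C^2$ across the boundaries between the local Lyapunov functions, this inequality is to be used through the generalized It\^o/Tanaka formula of Section~\ref{sec:peskir} together with the flux computations of Section~\ref{sec:boundaryFluxCalculations}, which were arranged precisely so that the boundary local-time terms are nonpositive; hence $\Psi(z_t)-\Psi(z_0)-\int_0^t\bigl(C-\gamma'\Psi(z_s)^{1+\epsilon_0}\bigr)\,ds$ is bounded above by a local martingale. Taking expectations (with the usual localization) and applying Jensen's inequality, $g(t):=\E_x\Psi(z_t)$ obeys $g'(t)\le C-\gamma' g(t)^{1+\epsilon_0}$ in the integral sense, and comparison with the scalar ODE $y'=C-\gamma'y^{1+\epsilon_0}$ yields both $\E_x\Psi(z_t)\le e^{-\gamma' t}\Psi(x)+C'$ and the uniform smoothing bound $h(t):=\sup_{x\in\C}\E_x\Psi(z_t)<\infty$ for every $t>0$ (the comparison solution issued from $+\infty$ is finite at all positive times). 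In particular, with $w=1+\beta\Psi$, one gets $\nu P_t\in\mathcal{M}_w(\C)$ for every probability measure $\nu$ and every $t>0$.

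\emph{Step 2: minorization on sublevel sets.} Because $\Psi\ge c|z|^\alpha$ at infinity, each set $\{\Psi\le R\}$ is compact. Since $\sigma>0$, the equation \eqref{eqn:polyZ} is uniformly elliptic with smooth coefficients and, by Theorem~\ref{thm:decayStMeasure}, non-explosive, so it admits a jointly smooth, strictly positive transition density $p_t(x,y)$ (this is also what underlies the density statement in Theorem~\ref{thm:decayStMeasure}). Hence for each $R$ and each $t_*>0$ there exist $\eta=\eta(R,t_*)>0$ and a probability measure $\nu_*$ with $P_{t_*}(x,\cdot)\ge\eta\,\nu_*$ for all $x\in\{\Psi\le R\}$.

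\emph{Step 3: Harris' theorem and the total-variation upgrade.} Choosing $R$ large (depending on $C/\gamma'$) and $\beta>0$ small, Steps~1--2 are precisely the hypotheses of Harris' ergodic theorem in its contraction form, which provides constants $C_0\ge1$ and $\gamma_0>0$ such that $d_w(\nu_1P_t,\nu_2P_t)\le C_0e^{-\gamma_0 t}d_w(\nu_1,\nu_2)$ for all $t\ge0$ and all probability measures $\nu_1,\nu_2$; uniqueness of the stationary measure is already furnished by Theorem~\ref{thm:decayStMeasure}. To replace $d_w$ by $\|\cdot\|_{TV}$ on the right for $t\ge1$, I would note that for any $\phi$ with $|\phi|\le w$ one has $|P_1\phi(x)|\le P_1w(x)=1+\beta\,\E_x\Psi(z_1)\le 1+\beta h(1)$, so that, writing $\eta=\nu_1-\nu_2$ (a signed measure with $\eta(\C)=0$), $d_w(\nu_1P_1,\nu_2P_1)=\sup_{|\phi|\le w}\eta(P_1\phi)\le\bigl(1+\beta h(1)\bigr)\|\nu_1-\nu_2\|_{TV}$. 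Then for $t\ge1$, using $\nu_iP_t=(\nu_iP_1)P_{t-1}$ and the $d_w$-contraction from time $1$ onward (where all quantities are finite), $d_w(\nu_1P_t,\nu_2P_t)\le C_0e^{\gamma_0}\bigl(1+\beta h(1)\bigr)e^{-\gamma_0 t}\|\nu_1-\nu_2\|_{TV}$, which is the assertion with $\gamma=\gamma_0$.

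\emph{Main obstacle.} Essentially all the substance lies in the inputs to Step~1 and has already been done in Sections~\ref{sec:PiecewiseBeh}--\ref{sec:peskir}: identifying, via the scaling analysis of Section~\ref{sec:PiecewiseBeh}, the correct dominant operators in each intermediate boundary layer, building $\Psi$ from the associated PDEs, and --- the most delicate point --- showing that the flux generated when the local pieces are patched does not destroy the drift, which is controlled by the boundary computations of Section~\ref{sec:boundaryFluxCalculations} and the generalized Tanaka formula of Section~\ref{sec:peskir}. Within the present proof the only point requiring care is the uniform smoothing bound $h(t)<\infty$ for small $t$, and this is immediate from the comparison ODE once the drift is known to be superlinear in $\Psi$ ($\epsilon_0>0$), which the construction delivers. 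The remaining steps are routine.
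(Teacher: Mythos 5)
Your proposal is correct and follows essentially the same route as the paper: the paper proves Theorem~\ref{thm:convergence} by establishing the Lyapunov pairs $(\Psi,\Psi^{1+\delta})$ of Theorem~\ref{thm:lyapfunmadness} (via the construction, flux calculations, and generalized Tanaka formula, exactly the inputs you cite in Step~1) and then invoking the Harris-type consequences recorded in Section~\ref{I-sec:conseq_lyap} of Part I, which are precisely your Steps~2--3 (minorization on compact sublevel sets from ellipticity, Harris contraction, and the total-variation upgrade from the instantaneous regularization $\sup_x\E_x\Psi(z_t)<\infty$ furnished by the superlinear drift). The only presentational difference is that the paper packages the drift condition and the nonpositive boundary flux into the definition of a Lyapunov pair and delegates the ergodic-theoretic machinery to Part I, whereas you spell it out.
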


Most of the results stated above will be established by constructing certain types of Lyapunov functions.  In Part I~\cite{HerzogMattingly2013I} of this work, however, we used a slightly more general formulation of a Lyapunov function than usually employed in existing literature.  Therefore, we now recall what we mean by \emph{Lyapunov pairs} as introduced in Section~\ref{I-sec:conseq_lyap} of Part I~\cite{HerzogMattingly2013I}.

\begin{definition}
\label{def:Lyapunov}
Let $\xi_t$ denote a time-homogeneous It\^{o} diffusion on $\R^k$ with $C^\infty$ coefficients and define stopping times $\tau_n= \inf\{t>0 \,: \, |\xi_t| \geq n\}$ for $n\in \N$.        
Let $\Psi, \Phi: \R^k\rightarrow [0, \infty)$ be continuous. Then we call $(\Psi, \Phi)$
a \textsc{Lyapunov pair corresponding to} $\xi_t$ if:
  \begin{enumerate}
  \item $\Psi(\xi)\wedge \Phi(\xi) \rightarrow \infty$ as
    $|\xi|\rightarrow \infty$;
  \item\label{lyac:b}  There exists a locally bounded and measurable function ${g: \R^k \rightarrow \R}$ such that the following equality holds for all $\xi_0 \in \R^k$, $n\in \N$ and all bounded stopping times $\upsilon$:
  \begin{align*}
   \E_{\xi_0} \Psi(\xi_{\upsilon \wedge \tau_n}) = \Psi(\xi_0) +
   \E_{\xi_0}\int_0^{\upsilon \wedge \tau_n} g(\xi_s) \, ds + \text{Flux}(\xi_0, \upsilon, n)  
 \end{align*}
 where $\text{Flux}(\xi_0, \upsilon, n)\in (-\infty, 0]$ and $\text{Flux}(\xi_0, t, l) \leq \text{Flux}(\xi_0, s, n)$ for all $0\leq s\leq t$, $n\leq l$, $\xi_0 \in \R^k$.    
\item \label{lyac:c}There exist constants $m, b>0$ such that for all $\xi \in \R^k$  
  \begin{align*}
  g(\xi) \leq - m \Phi(\xi) + b.    
  \end{align*} 
    \end{enumerate}
  The function $\Psi$ in a Lyapunov pair $(\Psi, \Phi)$ is called a
  \textsc{Lyapunov function.  }
  \end{definition}
For an explanation of the differences between the usual notion of a Lyapunov function and the notion used here, consult Remark~\ref{I-rem:C2} of Part I~\cite{HerzogMattingly2013I}.

Most of the paper will be spent proving the following result giving the existence of certain types of Lyapunov pairs corresponding to the dynamics \eqref{eqn:polyZ}.

\begin{theorem} \label{thm:lyapfunmadness} For each $\gamma \in (n,
2n)$ and $\delta=\delta_\gamma >0$ sufficiently small, there exist
Lyapunov pairs $(\Psi, \Psi^{1+\delta})$ and $(\Psi, |z|^\gamma)$
corresponding to the dynamics \eqref{eqn:polyZ} such that the bound 
\begin{align*} c|z|^{\gamma-n}\leq \Psi(z) \leq d|z|^{\gamma-n
+\frac{n}{2}+1} \end{align*} is satisfied for all $|z| \geq K$ for some positive constants
$c,d, K$.  \end{theorem}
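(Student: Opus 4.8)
The plan is to construct $\Psi$ region-by-region, guided by the asymptotic scaling analysis of Section~\ref{sec:PiecewiseBeh}, and then patch the pieces together. Fix $\gamma \in (n, 2n)$. The scaling analysis identifies a finite nested family of angular boundary layers in a neighborhood of infinity: an outer region where the leading term $a z^{n+1}$ together with the noise governs the dynamics, a succession of intermediate layers in which each lower-order term of $F$ becomes dominant in the angular direction, and an innermost region surrounding the explosive deterministic trajectories where the noise $\sigma\,dB_t$ dominates. In each region $R_i$ one extracts from the generator of \eqref{eqn:polyZ} an asymptotically dominant second-order operator $\mathcal{L}_i$, and one solves (or constructs a supersolution to) the associated PDE $\mathcal{L}_i \Psi_i \approx -(\text{something comparable to } |z|^{\gamma-n+\frac{n}{2}+1-\text{(order of }\mathcal{L}_i)})$, normalizing so that the radial growth of each $\Psi_i$ lies between $|z|^{\gamma-n}$ and $|z|^{\gamma-n+\frac n2+1}$. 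This is precisely the construction carried out in Section~\ref{sec:constructionPsigen}, and I would simply quote it; the outer and inner pieces are essentially the ones from Part~I, while the intermediate pieces are new and are the reason $\gamma$ is restricted to $(n,2n)$ — outside that range the balance of terms producing the intermediate layers disappears.

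Next I would verify the three conditions of Definition~\ref{def:Lyapunov} for the glued function $\Psi$, built by taking $\Psi_i$ on $R_i$ and interpolating (a partition-of-unity gluing in the overlaps, or simply patching along the region boundaries). Condition~(a) is immediate from the two-sided radial bound $c|z|^{\gamma-n}\le \Psi(z)\le d|z|^{\gamma-n+\frac n2+1}$, which holds by construction and also gives the displayed estimate in the statement. For condition~(b), the crucial point is that $\Psi$ is only piecewise $C^2$: across each region boundary the gradient may jump. Here I would invoke the Peskir-type generalized Tanaka/Itô formula proved in Section~\ref{sec:peskir}; applying it to $\Psi(z_t)$ produces exactly the decomposition in (b), with $g$ the (locally bounded, measurable) pointwise generator $\mathcal{L}\Psi$ computed region-wise, and with $\mathrm{Flux}(z_0,\upsilon,n)$ the accumulated local-time term along the boundaries. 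The content of Section~\ref{sec:boundaryFluxCalculations} is to show these boundary contributions have the right sign — that the inward normal derivative of $\Psi$ jumps in the favorable direction at each interface — so that the flux term is nonpositive and monotone in $\upsilon$ and $n$ as required. For condition~(c), one checks region-by-region that $\mathcal{L}\Psi_i \le -m\,\Phi + b$ with $\Phi = \Psi^{1+\delta}$ (for $\delta=\delta_\gamma$ small) and, separately, with $\Phi=|z|^\gamma$: this is the work of Section~\ref{sec:CheckingTheGlobalLyapunovBounds}, and it is where the restriction $\gamma<2n$ enters the dissipativity estimate (the innermost noise-dominated region can only absorb growth up to order $|z|^{2n}$), while $\gamma>n$ is what makes the intermediate construction necessary and consistent.

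The main obstacle is the verification of the flux sign conditions in condition~(b) across the \emph{intermediate} boundary layers. In Part~I there was a single transition (outer to noise-dominated), but here one must track the angular matching of the successive $\Psi_i$'s and show that at every interface between consecutive layers the jump in the normal derivative of $\Psi$ points inward. This requires understanding, for each intermediate term of $F$, the geometry of the region where it dominates — in particular the location and shape of the "petals" around the explosive deterministic trajectories — with enough precision to sign a derivative jump; the nested structure means an error in one layer propagates. A secondary difficulty is purely bookkeeping: ensuring the radial exponents of all the $\Psi_i$ stay inside the single corridor $[\gamma-n,\ \gamma-n+\frac n2+1]$ simultaneously, which constrains the normalization constants in the PDE construction and must be done uniformly across all (finitely many) regions. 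Once the flux signs and the corridor bounds are in hand, conditions (a) and (c) and the passage from the local PDE solutions to the global statement are routine given the machinery of Part~I.
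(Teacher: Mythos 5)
Your proposal follows essentially the same route as the paper: the region decomposition and asymptotic operators of Section~\ref{sec:PiecewiseBeh}, the region-wise PDE construction of the $\psi_i$ in Section~\ref{sec:constructionPsigen}, the sign of the normal-derivative jumps at the interfaces (Section~\ref{sec:boundaryFluxCalculations}) combined with the Peskir-type formula of Section~\ref{sec:peskir} to obtain condition~(\ref{lyac:b}) of Definition~\ref{def:Lyapunov}, and the region-by-region dissipativity estimates of Section~\ref{sec:CheckingTheGlobalLyapunovBounds} for condition~(\ref{lyac:c}). You also correctly single out the two genuine difficulties the paper spends most of its effort on, namely signing the flux across the intermediate layers (which in the paper additionally requires the symmetrization of $\psi_{j+3}$ via Lemma~\ref{lem:h_choice}) and the uniform bookkeeping of the exponents $p_{l,m}$, $q_{l,m}$.
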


By the results of Section~\ref{I-sec:conseq_lyap} of Part I
\cite{HerzogMattingly2013I}, Theorem \ref{thm:lyapfunmadness} implies
almost all of the main results.  In particular, all consequences of
Theorem~\ref{thm:decayStMeasure} and Theorem~\ref{thm:convergence} follow except \begin{align} \label{eqn:lowerboundinvm}
  \int_\C (1+|z|)^\gamma \, d\mu(z) = \infty \, \, \text{ if } \,\,
  \gamma \geq 2n. \end{align}
  To prove \eqref{eqn:lowerboundinvm}, we will show the following stronger result.

\begin{theorem} \label{thm:limit_bound} Let $\rho(x,y)$ denote the
invariant probability density function of \eqref{eqn:polyZ} with
respect to Lebesgue measure on $\R^2$.  Then
there exist positive constants $c, K$ such
that \begin{align} \label{eqn:limit_bound} |(x,y)|^{2n+2} \rho(x, y)
\geq c\,\, \text{ for } \, \, |(x,y)|\geq K  \end{align} where $|(x, y)|=\sqrt{x^2 +y^2}$ denotes the standard Euclidean distance on $\R^2$. \end{theorem}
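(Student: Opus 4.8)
\emph{Proof proposal.} Because the noise in \eqref{eqn:polyZ} is non-degenerate, the invariant density $\rho$ solves the stationary Fokker--Planck equation $L^*\rho=0$ on $\R^2$, where $L^*u=\tfrac{\sigma^2}{2}\Delta u-\nabla\cdot(bu)$ and $b$ is the drift of \eqref{eqn:polyZ} viewed as a vector field; its principal part is the constant-coefficient, uniformly elliptic operator $\tfrac{\sigma^2}{2}\Delta$, so $\rho$ is smooth and strictly positive. The plan is to build a positive function $\underline\rho$ on an exterior region $\{|z|>K\}$ with $\underline\rho(z)\asymp|z|^{-2n-2}$ and $L^*\underline\rho\ge 0$ there, and then to deduce $\rho\ge c\,\underline\rho$ by a comparison argument; this is precisely \eqref{eqn:limit_bound}.

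The exponent $2n+2$ is forced by the structure of the equation. For the leading part of the drift, the vector field $z\mapsto az^{n+1}$, one has the algebraic identity $az^{n+1}\,|z|^{-2n-2}=a\,\bar z^{-(n+1)}$, and this last field is anti-holomorphic, hence divergence free; thus $\nabla\cdot\big(az^{n+1}\,|z|^{-2n-2}\big)=0$ on $\C\setminus\{0\}$. Consequently the dominant drift contributes nothing to $L^*(|z|^{-2n-2})$, and one computes $L^*(|z|^{-2n-2})=\mathcal O(|z|^{-n-3})$, with the remainder coming only from the lower-order drift and the Laplacian. This remainder is, however, not sign-definite --- it carries the oscillating factor $\cos(n\arg z+\arg a)$ --- so $|z|^{-2n-2}$ is only a near-subsolution. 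I would correct it by a truncated asymptotic expansion
\begin{align*}
  \underline\rho(z)=|z|^{-2n-2}\Big(1+\sum_{k=1}^{N}|z|^{-k}\phi_k(\arg z)\Big)-\varepsilon\,\psi(z),
\end{align*}
where each $\phi_k$ solves a linear transport equation along the flow lines of $az^{n+1}$ --- equivalently, along the finite-time blow-up trajectories of the $\sigma=0$ dynamics --- the defining integrals converging because those trajectories reach infinity with $|z|\sim(t^*-t)^{-1/n}$, and $\psi>0$ is a small buffer, built from similar transport solves, which renders $L^*\underline\rho\ge 0$ with room to spare. The single delicate point is the neighbourhood of the separatrix angles $\cos(n\arg z+\arg a)=0$, where $az^{n+1}$ is tangent to circles and both the expansion and the transport solves degenerate; there I would invoke the boundary-layer analysis and local patching already developed for the Lyapunov functions in Sections~\ref{sec:PiecewiseBeh}--\ref{sec:constructionPsigen}, assembling $\underline\rho$ from local subsolutions matched across these small regions.

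For the comparison step it is cleanest to divide by $\rho$ itself. Setting $u:=1-c\,\underline\rho/\rho$, so that $\rho-c\,\underline\rho=\rho\,u$, the identity $L^*\rho=0$ gives $\rho\big(\tfrac{\sigma^2}{2}\Delta u+\tilde b\cdot\nabla u\big)=L^*(\rho u)=-c\,L^*\underline\rho\le 0$ on $\{|z|>K\}$, where $\tilde b=\sigma^2\nabla\log\rho-b$; dividing by $\rho>0$ leaves an operator with \emph{no} zeroth-order term, so the minimum principle applies. One has $u\ge 0$ on $\{|z|=K\}$ once $c$ is small (there $\rho$ is bounded below and $\underline\rho$ is bounded), and it remains only to keep the infimum of $u$ from escaping to infinity. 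I would first record a crude polynomial lower bound $\rho(z)\ge c_0|z|^{-M}$ --- from a non-sharp subsolution, or a Harnack chain --- so that $u$ decays at most polynomially, and then apply a Phragm\'en--Lindel\"of estimate for $\tfrac{\sigma^2}{2}\Delta+\tilde b\cdot\nabla$ at that growth, using $\int(1+|z|)^\gamma\,d\mu<\infty$ for $\gamma<2n$ (Theorem~\ref{thm:decayStMeasure}) and interior elliptic bounds to control $\nabla\log\rho$. This yields $u\ge 0$, i.e. $\rho(z)\ge c\,\underline\rho(z)\ge c'|z|^{-2n-2}$ for $|z|\ge K'$.

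The main obstacle is the subsolution construction across the separatrices $\cos(n\arg z+\arg a)=0$: the naive expansion breaks down there exactly as the Lyapunov construction does, so one must import its full boundary-layer machinery --- this is where the intermediate lower-order terms of \eqref{eqn:polyZ} re-enter. A secondary obstacle is making the at-infinity comparison rigorous without circular reasoning about the decay of $\rho$, which the crude polynomial lower bound together with a Phragm\'en--Lindel\"of input is meant to resolve. I note finally an alternative probabilistic route that avoids the comparison principle entirely: from $\rho(y)=\int p_1(x,y)\,\mu(dx)\ge\mu(B_1)\inf_{x\in B_1}p_1(x,y)$ one lower-bounds $p_1(x,y)$ for $|y|$ large by steering the process onto a blow-up trajectory at $\mathcal O(1)$ control cost and then riding it out to $y$; the resulting density lower bound is of the order of the reciprocal of the Jacobian determinant of the associated time-one flow map, and one shows that Jacobian is $\lesssim|y|^{2n+2}$ using the angular structure of the blow-up trajectories. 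That route trades the Phragm\'en--Lindel\"of difficulty for a delicate Gaussian-type lower bound along a control tube that the expanding flow is stretching.
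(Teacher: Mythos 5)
Your identification of why the exponent $2n+2$ is forced is exactly right and matches the paper's motivation (the paper phrases it as the substitution $w=1/z^n$ making the drift bounded and nonvanishing at $w=0$, which is equivalent to your observation that $az^{n+1}|z|^{-2n-2}=a\bar z^{-(n+1)}$ is divergence-free). But your overall route --- build an explicit exterior subsolution $\underline\rho\asymp|z|^{-2n-2}$ of $\mathscr{L}^*\underline\rho\ge0$ and then run a comparison/Phragm\'en--Lindel\"of argument --- defers rather than resolves the two hard points. First, the subsolution: since $\mathscr{L}^*(|z|^{-2n-2})=\mathcal O(|z|^{-n-3})$ while $|z|^{-2n-2}$ itself is much smaller, the error is \emph{large relative to the candidate}, so the correction terms $\phi_k$ are essential and their transport solves degenerate precisely at the separatrices; you propose to ``import'' the boundary-layer machinery of Sections~\ref{sec:PiecewiseBeh}--\ref{sec:constructionPsigen}, but that machinery produces supersolution-type Lyapunov pairs for the \emph{forward} generator with prescribed polynomial growth, not sign-definite subsolutions of the \emph{adjoint} at the critical decay rate $|z|^{-2n-2}$; carrying it over is not automatic and is not sketched. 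Second, the comparison at infinity: your operator $\tfrac{\sigma^2}{2}\Delta+\tilde b\cdot\nabla$ has drift $\tilde b=\sigma^2\nabla\log\rho-b$, where $b$ grows like $|z|^{n+1}$ and $\nabla\log\rho$ has no a priori bound; a Phragm\'en--Lindel\"of principle for such an operator needs barriers tending to $+\infty$, which is essentially the same construction problem again, and the ``crude polynomial lower bound $\rho\ge c_0|z|^{-M}$'' you invoke to control the growth of $u$ is itself unproven. So as written the argument is circular at exactly the point you flag as ``secondary.''

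The paper's proof of Theorem~\ref{thm:limit_bound} (Section~\ref{sec:optimality}) sidesteps both issues by replacing the comparison principle with a probabilistic representation. Setting $c(z,\bar z)=|z|^{2n+2}\rho(z,\bar z)$, the stationary equation $\mathscr{L}^*\rho=0$ becomes a Schr\"odinger-type equation $|z|^{-n-1}\mathcal Mc=0$ whose second-order part is the generator of an auxiliary time-changed diffusion $z_t^*$ with \emph{reversed} leading drift, and whose potential $|z|^{-(n-1)}f$ is bounded outside a disk. Feynman--Kac then gives $c(z,\bar z)\ge C(\gamma)e^{-tD(\gamma)}\PP_z[S_\gamma\le t]$, where $S_\gamma$ is the hitting time of $\{|z|\le\gamma\}$, and the whole burden falls on Corollary~\ref{cor:Lstard}: a uniform-in-initial-condition bound $\inf_z\PP_z[S_\gamma\le t]\ge1-\epsilon$. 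That corollary is proved by applying the already-established Lyapunov construction of Theorem~\ref{thm:lyapfunmadness} to the rotated process $az_t^*$ (with $a^n=-1$), whose generator is $rL$ with $L$ of the form treated in the paper. In other words, the Lyapunov machinery is reused, but as a hitting-time estimate for an auxiliary ``inverted'' diffusion rather than as raw material for an adjoint subsolution --- this is what eliminates any need for a priori lower bounds on $\rho$ or growth control of $\nabla\log\rho$. Your alternative control-theoretic route via $\rho(y)\ge\mu(B_1)\inf_x p_1(x,y)$ is closer in spirit to a genuine alternative, but the Jacobian/Gaussian-tube estimate along an expanding blow-up trajectory is again only asserted.
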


Throughout, we will assume that the reader
is familiar with Section~\ref{I-sec:GenOver} of Part I \cite{HerzogMattingly2013I} which gives the general outline
of the construction procedure used to produce Lyapunov pairs.  These Lyapunov pairs will be constructed using this procedure in Sections~\ref{sec:PiecewiseBeh}-\ref{sec:CheckingTheGlobalLyapunovBounds}, thus proving Theorem~\ref{thm:lyapfunmadness}.  In Section \ref{sec:optimality}, we change our focus from constructing Lyapunov pairs to proving Theorem
\ref{thm:limit_bound}.  Section \ref{sec:gen_ito} contains the proof
of a version of Peskir's result \cite{Peskir_07}.

\begin{remark}
\label{rem:ae1}
Throughout the proofs of the main results, we will assume without loss of generality that $a=1$ in equation \eqref{eqn:polyZ}.  Indeed one can get from either system to the other by multiplying the solution by a non-zero complex constant and using the fact that $e^{i\theta}B_t$, $\theta \in \R$, is also a complex Brownian motion.  
\end{remark}

\section{The Asymptotic operators and their associated regions
} \label{sec:PiecewiseBeh}

As in Part I \cite{HerzogMattingly2013I}, we will identify the
asymptotically dominant terms in equation \eqref{eqn:polyZ} at
infinity by analyzing the time-changed Markov generator $L$ of the process $z_t$ as
$r=|z|\rightarrow \infty$.  Because doing this is substantially more involved than in Part I~\cite{HerzogMattingly2013I}, we have provided a summary of the analysis that follows in Section~\ref{subsec:ingeneral} and added Figure~\ref{fig1} to help illustrate the regions and the corresponding deterministic flow. 

By Remark~\ref{rem:ae1}, we may assume without loss of generality that $a=1$ in equation \eqref{eqn:polyZ} throughout the analysis.  Hence, after making the time change $t \mapsto \tau
= \int_0^t |z_s|^n ds$, the time-changed generator $L$ has
the following form when written in polar coordinates $(r, \theta)$:
\begin{align} 
  \label{eqn:genL} 
  L = r \cos(n\theta)\partial_r + \sin(n\theta) \partial_{\theta} +
  P(r, \theta)\partial_r + Q(r, \theta) \partial_{\theta} +
  \frac{\sigma^2}{2 r^n }\partial_{r}^2 +
  \frac{\sigma^2}{2r^{n+2}}\partial_{\theta}^2
\end{align}
where 
   \begin{align}\label{eqn:PQ} 
    P(r, \theta) = \sum_{k=0}^{n+2} r^{k-n-2} f_k(\theta)  \,\,\,\, \text{ and } \,\,\,\, 
    Q(r, \theta)  = \sum_{k=0}^{n+1} r^{k-n-2} g_k(\theta) 
  \end{align}
for some collection of smooth real-valued functions $f_k$, $g_k$ which are $2\pi$-periodic.   
As we recall, the inclusion of the $k=0$ terms is not needed to encapsulate all terms in the generator of the process \eqref{eqn:polyZ}.  However, their presence is required to deal with a secondary calculation needed in the proof of Theorem~\ref{thm:limit_bound}.

As in Section~\ref{I-subsec:Bas_structI} and
Section~\ref{I-sec:reductions} of \cite{HerzogMattingly2013I}, we
will build our Lyapunov function for all $(r,\theta)$ by restricting analysis of $L$ to the principal wedge 
\begin{equation*}
  \mathcal{R}=\{(r, \theta) \, : \, r\geq r^*, \, -\tfrac{\pi}{n}\leq \theta \leq \tfrac{\pi}{n} \}.  
\end{equation*}
In \cite{HerzogMattingly2013I}, we recall that to do this construction, we divided $\mathcal{R}$ into four regions: a ``priming'' region $\mathcal{S}_0$ to initialize
the construction, a transport region $\mathcal{S}_1$, a
transition region $\mathcal{S}_2$ to blend between the transport region $\mathcal{S}_1$ and a region $\mathcal{S}_3$ where the noise still plays a role at infinity.  Moreover, this division of the principal wedge $\mathcal{R}$ was implied by the asymptotic analysis of $L$ carried out in Section \ref{I-sec:PiecewiseI} of \cite{HerzogMattingly2013I}.  Here, too, we will see that a division of $\mathcal{R}$ holds in the general case, but this time there are many more regions.  Initially, the analysis in the general case will exactly coincide with the analysis done previously.  Specifically, the first two regions, $\mathcal{S}_0$ and $\mathcal{S}_1$, will be of the same form as before.  Afterwards, however, the dynamics at infinity undergoes further incremental changes, and this results in a significant increase in the number of regions and, consequently, the number of asymptotic operators.

As in  Section~\ref{I-sec:PiecewiseI} of \cite{HerzogMattingly2013I},  
we introduce the family of scaling transformations 
\begin{align*} 
  S_\alpha^\lambda : (r, \theta) \mapsto (\lambda r, \lambda^\alpha 
  \theta) 
\end{align*} 
where $\lambda >0$ and $\alpha \geq 0$.  This is done to facilitate the identification of the dominant balances in $L$ as $r\rightarrow \infty$ with $(r, \theta)
\in \mathcal{R}$.  Operationally, we study the scaling properties  of
\begin{multline*}
L\circ S_\alpha^\lambda(r, \theta)= r \cos(n\theta  \lambda^{-\alpha}
) \partial_r  +  \lambda^{\alpha}  \sin(n\theta  \lambda^{-\alpha}
) \partial_{\theta} + \lambda^{-2-n} \frac{\sigma^2}{2 r^n
}\partial_{r}^2 \\+
\lambda^{2\alpha-n-2}\frac{\sigma^2}{2r^{n+2}}\partial_{\theta}^2  +
\lambda^{-1} P(\lambda r, \lambda^{-\alpha} \theta)\partial_r +
\lambda^\alpha Q(\lambda r, \lambda^{-\alpha}\theta) \partial_{\theta}.   
\end{multline*} 
as $\lambda\rightarrow \infty$ for different choices of $\alpha \geq  0$.

As done in Section \ref{I-sec:PiecewiseI} of Part I \cite{HerzogMattingly2013I}, we begin by studying $L$ as
$r\rightarrow \infty$, $(r, \theta) \in \mathcal{R}$, in regions where
$|\theta|$ is bounded away from zero; that is, we first consider
$L\circ S_0^\lambda$ as $\lambda \rightarrow \infty$. Observing that 
\begin{align*} 
  L\circ S_0^\lambda = r\cos(n\theta) \partial_r +
  \sin(n\theta) \partial_\theta + O(\lambda^{-1}) \text{ as } \lambda
  \rightarrow \infty,
\end{align*}
we still expect 
\begin{align} 
  T_1 = r\cos(n\theta) \partial_r + \sin(n\theta) \partial_\theta
\end{align}
to satisfy $L \approx T_1$ for $r\gg 0$ with $(r, \theta)$ restricted
to a region $\mathcal{S}_1$ of the form 
\begin{align} 
  \mathcal{S}_1 = \{(r, \theta) \in \mathcal{R} \, : \,
  0<\theta_1^*\leq |\theta| \leq \theta_0^*\leq \tfrac{\pi}{n}\}
\end{align} 
where $\theta_0^*, \theta_1^*$ are any positive constants.

To see what happens in the remainder of $\mathcal{R}$, we now turn to
analyzing $L\circ S_\alpha^\lambda$ as $\lambda \rightarrow \infty$
for $\alpha>0$ fixed. By fixing the constant $\theta_1^*>0$ from the
definition of $\mathcal{S}_1$ above to be sufficiently small, it is
reasonable to assume that the dominant behavior of $L\circ
S_\alpha^\lambda$ in $\lambda$ can be discovered in
$\mathcal{R}\setminus \mathcal{S}_1$ by considering the power series
expansion of the coefficients of $L$. Fixing a $J > \frac{n}{2}
+6$, we write
\begin{equation*}
   L = r\partial_r + n\theta \partial_{\theta}+
  \P(r, \theta)\partial_r + \Q(r, \theta) \partial_{\theta} +
  \frac{\sigma^2}{2 r^n }\partial_{r}^2 +
  \frac{\sigma^2}{2r^{n+2}}\partial_{\theta}^2
\end{equation*}
with 
\begin{equation}
\label{eqn:exprPQ}
\begin{aligned}
  \P(r, \theta) &=  \sum_{i=1}^{J-1} \alpha_{i}r\theta^i +
  \sum_{i = 0}^{n+1} \sum_{j=0}^{J-1} \beta_{ij} r^{-i} \theta^j + R_\P (r,\theta)\\
 \Q(r, \theta) &= \sum_{i=1}^{n+1} \gamma_i r^{-i} +
\sum_{i=2}^{J-1} \delta_{i} \theta^i + \sum_{i=1}^{n+2}\sum_{ j =1}^{J-1}
\epsilon_{ij} r^{-i} \theta^j + R_\Q(r,\theta)
\end{aligned}
\end{equation}
where $\alpha_i, \beta_{ij}, \gamma_i, \delta_i, \epsilon_{ij}$ are
constants and the remainder functions $R_\P$
and $R_\Q$ satisfy
\begin{align}
&|R_\P(r,\theta)| \leq C_\P (r+1) |\theta|^{J}, \quad |R_\Q(r,\theta)| \leq C_\Q |\theta|^{J}, \quad  J> \frac{n}{2}+6,  
\end{align}
for some positive constants $C_\P, C_\Q$. We have switched from $P$
and $Q$ to $\P$ and $\Q$ because, in $\P$ and $\Q$, we include higher
order terms from the power series expansion of $r\cos(n \theta)$ and
$\sin(n \theta)$, respectively.

We begin by considering the region just next to $\mathcal{S}_1$; that
is, we analyze $L\circ S_\alpha^\lambda$ as $\lambda \rightarrow
\infty$ when $\alpha >0$ is fixed and small.  Looking at $L\circ
S_\alpha^\lambda$ for $\lambda >0$ large, the following four terms are
candidates for any dominant balance of $L$ as $r\rightarrow \infty$:
\begin{align}
   r \partial_r + n \theta \partial_\theta +\sum_{i=1}^{\lfloor \frac{n}2 \rfloor +1 }
  \lambda^{\alpha-i}\gamma_i r^{-i} \partial_\theta + \lambda^{2\alpha
    - (n+2)} \frac{\sigma^2}{2 r^{n+2}} \partial_{\theta}^2= (I)+(II)+\sum_{i=1}^{\lfloor \frac{n}2 \rfloor +1 }(III_i)+(IV) \label{eq:I-II-III-IV}
\end{align}
where $\lfloor x \rfloor$ is the greatest integer less than or equal
to $x$.  Note that we have neglected the
$\delta_i\theta^i \partial_\theta$, $i\geq 2$, terms since for
$|\theta|$ small they are dominated by $n \theta \partial_\theta$.
Similarly, we have neglected all of the $\epsilon_{ij} r^{-i}\theta^j \partial_\theta$
terms since for $\theta$ small the corresponding $(III_i)=\gamma_i
r^{-i} \partial_\theta$ term always dominates it. We have also
neglected all of the $\alpha_{i}r\theta^i\partial_r$ and $\beta_{ij}
r^{-i} \theta^j \partial_r$ terms since they are always dominated by
the $r \partial_r$ term for $r$ large and $\theta$ small.  The terms
$(III_i)$ must be included since there is always a region, dictated by
the value of $\alpha >0$, where $\theta$ is small enough so that $(II)$
is dominated by some collection of the $(III_i)$ as $r\rightarrow
\infty$.

It is also important to realize why we have truncated the sum $\sum
(III_i)$ at $i=\lfloor \frac{n}2 \rfloor +1$.  Comparing the diffusion
term $(IV)$ with the terms $(III_i)$, observe that we need only
consider indices $i$ of $(III_i)$ satisfying $\alpha - i \geq 2 \alpha
-(n+2)$.  Rearranging this conditions produces the restriction $i \leq
n+2 -\alpha$. To obtain the claimed condition $i\leq \lfloor \frac{n}2
\rfloor+1$, we must first understand the relevant range of
$\alpha$. When $2\alpha -(n+2) =0$ the term $(II)$ balances the term
$(IV)$. Solving this condition to find $\alpha = \frac{n}2 +1$ and
substituting this value of $\alpha$ into $i \leq n+2 -\alpha $, we
obtain the claimed bound $i\leq \lfloor \frac{n}2 \rfloor +1$.

Assumption~\ref{I-assumption:nlot} from \cite{HerzogMattingly2013I}
translated to this context implies that $\gamma_i=0$ for $i \in
\{1,\dots \lfloor \frac{n}2\rfloor +1\}$; and hence in the case
considered previously, none of the $(III_i)$ terms we have retained in
\eqref{eq:I-II-III-IV} are present.  To further illustrate the
differences encountered here, we now analyze $L\circ S_\alpha^\lambda$
as $\lambda \rightarrow \infty$ for $\alpha >0$ fixed in the relevant
range $(0, \lfloor\frac{n}2\rfloor +1]$.

For $0<\alpha<1$, observe that 
\begin{align*}
(I)+(II) \gg \sum_{i=1}^{\lfloor \frac{n}2 \rfloor+1} (III_i)+(IV)\,\, \text{ as }\,\, \lambda \rightarrow \infty.
\end{align*}
When $\alpha=1$, however, as  $\lambda \rightarrow \infty$ we see that 
\begin{equation}\label{balanceIII_1}
(I)+(II)+(III)_1 \gg \sum_{i=2}^{\lfloor n/2\rfloor +1} (III_i) + (IV) \,.
\end{equation} 
Hence we expect
\begin{align}
T_2 = r\partial_r +  n\theta \partial_\theta
\end{align}
to satisfy $L \approx T_2$ as $r\rightarrow \infty$ when all paths to
infinity are restricted to a region of the form
\begin{align*}
\{(r, \theta) \in \mathcal{R}\, : \, r\geq r^*, \, b(r)\leq |\theta| \leq \theta_1^*  \}
\end{align*}
where $\theta_1^* >0$ is small enough and 
\begin{align}\label{bBoundaryRestriction}
b(r)= c r^{-1}+o(r^{-1})\,\,\text{ as }\,\, r\rightarrow \infty
\end{align}
for some large constant $c>0$.  Note that $c>0$ is chosen to be large
to assure that the term $(III)_1$ is not also dominant in the region
defined above.  We also leave open the choice of a specific curve $b$
because what will happen in the remaining part of $\mathcal{R}$:
\begin{align*}
\{(r, \theta) \in \mathcal{R}\, : \, r\geq r^*, \, |\theta| \leq b(r)  \}
\end{align*}     
will suggest its definition.

When $\alpha=1$, $(III)_1$ also becomes dominant suggesting that
\begin{equation}\label{eq:balanceOP-1}
r \partial_r + n\theta \partial_\theta + \gamma_1 r^{-1} \partial_\theta
\end{equation}
should be the asymptotic operator in the next region.  However if
$\gamma_1 \neq 0$, then on the curve
$n\theta = -\gamma_1 r^{-1}$
\begin{align*}
n\theta \partial_\theta + \gamma_1 r^{-1} \partial_\theta = 0.  
\end{align*} 
and all of the $\partial_\theta$ terms in \eqref{eq:balanceOP-1}
vanish. Hence we must turn to the terms neglected above and do a further analysis of $L\circ S_\alpha^\lambda$ as $\lambda \rightarrow \infty$ to find the dominant $\partial_\theta$
term. In fact, it is likely that the dominant balance expressed in
\eqref{eq:balanceOP-1} will fail to hold before the terms above exactly cancel.

To help see which terms need to be included in a neighborhood of the
curve defined by $n\theta = -\gamma_1 r^{-1}$, we make a convenient
change of coordinates. The basic idea is to remove the term $(III)_1$
by means of a coordinate transformation, returning us to a setting
like that considered above when $\alpha \in (0,1)$.  Introducing the
mapping $(r, \theta) \mapsto (r, \phi_3)$ where $\phi_3$ is defined by
\begin{equation}
\phi_3 =r \theta + \frac{\gamma_1}{n+1}, 
\end{equation}
we see that the operator $L$
transforms to
\begin{align}
\label{Lc3}
L_{(r, \phi_3)} = r \,\partial_{r} + (n+1)
\phi_3\, \partial_{\phi_3} + \P_3 \,\partial_{r} +
\Q_3\, \partial_{\phi_3} + \frac{\sigma^2}{2
  r^n} \partial_{\phi_3}^2 +
\Big(\frac{\sigma^2}{2r^n}\partial_r^2 \Big)_{(r, \phi_3)}
\end{align}
where
\begin{equation}
\label{eqn:PQ3}
\begin{aligned}
  \P_3(r,\phi_3) &=\sum_{i=0}^{n+J}\sum_{ j=0}^{J-1} \alpha_{ij}^{(3)} r^{-i} \phi_3^j + R_{P_3}\\
  \Q_3(r,\phi_3) & = \sum_{i =1}^{n+J+1} \gamma_i^{(3)} r^{-i} +
  \sum_{i=1}^{n+J+1}\sum_{j= 1}^{J}\beta_{ij}^{(3)} r^{-i} \phi_3^j +
  R_{Q_3},
\end{aligned}
\end{equation}
$\alpha_{ij}^{(3)}, \gamma_i^{(3)}, \beta_{ij}^{(3)}$ are constants and, because $|\theta| \leq \theta_1^* \leq C$, the
remainders $R_{P_3}$, $R_{Q_3}$ satisfy
\begin{align*}
  |R_{P_3}(r,\phi_3)| &\leq C_{P_3} (r +1) [|r^{-1} \phi_3|^J + r^{- J}]\\
  |R_{Q_3}(r,\phi_3)|&\leq C_{Q_3} (r +1)[|r^{-1} \phi_3|^{J} + r^{-J}].
\end{align*}
We have chosen not to write out the term
$\tfrac{\sigma^2}{2r}\partial_r^2$ in \eqref{Lc3} in the variables
$(r, \phi_3)$ because it is too long of an expression and since it
is always dominated, by considering the appropriate scaling
transformation, by the other terms in $L_{(r, \phi_3)}$ as $r
\rightarrow \infty$.

After this change of variables, note that $(II)+(III)_1$ has
transformed into $(n+1)\phi_3 \partial_{\phi_3}$, hence we have
``removed" $(III)_1$.  However, note that a new $\gamma_1^{(3)}
r^{-1}$ term is generated, playing the same role as $(III)_1$ did in
the previous coordinate system.  While this may not seem like
progress, notice that angular diffusion term (analogous to $(IV)$) now
has a coefficient $r^{-n}$ where it was $r^{-n-2}$ in the old
coordinates. Hence this term is more powerful.  By a similar line of
reasoning to the above, only the terms analogous to $(III_i)$ with $i
\in \{1,\ldots, \lfloor \frac{n}2 \rfloor\}$ are not dominated by the
noise. This is one less than previously. Therefore by performing such
substitutions iteratively, we will be able to remove enough terms so
that in the final coordinate system, the angular diffusion term will
dominate all analogous terms to the $(III_i)$'s . An important point
which makes this iteration possible is that $\P_3$ and $\Q_3$ have
that same forms as $\P$ and $\Q$, respectively, in that the lower limits
of the sums do not change. Even though the upper limits of the sums will increase, these added contributions are of lower order so they do not change the analysis. 

To finish the analysis in the variables $(r, \phi_3)$, we need to complete our
understanding of the boundary $|\theta|=b(r)$, extract the dominant operator
which replaces $T_2$ after we cross this boundary, and determine the
lower limit of the region where this new operator remains dominant.

To do this, we again consider $L_{(r, \phi_3)}$ under the scaling transformation
$S_\alpha^\lambda(r, \phi_3):= (\lambda r, \lambda^{-\alpha}
\phi_3)$.  First, we note that when $\alpha=0$
\begin{align*}
  L_{(r, \phi_3)}\circ S_0^\lambda = r \partial_{r} + (n+1)
  \phi_3 \partial_{\phi_3} + o(1)\,\, \text{ as }\,\, \lambda
  \rightarrow \infty
\end{align*}   
implying that 
\begin{align}
T_3 = r \partial_{r} + (n+1) \phi_3 \partial_{\phi_3}
\end{align}
satisfies $L_{(r, \phi_3)}\approx T_3$ as $r\rightarrow \infty$
when paths to infinity are restricted to a region where $|\phi_3|$ is
bounded and bounded away from zero. Thus we choose the second
region to be
\begin{align}
  \mathcal{S}_2 = \{(r, \theta) \in \mathcal{R}\, : \, r\geq r^*, \,
  |\phi_3| \geq \phi^*,\, |\theta| \leq \theta_1^* \}.
\end{align}
Notice that this choice of boundary $|\phi_3| =\phi^*$  is consistent the previous
requirement on the boundary function $b(r)$ in the $(r,\theta)$
variables given in \eqref{bBoundaryRestriction} provided $\phi^* > \gamma_1/(n+1)$.  In a subset of the region $|\phi_3| < \phi^*$, the approximation $L_{(r, \phi_3)}\approx T_3$  holds as $r\rightarrow \infty$. To discover the boundary of this region, we now study $L_{(r, \phi_3)}\circ
S_\alpha^\lambda $ for $\alpha>0$.

As before in the previous coordinate system, there are four terms
which are potentially involved in any dominant balance  of the terms in $L_{(r, \phi_3)}\circ S_\alpha^\lambda $
as $\lambda\rightarrow \infty$:
\begin{align*}
  r \partial_{r} + (n+1)\phi_3 \partial_{\phi_3} +
  \lambda^{\alpha-1}\gamma_1^{(3)} r^{-1} \partial_{\phi_3} +
  \lambda^{2\alpha - n} \frac{\sigma^2}{2 r^n} \partial_{\phi_3}^2 = (I)_{3}+(II)_3+(III)_3+(IV)_3.
\end{align*}  
Notice that the terms $(I)_3, (II)_3, (III)_3, (IV)_3$ are completely
analogous the terms $(I),(II),(III)_1, (IV)$ from the preceding discussion.

As already noted, after the change to the $(r,\phi_3)$ coordinates,
only terms  $\gamma_i^{(3)} r^{-i}$ with $i \in \{1,\ldots, \lfloor \frac{n}2 \rfloor\}$ could
possibly dominate or balance the angular diffusion term $(IV)_3$, and the
term $(III)_3$ is the leading order term of this form.   Hence when
$n=1$ or $n=2$, there is only one such dominant term of this form, namely  $(III)_3$, and it is of either the same (case $n=2$) or lesser (case $n=1$) order as $(IV)_3$. In general $(n\geq 3$), we will have to perform additional transformations to remove all
of the possibly dominant terms.  Before considering general $n\geq 3$, we pause to
finish the analysis in the cases when $n=1$ and $n=2$.

\subsubsection{Remaining operators and regions when  $n=1$}

For every $\alpha\geq 0$, we see that 
\begin{align*}
(I)_3+(II)_3 +(IV)_3 \gg (III)_3 \, \, \, \text{as} \, \, \, \lambda \rightarrow \infty.  
\end{align*}
If $0\leq \alpha < \frac{1}{2}$, then  
\begin{align*}
(I)_3 + (II)_3 \gg (IV)_3\, \, \, \text{as} \, \, \, \lambda \rightarrow \infty.  
\end{align*}
When $\alpha=1/2$, the term $(IV)_3$ also becomes dominant in
$\lambda$.  In particular, the region where we expect $ L_{(r,
  \phi_3)}\approx T_3$ as $r \rightarrow \infty$ is precisely
\begin{align}
\mathcal{S}_{3} = \{(r, \theta)\in \mathcal{R} \, :\, r\geq r^*,
\,\eta^*  r^{-\frac{1}{2}} \leq |\phi_3| \leq \phi^*, \, |\theta|
\leq \theta_1^*   \}, 
\end{align} 
for some $\eta^*>0$.  Additionally, the operator
\begin{align}
A = r \partial_{r} + 2  \phi_3 \partial_{\phi_3} + \frac{\sigma^2}{2 r} \partial_{\phi_3}^2
\end{align} 
contains the dominant part of $L_{(r, \phi_3)}$ in the region
\begin{align}
\mathcal{S}_{4}= \{(r, \theta)\in \mathcal{R} \, : \, r \geq r^*, \,
|\phi_3|\leq \min( \eta^*   r^{-1/2},\phi^*), \, |\theta|\leq
\theta_1^* \}. 
\end{align}
Summing this up, we have seen that when $n=1$, the approximating
operators are $T_1, T_2, T_3, A$ with corresponding regions
$\mathcal{S}_1, \mathcal{S}_2, \mathcal{S}_{3}, \mathcal{S}_{4}$ where
we expect the approximation to be valid for $r>0$ large.

\begin{remark}
\label{rem:parameterchoice}
We have already introduced a number of parameters (e.g. $\theta_1^*$,
$\phi^*$, $\eta^*$, $r^*$) thus far that will have to be chosen to
satisfy a number of properties.  Instead of writing these properties
explicitly, we simply need to make sure that we vary the parameters in
a consistent way to obtain them.  That is, we will always choose
$\theta_1^*>0$ small enough, then pick $\phi^*=\phi^*(\theta_1^*)>0$
large enough, then choose $\eta^*=\eta^*(\theta_1^* , \phi^*)>0$ large
enough, and then finally pick $r^*= r^*(\theta_1^*, \phi^*, \eta^*)>0$
large enough.  For example, to assure that $\mathcal{S}_3$ and
$\mathcal{S}_4$ defined above are of the required form outlined in
Section \ref{I-subsec:Bas_structI} of \cite{HerzogMattingly2013I}, we can choose the parameters $\theta_1^*$,
$\phi^*$, $\eta^*$, and $r^*$ in this way to see that in fact
\begin{align*}
  &\mathcal{S}_{3} = \{(r, \theta)\in \mathcal{R} \, :\, r\geq r^*,
  \,\eta^* r^{-\frac{1}{2}} \leq |\phi_3| \leq \phi^* \},\\
  &\mathcal{S}_{4}= \{(r, \theta)\in \mathcal{R} \, : \, r \geq r^*,
  \, |\phi_3|\leq \eta^*  r^{-1/2} \}.
\end{align*}     
\end{remark}

\subsubsection{Remaining operators and regions when  $n =  2$.}  
Notice that for $0\leq \alpha < 1$
\begin{align*}
(I)_3+(II)_3\gg (III)_3+(IV)_3\,\, \, \text{ as }\, \, \,\lambda \rightarrow \infty .  
\end{align*}
When $\alpha=1$, then $(III)_3+(IV)_3$ also becomes dominant in
$\lambda$.  Therefore, this suggests that the region where $T_3\approx
L_{(r, \phi_3)}$ as $r \rightarrow \infty$ is of the form
\begin{align}
\mathcal{S}_{3} = \{(r, \theta)\in \mathcal{R} \, : \, r \geq r^*, \,
\eta^* r^{-1}\leq |\phi_3| \leq \phi^* \} 
\end{align}
for some $\eta^*>0$.  Here again, we have picked the parameters in the
way discussed in Remark \ref{rem:parameterchoice}.  Notice also that
the operator
\begin{align}
A = r \partial_{r} + 3\phi_3 \partial_{\phi_3} + \gamma_1^{(1)}
r^{-1} \partial_{\phi_3} +
\frac{\sigma^2}{2 r^2} \partial_{\phi_3}^2   
\end{align}    
contains the dominant part of $L_{(r, \phi_3)}$ as $r\rightarrow
\infty$ in the region
\begin{align}
  \mathcal{S}_{4} = \{(r, \theta)\in \mathcal{R} \, : \, r \geq r^*,
  \, |\phi_3|\leq \eta^* r^{-1} \}
\end{align}
where we have again picked $\phi^*$ and $r^*$ according to Remark
\ref{rem:parameterchoice}.  Summing this up, we have seen that when
$n=2$, the asymptotic operators are $T_1, T_2, T_3, A$ with
corresponding regions $\mathcal{S}_1, \mathcal{S}_2, \mathcal{S}_{3},
\mathcal{S}_{4}$ where the approximation is expected to be valid.

\subsection{Remaining analysis when $n=3,4$.}
If $0 \leq \alpha <1$, then
\begin{align*}
(I)_3+(II)_3 \gg (III)_3+(IV)_3 \, \, \, \text{ as } \, \, \, \lambda \rightarrow \infty.
\end{align*}
Therefore, we have that $L_{(r, \phi_3)} \approx T_3$ as
$r\rightarrow \infty$ in some region of the form
\begin{align}
\label{eqn:potlowbs3}
\{(r, \theta)\in \mathcal{R} \, : \, r \geq r^*, \, b(r)\leq |\phi_3| \leq \phi^* \}
\end{align}  
where $b$ satisfies
\begin{align*}
b(r) = c r^{-1} + o(r^{-1}) \,\, \text{ as } \,\, r \rightarrow \infty  
\end{align*}
for some $c>0$.  If $\alpha\geq 1$ however, it is not clear if the
terms in $L_{(r, \phi_3)}$ corresponding to
$(I)_3+(II)_3+(III)_3+(IV)_3$ contain the dominant part of the
operator because
\begin{align*}
(n+1) \phi_3 \partial_{\phi_3} + \gamma_1^{(3)} r^{-1} \partial_{\phi_3} = 0
\end{align*}  
when $(n+1) \phi_3 =- \gamma_1^{(3)}r^{-1}$.  Hence to analyze
$L_{(r, \phi_3)}$ around this other potentially dangerous curve, we
make another substitution $(r, \phi_3) \mapsto (r, \phi_4)$ where
\begin{align*}
\phi_4 &= r \phi_3 + c_3.
\end{align*} 
and $c_3 = \frac{\gamma_1^{(3)}}{n+2}$.  As before, we use the new variables $(r, \phi_4)$ to define the boundary curve $b$ precisely by setting  
\begin{align}
  \mathcal{S}_3 = \{(r, \theta)\in \mathcal{R} \, : \, r \geq r^*, \,
  |\phi_4|\geq \phi^*,\,|\phi_3| \leq \phi^* \}.
\end{align}

Now write $L_{(r, \phi_3)}$ in the variables $(r, \phi_4)$ to see that 
\begin{align*}
  L_{(r, \phi_4)} = r \partial_{r}+ (n+2) \phi_4 \partial_{\phi_4}+ \P_4 \partial_{r} + \Q_4\partial_{\phi_4} +
  \frac{\sigma^2}{2 r^{n-2}}\partial_{\phi_4}^2 +
  \bigg(\frac{\sigma^2}{2 r^n} \partial_r^2 \bigg)_{(r, \phi_4)}
\end{align*}  
where
\begin{align*}
  \P_4 &= \sum_{i\geq 0, j\geq 0} \alpha_{ij}^{(4)} r^{-i} \phi_4^{j} + R_{P_4}\\
  \Q_4&= \sum_{i\geq 1}\gamma_i^{(4)} r^{-i} +
  \sum_{i\geq 1, j \geq 1} \beta_{ij}^{(4)} r^{-i} \phi_4^j +
  R_{Q_4}
\end{align*}
where $\alpha_{ij}^{(4)}, \gamma_i^{(4)}, \beta_{ij}^{(4)}$ are
constants, all sums above are finite sums and, since $\phi_4$ will be
bounded by $\phi^*$ in any subsequent region, $R_{P_4}$ and $R_{Q_4}$ satisfy
\begin{align*}
|R_{P_4} | &\leq C_{P_4}(r +1) [|r^{-2} \phi_4|^{J} + r^{-J}] \\
|R_{Q_4} | & \leq C_{Q_4}(r^{2} +1) [|r^{-2} \phi_4|^{J} + r^{-J}]
\end{align*}
for some positive constants $C_{P_4}, C_{Q_4}$.  Here, note that both $C_{P_4}$ and $C_{Q_4}$ can be chosen independent of $\phi^*$ by picking $r^* > \phi_4^*$.    Considering the
effect of $L_{(r, \phi_4)}$ under $S_\alpha^\lambda(r, \phi_4):=
(\lambda r, \lambda^{-\alpha} \phi_4)$, $\alpha\geq 0$, we again
consider the following four terms in $L_{(r, \phi_4)}\circ S_\alpha^\lambda$ which could become dominant in $\lambda$ as $\lambda \rightarrow \infty$:
\begin{align*}
  &r \partial_{r}+(n+2) \phi_4 \partial_{\phi_4} +
  \lambda^{\alpha-1} \gamma^{(4)}_1 r^{-1} \partial_{\phi_4}+
  \lambda^{2\alpha-(n-2)} \frac{\sigma^2}{2
    r^{n-2}} \partial_{\phi_4}^2\\ 
  & =(I)_4 + (II)_4+(III)_4+(IV)_4 .
\end{align*} 
Similarly, we can now uncover all asymptotic operators and their associated
regions when $n=3$ and $n=4$ because the angular noise term $(IV)_4$ is of sufficient strength in $\lambda$.

\subsubsection{Remaining operators and regions when $n=3$}
Analogous to the case when $n=1$,
\begin{align}
T_4 = r \partial_{r} + 5 \phi_4 \partial_{\phi_4} 
\end{align}
satisfies $L_{(r, \phi_4)}\approx T_4$ as $r\rightarrow \infty$
when paths to infinity are restricted to a region of the form
\begin{align}
  \mathcal{S}_{4}=\{(r, \theta)\in \mathcal{R} \, : \, r \geq r^*, \,
  \,\eta^* r^{-\frac{1}{2}} \leq |\phi_4| \leq \phi^* \}
\end{align}
where the parameters $\eta^*, \phi^*>0$ have been chosen according to
Remark \ref{rem:parameterchoice}.  Also,
\begin{align}
  A= r \partial_{r} + 5 \phi_4 \partial_{\phi_4} +
  \frac{\sigma^2}{2 r} \partial_{\phi_4}^2
\end{align}
can be used to approximate $L_{(r, \phi_4)}$ asymptotically for
large $r$ in a region of the form
\begin{align}
  \mathcal{S}_{5}=\{(r, \theta)\in \mathcal{R} \, : \, r \geq r^*, \,
  \, |\phi_4| \leq \eta^* r^{-\frac{1}{2}}\}
\end{align}
where we have again picked the parameters as described in Remark
\ref{rem:parameterchoice}.  Thus, when $n=3$, we obtain the
approximating operators $T_1, T_2, T_3, T_4, A$ with corresponding
regions $\mathcal{S}_1, \mathcal{S}_2, \mathcal{S}_3, \mathcal{S}_4,
\mathcal{S}_5$ where the approximation is expected to be valid.

\subsubsection{Remaining operators and regions when $n= 4$}  Similar to the case when $n=2$, the region where 
\begin{align*}
T_4 = r \partial_{r} + 6 \phi_4 \partial_{\phi_4}
\end{align*}
is in good approximation to $L_{(r, \phi_4)}$ for $r>0$ large is given by 
\begin{align*}
  \mathcal{S}_4 = \{(r, \theta) \in \mathcal{R} \, : \, r \geq
  r^*,\,\eta^* r^{-1} \leq |\phi_4| \leq \phi^* \}
\end{align*}
where the parameters have been chosen appropriately.  Also,
\begin{align*}
  A = r \partial_{r} + 6 \phi_4 \partial_{\phi_4} + \gamma_1^{(4)}
  r^{-1} \partial_{\phi_4}+ \frac{\sigma^2}{2
    r^2} \partial_{\phi_4}^2
\end{align*}
contains the dominant, large $r$ behavior corresponding to $L_{(r,
  \phi_4)}$ in
\begin{align*}
\mathcal{S}_5 = \{(r, \theta) \in \mathcal{R} \, : \, r \geq r^*,\,|\phi_4|\leq \eta^* r^{-1} \}.  
\end{align*}
Thus when $n=4$, we obtain the asymptotic operators $T_1, T_2, T_3,
T_4, A$ and their regions $\mathcal{S}_1, \mathcal{S}_2,
\mathcal{S}_3, \mathcal{S}_4, \mathcal{S}_5$ of approximation.

\subsection{All operators and regions for general $n\geq 1$:}
\label{subsec:ingeneral}
We continue until this inductive procedure until it stops.  More precisely, if $n=2j+1$ or
$n=2j+2$ for some $j\geq 0$, then the analysis yields the
asymptotic operators
\begin{align*}
T_1, T_2, \ldots, T_{j+3}, A
\end{align*}
and respective regions 
\begin{align*}
  \mathcal{S}_1, \mathcal{S}_2, \ldots,\mathcal{S}_{j+3},
  \mathcal{S}_{j+4}.
\end{align*}
To write each of them explicitly, set $\phi_2:=\theta$ for $m\geq 3$ and let 
\begin{align}
\label{eqn:angularcoordinate}
  \phi_{m}= r  \phi_{m-1} +c_{m-1}.
\end{align}   
where $c_2 =\frac{\gamma_1}{n+1}$ and $c_m = \frac{\gamma_1^{(m)}}{n+m-1}$ for $m \geq 3$.  We see that $T_1, \ldots, T_{j+3}$ are given by  
\begin{alignat*}{2}
&T_1 = r\cos(n\theta) \partial_r + \sin(n \theta) \partial_\theta &&\\
\nonumber &T_m  = r \partial_{r} + (n+{m-2}) \phi_{m} \partial_{\phi_{m}},\,\,\,  && m=2, 3,  \ldots, j+3.
\end{alignat*}
If $n=2j+1$, then the diffusive operator $A$ satisfies
\begin{align*}
  A = r \partial_{r} +(3j+2)
  \phi_{j+3} \partial_{\phi_{j+3}} + \frac{\sigma^2}{2
    r} \partial_{\phi_{j+3}}^2.
\end{align*}
On the other hand if $n=2j+2$, we have 
\begin{align*}
A=
r \partial_{r} +(3j+3) \phi_{j+3} \partial_{\phi_{j+3}} +\gamma_1^{(j+3)} r^{-1} \partial_{\phi_{j+3}} +\frac{\sigma^2}{2 r^2} \partial_{\phi_{j+3}}^2. 
\end{align*}
Choosing the parameters $\theta_1^*, \phi^*, \eta^*, r^*$ according to
Remark \ref{rem:parameterchoice}, we may write all corresponding
regions as follows. Note first that $\mathcal{S}_1, \mathcal{S}_2,
\ldots, \mathcal{S}_{j+2}$ are given by
\begin{align*}
  &\mathcal{S}_1 = \{(r, \theta) \in \mathcal{R}\, : \,r\geq r^*, \,
  0<\theta_1^*\leq |\theta| \leq \theta_0^*  \}\\
  &\mathcal{S}_2= \{ (r, \theta) \in \mathcal{R}\, : \, r\geq r^*,\, |\phi_3|\geq \phi^*,\, |\theta| \leq \theta_1^*\} \\
  \nonumber &\mathcal{S}_m = \big\{(r, \theta) \in \mathcal{R}\, :
  \,r\geq r^*, |\phi_{m+1}|\geq \phi^*, \, |\phi_m|\leq \phi^* \big\}
\end{align*}
for $m=3, \ldots, j+2$. If $n=2j+1$, the final two regions satisfy  
\begin{align*}
&\mathcal{S}_{j+3}= 
\big\{(r, \theta) \in \mathcal{R}\, : \,r\geq r^*, \,  \eta^*
r^{-\frac{1}{2}}\leq |\phi_{j+3}| \leq \phi^*  \big\}\\ 
&\mathcal{S}_{j+4}= 
\big\{(r, \theta) \in \mathcal{R}\, : \,r\geq r^*, \,  |\phi_{j+3}|\leq \eta^* r^{-\frac{1}{2}} \big\}
\end{align*}
On the other hand if $n=2j+2$, then $\mathcal{S}_{j+3}$ and $\mathcal{S}_{j+4}$ are given by 
\begin{align*}
  &\mathcal{S}_{j+3}=\big\{(r, \theta) \in \mathcal{R}\, :\, r\geq
  r^*, \,  \eta^* r^{-1} \leq |\phi_{j+3}|\leq \phi^* \big\}\\ 
  &\mathcal{S}_{j+4}=\big\{(r, \theta) \in \mathcal{R}\, :\, r\geq
  r^*, \, |\phi_{j+3}| \leq \eta^* r^{-1} \big\}.
\end{align*}

It is also important to notice that $L$, when written in the variables
$(r, \phi_m)$ for $m=3, \ldots,j+3$, satisfies
\begin{align}
\label{eqn:asyLm}
L_{(r, \phi_m)} =  r  \partial_{r} + (n+m-2) \phi_m \partial_{\phi_m}+ \P_m \partial_{r} + \Q_{m} \partial_{\phi_m}
+\frac{\sigma^2}{2 r^{n-2m+6}} \partial_{\phi_m}^2+
\bigg(\frac{\sigma^2}{2 r^n} \partial_r^2\bigg)_{(r, \phi_m)}
\end{align}
where
\begin{align*}
  \P_m &=\sum_{i, j\geq 0} \alpha_{ij}^{(m)} r^{-i} \phi_m^{j} + R_{P_m}\\
  \Q_m &= \sum_{i\geq 1} \gamma_{i}^{(m)} r^{-i} +
  \sum_{i\geq 1, j \geq 1} \beta_{ij}^{(m)} r^{-i} \phi_m^j+ R_{Q_m}
\end{align*}
where $\alpha_{ij}^{(m)}, \gamma_i, \beta_{ij}^{(m)}$ are constants, all sums are finite sums, and by the choice of $J>\frac{n}{2}+6$ the remainders satisfy the following bounds on $\mathcal{S}_m$
\begin{align*}
|R_{P_m}|\leq C_{P_m}r ^{-2}, \qquad |R_{Q_m}| \leq C_{Q_m} r^{-2},  
\end{align*}
$m=3, \ldots, j+4$.  Note that the constants $C_{P_m}$ and $C_{Q_m}$, $m\geq 3$, depend on $\phi^*$ but they do not depend on $r^*$.

\begin{figure}
\centering
\begin{tikzpicture}[scale=1.1]

\draw[thick,color=black,opacity=.75] (-6,2) -- (6,2) node[right] {$\theta$};
\draw[thick, color=gray] (0, 2) -- (0, 1.8)
node[below]{$0$};
\node[thick, color = black, opacity=.75] at (-6.2,8) {$r$};

 \draw[thick,color=blue] ({-6},2.2) -- ({-6},1.8)
 node[below]{$-\theta_1^*$};
 \draw[thick,color=blue] ({6},2.2) -- ({6},1.8)
 node[below]{$\theta_1^*$};
 \draw[thick, color=blue] ({-5.8},2) -- ({-6.2},2);
\node[color=blue] at (-6.3,2) {$r^*$};

\draw[fill=gray!50!white]  plot[smooth, samples=300, domain=2:8] (9.5/\x, \x) -- 
plot[smooth, samples=300, domain=8:2] ({6}, \x );
\draw[fill=gray!50!white]  plot[smooth, samples=300, domain=2:8] ({-6}, \x ) -- 
plot[smooth, samples=300, domain=8:2] (-10.5/\x, \x);
\node[color=black] at (4,5) {$\mathcal{S}_2$};
\node[color=black] at (-4,5) {$\mathcal{S}_2$};

\draw[fill=gray!25!white]  plot[smooth, samples=300, domain=2:8]  (-10.5/\x, \x) -- 
plot[smooth, samples=300, domain=8:2] (-11/\x^2-.5/\x, \x) ;
\draw[fill=gray!25!white]  plot[smooth, samples=300, domain=2:8]  (9/\x^2-.5/\x,\x) -- 
plot[smooth, samples=300, domain=8:2]  (9.5/\x, \x);
\node[color=black] at (1.4,4) {$\mathcal{S}_3$};
\node[color=black] at (-1.6,4) {$\mathcal{S}_3$};


\draw[thin,color=blue,,pattern=grid,pattern color
  =blue!60,opacity=.75]  plot[smooth, samples=300, domain=2:8]   (-11/\x^2-.5/\x, \x) -- 
plot[smooth, samples=300, domain=8:2] (-5/\x^2.5-.5/\x-1/\x^2, \x);
\draw[thin,color=blue,,pattern=grid,pattern color=blue!60,opacity=.75]  plot[smooth, samples=300, domain=2:8]  (5/\x^2.5-.5/\x-1/\x^2,\x) -- plot[smooth, samples=300, domain=8:2]  (9/\x^2 -.5/\x,\x);
\node[color=black] at (.6,2.6) {$\mathcal{S}_4$};
\node[color=black] at (-1.3,2.6) {$\mathcal{S}_4$};

\draw[fill=white]  plot[smooth, samples=300, domain=2:8]   (-5/\x^2.5 -.5/\x -1/\x^2, \x) -- 
plot[smooth, samples=300, domain=8:2] (5/\x^2.5-.5/\x-1/\x^2, \x);
\node[color=black] at (-.4,2.4) {$\mathcal{S}_5$};

\draw[red, very thick, domain=2.6:8, samples=100]
plot ({-1/\x^2-.5/\x},{\x});
\draw[red, very thick, domain=2:2.2, samples=100]
plot ({-1/\x^2-.5/\x},{\x});

\draw[dashed, green!50!black, very thick, domain=2.6:8, samples=100]
plot ({-.005*\x^3-1/\x^2-.5/\x},{\x});
\draw[dashed, green!50!black, very thick, domain=2:2.2, samples=100]
plot ({-.005*\x^3-1/\x^2-.5/\x},{\x});

\draw[dashed, green!40!black, very thick, domain=2.6:8, samples=100]
plot ({.005*\x^3-1/\x^2-.5/\x},{\x});
\draw[dashed, green!40!black, very thick, domain=2:2.2, samples=100]
plot ({.005*\x^3-1/\x^2-.5/\x},{\x});

\draw[dashed, green!40!black, very thick, domain=2.6:8, samples=100]
plot ({.007*\x^3-1/\x^2-.5/\x},{\x});
\draw[dashed, green!40!black, very thick, domain=2:2.2, samples=100]
plot ({.007*\x^3-1/\x^2-.5/\x},{\x});

\draw[dashed, green!40!black, very thick, domain=2.6:8, samples=100]
plot ({-.007*\x^3-1/\x^2-.5/\x},{\x});
\draw[dashed, green!40!black, very thick, domain=2:2.2, samples=100]
plot ({-.007*\x^3-1/\x^2-.5/\x},{\x});

\draw[dashed, green!40!black, very thick, domain=2.6:8, samples=100]
plot ({-.01*\x^3-1/\x^2-.5/\x},{\x});
\draw[dashed, green!40!black, very thick, domain=2:2.2, samples=100]
plot ({-.01*\x^3-1/\x^2-.5/\x},{\x});

\draw[dashed, green!40!black, very thick, domain=2.6:8, samples=100]
plot ({.01*\x^3-1/\x^2-.5/\x},{\x});
\draw[dashed, green!40!black, very thick, domain=2:2.2, samples=100]
plot ({.01*\x^3-1/\x^2-.5/\x},{\x});

\end{tikzpicture}
\caption{Recalling that the constants \(\protect c_i\) were introduced below equation \eqref{eqn:angularcoordinate}, a sketch of the regions \(\mathcal{S}_i\), \(i=2,3,4,5\), is plotted when \(n=3\), \(c_2=1/2\), \(c_3=1\), \(\phi^*=10\), and \(\eta^*=5\).  A simple example of an operator which has this same region decomposition is $L= r\partial_r + (3 \theta +2 r^{-1}+ 5 r^{-2}) \partial_\theta + \frac{\sigma^2}{r^3} \partial_r^2 + \frac{\sigma^2}{r^{5}}\partial_\theta^2 $.  
In the absence of noise, the dynamics defined by \(r^3L\) explodes in finite time along the solid  trajectory splicing the interior of $\mathcal{S}_5$ in the figure above.  The formula of this unstable trajectory is given by the equation $\phi_4 = r^2\theta + \frac{r}{2} + 1=0.$  Moreover, away from this trajectory in the absence of noise, solutions along \(r^3L\) push away from this unstable trajectory, eventually exiting though one of the boundaries \(\theta=\pm \theta_1^*\).  The dashed curves plotted above are a few representative stable trajectories for the system corresponding to the operator $r^3 L$.  The general formula for these stable trajectories is given by $\theta = \phi_4(0) r^3 - \frac{1}{2r} - \frac{1}{r^2}$, $\phi_4(0) \in \R_{\neq 0}$, .}
\label{fig1}
\end{figure}
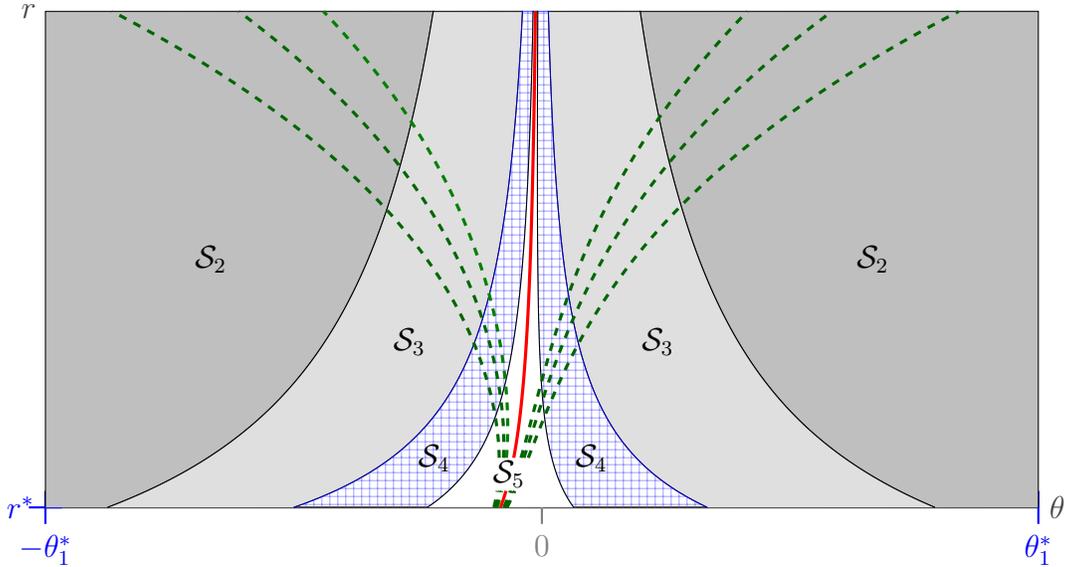

\section{The Construction of $\Psi$ on $\mathcal{R}$ in the general case}
\label{sec:constructionPsigen}

Employing the asymptotic analysis of the previous section, we now
define our candidate Lyapunov function $\Psi$ on the principal wedge
$\mathcal{R}$.  Recalling Section~\ref{I-sec:reductions} on Part I of
this work \cite{HerzogMattingly2013I}, we break up the
definition of $\Psi$ on $\mathcal{R}$ as follows
\begin{align*}
\Psi(r, \theta) = \begin{cases}
\psi_i(r, \theta) & \text{ if } (r, \theta) \in \mathcal{S}_i 
\end{cases}
\end{align*}  
where $i=0,1, \ldots, j+4$, $n=2j+1$ or $n=2j+2$.

As in Part I \cite{HerzogMattingly2013I}, to initialize the
propagation procedure used to define all of the
$\psi_i$'s we need one more region $\mathcal{S}_0$ (hence the $i=0$
above) defined by
\begin{align*}
  \mathcal{S}_0 = \{(r, \theta) \in \mathcal{R}\, : \, r\geq r^*, \,
  \theta_0^* \leq |\theta| \leq \tfrac{\pi}{n} \}
\end{align*}
where $\theta_0^*\in (\frac{\pi}{2n}, \frac{\pi}{n})$, and we define
the initial function $\psi_0$ on $\mathcal{S}_0$ by   
\begin{align*}
\psi_0(r, \theta) =r^p
\end{align*}
for some $p\in(0,n)$.  We recall that this choice is made because the
radial dynamics along $T_1$ is decreasing in  $\mathcal{S}_0$.

\subsection{The construction in the transport regions}
We first turn our attention to defining the functions $\psi_1, \psi_2,
\ldots, \psi_{j+3}$ respectively on the regions $\mathcal{S}_1,
\mathcal{S}_2, \ldots, \mathcal{S}_{j+3}$ as solutions to
boundary-value problems involving the asymptotic operators $T_1,
\ldots, T_{j+3}$.   Because the number of indices will be daunting
otherwise, we adopt the following conventions.
\begin{convention}
  When it is clear which coordinate system in which we are working,
  $(r, \phi_m)$ will be written more simply as $(r, \phi)$.  For
  example, $\psi_m(r, \phi_m)$ for $m=3,\ldots, j+3$ will often be
  written as $\psi_m(r, \phi)$.
\end{convention}  
\begin{convention}
  In the expressions we will derive for $\psi_1, \ldots, \psi_{j+3}$,
  there will be several parameters with double indices, e.g. see
  $p_{l,m}$ and $q_{l,m}$ below in Lemma \ref{lem:indtrans}.  The
  second index $m$ simply corresponds to the function $\psi_m$. Thus
  when it is clear that we are working with $\psi_m$, we will often
  write $p_{l, m}$ and $q_{l,m}$ more compactly as $p_l$ and $q_l$, 
  respectively.
\end{convention}

It is convenient in our analysis that the boundary conditions given for the Poisson equation defining $\psi_{j+4}$ on the
inner most region $\mathcal{S}_{j+4}$ (the one dominated by diffusion)
are symmetric under reflection in the angular coordinate
$\phi$ in $\mathcal{S}_{j+3}$. Thanks to this symmetry, the value of $\psi_{j+4}$ at the time
of exit of the diffusion from $\mathcal{S}_{j+4}$ depends only on the time of exit and
not on which side of the boundary it exits (each being
possible since the dynamics in $\mathcal{S}_{j+4}$ is diffusive).  As
we will see below, this allows us to define $\psi_{j+4}$ more simply.  Here, we will accomplish this desired symmetry by forcing the penultimate function $\psi_{j+3}$ to satisfy
\begin{align}
  \label{eqn:pensym}
  \psi_{j+3}(r, -\phi)= \psi_{j+3}(r, \phi)\,.
\end{align}
for $(r, \theta(r, \phi))\in \mathcal{S}_{j+3}$.  

The cost of producing this symmetry in the penultimate region can be seen in the
need to carefully choose the $h_i^\pm$ below in all of the preceding
regions since both the regions and the corresponding dominant
operators are inherently asymmetric in the angular coordinate.  Since we will have to make different choices of defining problems
above and below $\phi=0$ in each region to produce the symmetry, we will break up the definition of
$\psi_m$ in two pieces as follows
\begin{align*}
 \psi_m(r, \phi) = \begin{cases}
 \psi_m^+(r, \phi),&  (r, \theta(r, \phi)) \in \mathcal{S}_m,\, \, \phi>0\\
 \psi_m^-(r, \phi),& (r, \theta(r, \phi)) \in \mathcal{S}_m,\, \, \phi<0.  
 \end{cases}  
\end{align*}

\subsubsection*{The construction in $\mathcal{S}_1$} 
Let $\psi_1^{\pm}$ satisfy the following PDEs on $\mathcal{S}_1$:
\begin{equation}
\label{caseII:psi_1}
\left\{
\begin{aligned}
\big(T_1 \psi_1^{\pm}\big)(r, \theta) &= -h_1^{\pm} r^p | \theta|^{-q}\\
\psi_1^{\pm}(r, \pm \theta_0^*) &= \psi_0(r, \pm \theta_0^*).    
\end{aligned}\right.
\end{equation}   
where $q \in (\frac{p}{n}, 1)$ is fixed and $h_1^+, h_1^- >0$ will be
determined later (to produce the reflective symmetry).

Since $\theta_0^* > \frac\pi2$, we recall from Section \ref{I-sec:mot_lya} of Part I \cite{HerzogMattingly2013I} that the equations
above are not well-defined with the given boundary data because some
characteristics along $T_1$ cross $r=r^*$ before reaching the lines
$\theta=\pm\theta^*_0$. This can be easily remedied by enlarging the
domain of definition of the equation \eqref{caseII:psi_1} to
\begin{align*}
  \widetilde{ \mathcal{S}}_1=\Big\{(r, \theta) \in \mathcal{R}\, : \,
  0<\theta_1^*\leq |\theta|\leq \theta_0^*, \, r|\sin(n\theta_0^*)|^{\frac{1}{n}}\geq r^*\Big\}.
\end{align*}
With this modification of the domain, solving \eqref{caseII:psi_1} with  the method of
characteristics produces
\begin{align}
\label{eqn:psi_1c}
\psi_1^{\pm}(r, \theta) &= \frac{r^p}{|\sin(n\theta)|^{\frac{p}{n}}}
\bigg(|\sin(n\theta_0^*)| + h_1^\pm \int_{\theta}^{\pm\theta_0^*}
\frac{|\sin(n\alpha)|^{\frac{p}{n}}}{|\alpha|^q \sin(n\alpha)} \,
d\alpha \bigg)
\end{align}   
for $(r,\theta) \in \mathcal{S}_1$.    
In particular, we observe that $\psi_1$ is homogeneous under
$S_0^\lambda$, $\psi_1(r, \theta)>0$ for all $(r, \theta)$ with $r>0$
and $|\theta|\in (0,\tfrac{\pi}{n})$, and $\psi_1(r, \theta)
\rightarrow \infty$ as $r\rightarrow \infty$, $(r, \theta) \in
\mathcal{S}_1$ .

\subsubsection*{The construction in $\mathcal{S}_2$}
In a similar fashion, let $\psi_2^{\pm}$ solve
\begin{equation}
\label{eqn:psi_2}
\left\{\begin{aligned}
  \big(T_2 \psi_2^{\pm}\big)(r, \theta) &= - h_2^{\pm} r^p | \theta|^{-q}\\
  \psi_2^{\pm}(r, \pm \theta_1^*) &= \psi_1^\pm (r, \pm \theta_1^*)  
\end{aligned}\right.
\end{equation} 
on $\mathcal{S}_2$ where $h_2^+, \, h_2^->0$.  Note that we may solve
\eqref{eqn:psi_2} explicitly using the method of characteristics to
obtain the following expression for $\psi_2^\pm$:
\begin{align}
 \psi_2^\pm (r,\theta)&= d_{12}^{\pm}\frac{r^p}{|\theta|^{\frac{p}{n}}} + d_{22}^{\pm}
  \frac{r^p}{|\theta|^q}
\end{align}
where
\begin{align*}
  d_{12}^{\pm}=|\theta_1^*|^{\frac{p}{n}} \psi_1^\pm(1, \pm\theta_1^*) -
h_2^\pm \frac{|\theta_1^*|^{\frac{p}n-q}}{qn -p} \quad\quad\text{and}\quad\quad
d_{22}^{ \pm} = \frac{h_2^\pm}{qn-p}.
\end{align*}
Observe that $\psi_2^\pm$ consists of two terms, each of which is
homogeneous under $S_\alpha^\lambda$ for every $\alpha\geq 0$.
Moreover,
\begin{align}
\label{eqn:psi2ineq}
\psi_2^\pm (r,\theta) \geq |\theta_1^*|^{\frac{p}{n}} \psi_1^\pm(1,
\pm \theta_1^*) \frac{r^p}{|\theta|^{\frac{p}{n}}}
\end{align} 
on $\mathcal{S}_2$.  Thus we see that $\psi_2 >0$ on $\mathcal{S}_2$
and $\psi_2(r, \theta) \rightarrow \infty$ as $r\rightarrow \infty$,
$(r, \theta) \in \mathcal{S}_2$.

\subsubsection*{The inductive construction in the remaining transport regions}

We can now use the same idea employed above for $\psi_2$ to define
$\psi_3, \ldots, \psi_{j+3}$ inductively.  Thus for $m=3, \ldots,
j+3$, define $\psi_m$ as the solution of
\begin{equation}
\label{eqn:gen_trans}
\left\{\begin{aligned}
  (T_{m} \psi_{m}^{\pm})(r, \phi) &= -h_{m}^{\pm} r^{p_m}|\phi|^{-q_{m}} \\
  \psi_{m}^{\pm}(r, \pm \phi^*) &= \psi_{m-1}(r, \phi_{m-1}(\pm
  \phi^*))
\end{aligned}\right.
\end{equation}
for all $(r, \theta(r, \phi))\in \mathcal{S}_m$ where $p_m,
q_m, h_m^\pm>0$.  We again recall that we have suppressed the index
$m$ in $(r, \phi)$ using our convention.  We have also suppressed the
second index $m$ in $p_m$ and $q_m$ above; that is, $p_{m,m}=p_m$ and
$q_{m,m}=q_m$.    

Inductively, $p_{m}$ and $q_m$ are chosen to satisfy  
\begin{alignat}{2}
\notag &p_2 = p, \qquad && q_2 = q,\\
\label{eqn:rho_eta}& p_{m} = p_{m-1}+ q_{m-1}, \qquad && m=3, \ldots, j+3\\
\notag & q_{m} \in \Big(
q_{m-1}\vee\tfrac{p_m}{n+m-2},1\Big) \qquad && m=3, \ldots, j+3.
\end{alignat}
While these choices at the outset may seem mysterious, they are all
determined by the exit distribution of the diffusion generated by $A$
from $\mathcal{S}_{j+4}$ and by the scaling relationships of the
$\psi_i$'s along common boundaries. For further information, see the discussion in
Section~\ref{I-sec:scalingRelationDifRegion} of Part I \cite{HerzogMattingly2013I}.

We now prove a lemma which gives an expression for $\psi_m$ which is
convenient  for further analysis.  Although we will need them, at first
glance it is important to ignore the many relations that the constants
in the statement of the result satisfy.  The basic form of
$\psi_m^\pm$ is what is most important.
\begin{lemma}
\label{lem:indtrans}
For each $m=3, \ldots,j+3$ we may write 
\begin{align}
\label{eqn:gentrexp}
\psi_{m}^{\pm}(r, \phi) = \sum_{l=1}^{m} d_{l}^{\pm} \frac{r^{p_{l}}}{|\phi|^{q_{l}}}
\end{align} 
where the positive constants $p_{l}=p_{l,m}$ and $q_l=q_{l,m}$ satisfy
\begin{xalignat}{2}
\label{eqn:ind}
\notag &p_{1,2}=p_{2,2}= p \qquad \qquad && q_{1,2}= \frac{p}{n},\, \, q_{2,2}=q,\\
\notag &p_{l,m} = p_{l , m-1} + q_{l,m-1}&&l < m, \,\,  l,m = 3, \ldots, j+3\\
&p_{m,m} = p_{m-1, m}=p_m && m =3,\ldots, j+3 \\
\notag &q_{l,m}= \frac{p_{l,m}}{n+m-2},&& l<m, \, \, l,m  =3, \ldots, j+3\\
\notag & q_{m,m} = q_m, && m = 3,\ldots, j+3.
\end{xalignat}
Moreover, the constants $d_{l}^\pm =d_{l,m}^\pm$ are such that $\psi_m
>0$ on $\mathcal{S}_m$ and
\begin{align}
\label{eqn:blowup}
\psi_m(r, \phi) \rightarrow \infty
\end{align}
as $r\rightarrow \infty$, $(r, \theta(r,\phi))\in \mathcal{S}_m$.  
\end{lemma}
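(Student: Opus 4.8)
The plan is to prove the lemma by induction on $m$, with base case $m=2$ given by the explicit formula for $\psi_2^{\pm}$ derived just above \eqref{eqn:psi2ineq}: that formula is exactly of the form \eqref{eqn:gentrexp}--\eqref{eqn:ind} with $p_{1,2}=p_{2,2}=p$, $q_{1,2}=p/n$, $q_{2,2}=q$, and positivity together with \eqref{eqn:blowup} is furnished by \eqref{eqn:psi2ineq}. Fix $m\in\{3,\dots,j+3\}$ and assume the assertion for $\psi_{m-1}$. Since $T_m=r\,\partial_r+(n+m-2)\,\phi\,\partial_\phi$, a direct computation gives $T_m\bigl(r^{a}|\phi|^{-b}\bigr)=\bigl(a-(n+m-2)b\bigr)r^{a}|\phi|^{-b}$; hence, on each side $\pm\phi>0$, the functions $r^{a}|\phi|^{-a/(n+m-2)}$ span the solutions of $T_m\psi=0$, while
\[
\psi_{m,\mathrm{part}}^{\pm}(r,\phi):=d_m^{\pm}\,\frac{r^{p_m}}{|\phi|^{q_m}},\qquad d_m^{\pm}:=\frac{h_m^{\pm}}{(n+m-2)\,q_m-p_m},
\]
is a particular solution of the PDE in \eqref{eqn:gen_trans}. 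By the choice $q_m\in\bigl(q_{m-1}\vee\tfrac{p_m}{n+m-2},\,1\bigr)$ made in \eqref{eqn:rho_eta}, the denominator is strictly positive, so $d_m^{\pm}>0$; this is the ``new'' $l=m$ term in \eqref{eqn:gentrexp}.

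Next I would read off the boundary data. By \eqref{eqn:angularcoordinate}, $\phi_{m-1}=(\phi_m-c_{m-1})/r$, so evaluating the inductive formula for $\psi_{m-1}$ along $\phi_m=\pm\phi^{*}$ gives
\[
\psi_{m-1}\bigl(r,\phi_{m-1}(\pm\phi^{*})\bigr)=\sum_{l=1}^{m-1}d_{l,m-1}^{\pm}\,\bigl|\pm\phi^{*}-c_{m-1}\bigr|^{-q_{l,m-1}}\,r^{p_{l,m-1}+q_{l,m-1}},
\]
which is a finite sum of powers $r^{p_{l,m}}$ with $p_{l,m}:=p_{l,m-1}+q_{l,m-1}$ for $l<m$. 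Using $q_{l,m-1}=p_{l,m-1}/(n+m-3)$ for $l<m-1$ and $q_{m-1}>p_{m-1}/(n+m-3)$ (both from \eqref{eqn:rho_eta} and the inductive hypothesis), one checks that these exponents are strictly increasing for $l=1,\dots,m-1$ and that $p_{m-1,m}=p_m$; in particular the top exponent $p_{m-1,m}$ coincides with $p_{m,m}=p_m$. The unique solution of \eqref{eqn:gen_trans} is then obtained by adding to $\psi_{m,\mathrm{part}}^{\pm}$, for each $l=1,\dots,m-1$, the homogeneous term $r^{p_{l,m}}|\phi|^{-q_{l,m}}$ with $q_{l,m}:=p_{l,m}/(n+m-2)$, each scaled so that the total matches the above boundary data on $|\phi_m|=\phi^{*}$ (the coefficient of $r^{p_m}$ additionally absorbing the boundary value $d_m^{\pm}(\phi^{*})^{-q_m}$ of $\psi_{m,\mathrm{part}}^{\pm}$). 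Collecting the terms yields \eqref{eqn:gentrexp} together with precisely the relations \eqref{eqn:ind}, and with closed---though, as the statement notes, unenlightening---formulas for the $d_{l,m}^{\pm}$ in terms of $d_{l,m-1}^{\pm}$, $h_m^{\pm}$, $\phi^{*}$ and $c_{m-1}$. Exactly as for $\mathcal{S}_1$ in Part I, one first enlarges the domain of \eqref{eqn:gen_trans} slightly so that the $T_m$-characteristic through every point of $\mathcal{S}_m$ reaches $|\phi_m|=\phi^{*}$ while remaining in $\{r\geq r^{*}\}$, where the data is prescribed; with $r^{*}$ chosen as in Remark~\ref{rem:parameterchoice} this causes no trouble.

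Finally I would establish positivity and the blow-up \eqref{eqn:blowup} from the characteristic representation rather than the closed form, just as \eqref{eqn:psi2ineq} does in the case $m=2$. Integrating \eqref{eqn:gen_trans} along the characteristic $r'\mapsto\bigl(r',\,\phi\,(r'/r)^{n+m-2}\bigr)$ through a point $(r,\phi)\in\mathcal{S}_m$ out to its exit point---which has the same angular sign as $\phi$, angular coordinate $\pm\phi^{*}$, and radial coordinate $r_{\mathrm{ex}}:=r\,(\phi^{*}/|\phi|)^{1/(n+m-2)}\geq r$---gives, with $\kappa:=q_m-\tfrac{p_m}{n+m-2}>0$,
\[
\psi_m^{\pm}(r,\phi)=\psi_m^{\pm}(r_{\mathrm{ex}},\pm\phi^{*})+d_m^{\pm}\,r^{p_m}\,|\phi|^{-p_m/(n+m-2)}\Bigl(|\phi|^{-\kappa}-(\phi^{*})^{-\kappa}\Bigr).
\]
Since $|\phi|\leq\phi^{*}$ on $\mathcal{S}_m$, the second summand is $\geq 0$ (and $>0$ off the outer boundary), while $\psi_m^{\pm}(r_{\mathrm{ex}},\pm\phi^{*})=\psi_{m-1}\bigl(r_{\mathrm{ex}},\phi_{m-1}(\pm\phi^{*})\bigr)\geq 0$ by the inductive hypothesis; hence $\psi_m>0$ on $\mathcal{S}_m$. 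For \eqref{eqn:blowup}, observe that $r'\mapsto\psi_m^{\pm}(r',\pm\phi^{*})$ equals the finite sum of powers displayed above (evaluated at $r'$), whose highest power is $(r')^{p_m}$ with coefficient $d_{m-1,m-1}^{\pm}\,\bigl|\pm\phi^{*}-c_{m-1}\bigr|^{-q_{m-1,m-1}}>0$---positive because $d_{m-1,m-1}^{\pm}=h_{m-1}^{\pm}/\bigl((n+m-3)q_{m-1}-p_{m-1}\bigr)>0$ is the new-term coefficient at level $m-1$. Hence $\psi_m^{\pm}(r',\pm\phi^{*})\to\infty$ as $r'\to\infty$, and since $r_{\mathrm{ex}}\geq r$ and $\psi_m^{\pm}(r,\phi)\geq\psi_m^{\pm}(r_{\mathrm{ex}},\pm\phi^{*})$, this gives \eqref{eqn:blowup}.

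The step I expect to be the main obstacle is the bookkeeping in the second paragraph: checking that the exponent list produced by the substitution $\phi_{m-1}=(\phi_m-c_{m-1})/r$ is exactly \eqref{eqn:ind}---in particular the collision $p_{m-1,m}=p_{m,m}=p_m$, so that the $r^{p_m}$-coefficient of $\psi_m^{\pm}$ genuinely mixes a homogeneous contribution with $\psi_{m,\mathrm{part}}^{\pm}$ and the number of terms in \eqref{eqn:gentrexp} remains $m$---and carrying the asymmetric $\pm$ dependence through the whole chain so that the symmetrization requirement \eqref{eqn:pensym} later imposed on $\psi_{j+3}$ stays achievable. The PDE-theoretic input (solvability of \eqref{eqn:gen_trans} and well-posedness on the enlarged domain) just repeats the analysis of $\mathcal{S}_1$ and $\mathcal{S}_2$ above, so it is not where the difficulty lies.
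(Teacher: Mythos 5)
Your proposal is correct and follows essentially the same route as the paper: induction on $m$, the method of characteristics for the explicit form \eqref{eqn:gentrexp} and the recursions for the $d_{l,m}^{\pm}$, and the characteristic-flow representation out to the exit point on $|\phi_m|=\phi^*$ for positivity and blow-up. The only cosmetic difference is that the paper concludes \eqref{eqn:blowup} by inductively propagating the explicit lower bound $\psi_m^{\pm}(r,\phi)\geq c\,r^{p_{1,m}}/|\phi|^{q_{1,m}}$ (which it reuses later for $\psi_{j+4}$ and Proposition~\ref{prop:existlu}), whereas you read off divergence from the positive leading coefficient of the boundary data; your representation yields the same bound with one more line.
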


\begin{remark}
\label{rem:constord}
By inducting on $m$, notice that \eqref{eqn:ind} and \eqref{eqn:rho_eta} imply the
following ordering of the constants $p_{l,m}$, $q_{l,m}$ for $m>2$
\begin{align}
\label{eqn:constord}
&p_{1,m}< p_{2,m}< \cdots < p_{m-1,m}= p_{m,m}\\
\nonumber &q_{1,m}< q_{2,m}< \cdots < q_{m-1,m}< q_{m,m}<1.  
\end{align} 
The above relations will be especially helpful later when we do
asymptotic analysis of $\psi_m$.
\end{remark}

\begin{remark}
  In the proof of Lemma \ref{lem:indtrans} we will, in addition,
  derive some properties of the constants $d_l^\pm= d_{l,m}^{
    \pm}$. These will be collected in the statement of Corollary~\ref{cor:d_k} below, and they will be used, in particular, to show
  that we can choose the constants $h_m^{\pm}>0$ in a natural way so
  that the symmetry property \eqref{eqn:pensym} is satisfied.
\end{remark}

\begin{corollary}
\label{cor:d_k}
For $l<m$ with $l,m \in \{3,\ldots, j+3\} $, define the following constants  
\begin{align*}
  b_{l,m}^{\pm} &= \frac{|\phi^*|^{q_{l,m}}}{|c_{m-1} \mp \phi^*
    |^{q_{l,m-1}}}, \qquad e_m = |\phi^*|^{q_{m-1,m}-q_{m,m}}.
\end{align*}
Then for $l<m-1, \, \, l,m \in \{3,\ldots, j+3 \}$,  we have
\begin{align*}
d_{l,m}^{ \pm }= d_{l,m-1}^{\pm} b_{l,m}^{ \pm}
\end{align*}
and for $m = 3, \ldots, j+3$
\begin{align*}
d_{m,m}^{ \pm } & =  \frac{h_{m}^{\pm}}{q_m (n+ m-2) - p_m}\\
d_{m-1,m}^{ \pm} &= d_{m-1, m-1}^{\pm} b_{m-1,m}^{\pm} - d_{m,m}^{ \pm} e_m .  
\end{align*}
\end{corollary}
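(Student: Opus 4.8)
The plan is to obtain Corollary~\ref{cor:d_k} as a byproduct of the proof of Lemma~\ref{lem:indtrans}: I would solve the transport problem \eqref{eqn:gen_trans} explicitly by the method of characteristics and read off the recursions for the coefficients $d_{l,m}^\pm$ by matching the boundary data at $|\phi_m| = \phi^*$, inducting on $m$ with base case $m=3$ supplied by the explicit formula for $\psi_2$ stated just before the lemma (so that $d_{1,2}^\pm = d_{12}^\pm$ and $d_{2,2}^\pm = d_{22}^\pm$). The key structural observations are the scaling identities for $T_m = r\partial_r + (n+m-2)\phi\partial_\phi$: a direct computation gives $T_m(r^a|\phi|^{-b}) = (a - b(n+m-2)) r^a|\phi|^{-b}$, so the choice $q_{l,m} = p_{l,m}/(n+m-2)$ makes each term $r^{p_{l,m}}|\phi|^{-q_{l,m}}$ with $l<m$ a homogeneous solution of $T_m$, while --- since $q_m(n+m-2) - p_m > 0$ by \eqref{eqn:rho_eta} --- the single term $d_{m,m}^\pm r^{p_m}|\phi|^{-q_m}$ with $d_{m,m}^\pm = h_m^\pm/(q_m(n+m-2) - p_m)$ is a particular solution of $T_m\psi = -h_m^\pm r^{p_m}|\phi|^{-q_m}$. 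This already produces the middle formula of the corollary and shows $d_{m,m}^\pm > 0$, a fact that will be propagated through the induction.

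Next I would compute the boundary datum. On $\phi_m = \pm\phi^*$ one has $\phi_{m-1} = (\pm\phi^* - c_{m-1})/r$, and choosing $\phi^*$ large in the sense of Remark~\ref{rem:parameterchoice} guarantees $\sgn(\pm\phi^* - c_{m-1}) = \pm$, so the datum in \eqref{eqn:gen_trans} is $\psi_{m-1}^\pm(r, (\pm\phi^* - c_{m-1})/r)$. Substituting the inductive form $\psi_{m-1}^\pm(r,\phi) = \sum_{l=1}^{m-1} d_{l,m-1}^\pm r^{p_{l,m-1}}|\phi|^{-q_{l,m-1}}$ and using $p_{l,m-1} + q_{l,m-1} = p_{l,m}$ from \eqref{eqn:ind} rewrites it as $\sum_{l=1}^{m-1} d_{l,m-1}^\pm |c_{m-1} \mp \phi^*|^{-q_{l,m-1}} r^{p_{l,m}}$. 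I would then match this against the candidate $\psi_m^\pm = \sum_{l=1}^{m-1} d_{l,m}^\pm r^{p_{l,m}}|\phi|^{-q_{l,m}} + d_{m,m}^\pm r^{p_m}|\phi|^{-q_m}$ evaluated at $|\phi| = \phi^*$, comparing the coefficients of like powers of $r$. For $l < m-1$ the exponents $p_{l,m}$ are distinct and below $p_m$ (Remark~\ref{rem:constord}), and equating coefficients of $r^{p_{l,m}}$ gives $d_{l,m}^\pm |\phi^*|^{-q_{l,m}} = d_{l,m-1}^\pm |c_{m-1} \mp \phi^*|^{-q_{l,m-1}}$, that is, $d_{l,m}^\pm = d_{l,m-1}^\pm b_{l,m}^\pm$. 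Exactly two terms of $\psi_m^\pm$ carry the top power $r^{p_m}$ --- the $l = m-1$ homogeneous one, with angular exponent $q_{m-1,m} = p_m/(n+m-2)$, and the particular one, with exponent $q_{m,m} = q_m$ --- while the coefficient of $r^{p_m}$ in the datum comes solely from $l = m-1$; equating yields $d_{m-1,m}^\pm |\phi^*|^{-q_{m-1,m}} + d_{m,m}^\pm |\phi^*|^{-q_m} = d_{m-1,m-1}^\pm |c_{m-1} \mp \phi^*|^{-q_{m-1,m-1}}$, which rearranges to $d_{m-1,m}^\pm = d_{m-1,m-1}^\pm b_{m-1,m}^\pm - d_{m,m}^\pm e_m$, the last formula of the corollary. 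Along the way I would check that the two leading terms of $\psi_{m-1}$, which share the exponent $p_{m-2,m-1} = p_{m-1,m-1}$, genuinely separate after the substitution $\phi_{m-1} \mapsto \phi_{m-1}(\pm\phi^*)$, since $q_{m-2,m-1} < q_{m-1,m-1}$ forces $p_{m-2,m} < p_{m-1,m}$, so no hidden coincidence of powers spoils the matching.

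To close the induction I would also need the well-posedness of \eqref{eqn:gen_trans} --- that every point of $\mathcal{S}_m$ is joined to the outer boundary $|\phi_m| = \phi^*$ by a $T_m$-characteristic lying in $\mathcal{S}_m$ --- together with the positivity and blow-up claims of Lemma~\ref{lem:indtrans}, all of which fall out of the characteristic representation: flowing from $(r,\phi) \in \mathcal{S}_m$ along $T_m$ reaches $|\phi| = \phi^*$ at a time $s^* > 0$ and radius $r' = r(\phi^*/|\phi|)^{1/(n+m-2)} \geq r$, and integrating $\frac{d}{ds}\psi_m = -h_m^\pm r^{p_m}|\phi|^{-q_m} < 0$ along this characteristic gives $\psi_m(r,\phi) = \psi_{m-1}(r', \phi_{m-1}(\pm\phi^*)) + h_m^\pm \int_0^{s^*} r(u)^{p_m}|\phi(u)|^{-q_m}\,du$, a positive boundary value (by the inductive positivity of $\psi_{m-1}$) plus a strictly positive integral; sending $r \to \infty$ forces $r' \to \infty$, and the leading $(r')^{p_m}$-term of $\psi_{m-1}(r',\phi_{m-1}(\pm\phi^*))$, whose coefficient $d_{m-1,m-1}^\pm|c_{m-1}\mp\phi^*|^{-q_{m-1,m-1}}$ is positive, drives the whole expression to $+\infty$. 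The hard part here is not any single estimate but the bookkeeping: I would have to keep the double-indexed exponents straight and confirm that the four recursions in \eqref{eqn:ind} are internally consistent with \eqref{eqn:rho_eta} --- in particular that $q_{l,m} = p_{l,m}/(n+m-2)$ for $l < m$ is reproduced at every step --- track the $\pm$ labels correctly through the change of angular coordinate, and handle jointly those exponents $p_{l,m}$ that happen to coincide.
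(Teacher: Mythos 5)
Your proposal is correct and follows the same route as the paper: the paper proves Lemma \ref{lem:indtrans} and Corollary \ref{cor:d_k} jointly by induction on $m$, solving \eqref{eqn:gen_trans} by characteristics and reading the recursions for $d^{\pm}_{l,m}$ off the boundary matching at $|\phi_m|=\phi^*$, exactly as you do. The paper states this step tersely ("one can easily derive the desired expression and all claimed relations"); your write-up supplies the details — the homogeneity identity for $T_m$, the separation of the two coincident top powers $p_{m-1,m}=p_{m,m}$, and the characteristic representation for positivity and blow-up — all of which are consistent with the paper's argument.
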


Before proving the lemma and corollary above, we state another lemma
which shows that, assuming the conclusions of Lemma \ref{lem:indtrans} and Corollary \ref{cor:d_k}, we can pick the constants $h_m^\pm$ in a reasonable
way so as to have \eqref{eqn:pensym}.
\begin{lemma}
\label{lem:h_choice}Fixing a constant $K_0>0$, for all $\epsilon >0$ there exists a
constant $K_1>0$ so that the following holds.
If $h_1^+, h_2^+, \ldots, h_{j+3}^+$ is a  collection of
positive parameters with $h_i^+ \leq K_0$ for all $i$ then for any
$\phi^* \geq K_1$ there exist a unique choice of positive $h_1^-, h_2^-, \ldots, h_{j+3}^-$ so that 
\begin{equation*}
\psi_{j+3}^+(r, -\phi)= \psi_{j+3}^-(r, \phi)
\end{equation*}
for all $(r, \phi)$ with $(r, \theta(r, \phi)) \in
\mathcal{S}_{j+3}$ and  
\begin{align*}
|h_{m}^+-h_m^-|\leq \epsilon 
\end{align*}     
for all $m=1,2, \ldots, j+3$. 
\end{lemma}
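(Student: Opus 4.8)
The strategy is to track how the symmetry condition $\psi_{j+3}^+(r,-\phi)=\psi_{j+3}^-(r,\phi)$ propagates backwards through the chain of regions and reduce it to a triangular system of equations in the unknowns $h_1^-,\dots,h_{j+3}^-$, which we then solve and estimate. By Lemma~\ref{lem:indtrans} we have $\psi_{j+3}^\pm(r,\phi)=\sum_{l=1}^{j+3} d_{l,j+3}^\pm\, r^{p_l}|\phi|^{-q_l}$, and since $|\phi|^{-q_l}$ is already even in $\phi$, the identity $\psi_{j+3}^+(r,-\phi)=\psi_{j+3}^-(r,\phi)$ is equivalent to the finite set of scalar equations $d_{l,j+3}^+ = d_{l,j+3}^-$ for $l=1,\dots,j+3$ (the monomials $r^{p_l}|\phi|^{-q_l}$ being linearly independent by \eqref{eqn:constord}). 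So the whole problem becomes: given $h_1^+,\dots,h_{j+3}^+$, solve $d_{l,j+3}^+=d_{l,j+3}^-$ for the $h_m^-$.

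Next I would unwind the recursions of Corollary~\ref{cor:d_k}. Iterating $d_{l,m}^\pm = d_{l,m-1}^\pm b_{l,m}^\pm$ down to $l=m-1$ gives, for each fixed $l$, a closed product formula $d_{l,j+3}^\pm = d_{l,l}^\pm \prod_{m=l+1}^{j+3} b_{l,m}^\pm - (\text{correction from the } e_m \text{ terms})$; more precisely $d_{l,l}^\pm$ itself is built from $h_l^\pm$ (via $d_{m,m}^\pm = h_m^\pm/(q_m(n+m-2)-p_m)$) together with $d_{l,l-1}^\pm b_{l-1,l}^\pm - d_{l,l}^\pm e_l$, so that $d_{l,j+3}^\pm$ is an affine function of $h_1^\pm,\dots,h_l^\pm$ whose coefficients are explicit products of the $b$'s and $e$'s, plus a base case coming from $\psi_1,\psi_2$ in \eqref{eqn:psi_1c}--\eqref{eqn:psi2ineq}. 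Crucially, the dependence is lower-triangular: $d_{l,j+3}^\pm$ involves only $h_1^\pm,\dots,h_l^\pm$, and its dependence on $h_l^\pm$ has coefficient $\frac{1}{q_l(n+l-2)-p_l}\prod_{m=l+1}^{j+3} b_{l,m}^\pm$, which is strictly positive (here one uses $q_l(n+l-2)>p_l$, which holds by the choice $q_l > p_l/(n+l-2)$ in \eqref{eqn:rho_eta}, and $b_{l,m}^\pm>0$). Therefore the system $d_{l,j+3}^+=d_{l,j+3}^-$ can be solved by forward substitution: having determined $h_1^-,\dots,h_{l-1}^-$ from the equations for indices $<l$, the $l$-th equation determines $h_l^-$ uniquely, and one checks $h_l^->0$. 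The positivity of $h_l^-$ is the point where $\phi^*\geq K_1$ is used: the "off-diagonal" coefficients multiplying $h_1^-,\dots,h_{l-1}^-$ and the base terms come with factors $b_{l,m}^\pm=|\phi^*|^{q_{l,m}}/|c_{m-1}\mp\phi^*|^{q_{l,m-1}}$ and $e_m=|\phi^*|^{q_{m-1,m}-q_{m,m}}$, and since $q_{l,m}>q_{l,m-1}$ and $q_{m-1,m}<q_{m,m}$, one has $b_{l,m}^+/b_{l,m}^-\to 1$ and $e_m$ bounded as $\phi^*\to\infty$; quantitatively $b_{l,m}^\pm = |\phi^*|^{q_{l,m}-q_{l,m-1}}(1+O(1/\phi^*))$ with the $O$ uniform, so the "$+$" and "$-$" versions of all structural constants agree up to a multiplicative error that tends to $1$.

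From there the two conclusions follow. For uniqueness and existence of positive $h_m^-$: forward substitution gives uniqueness immediately, and for positivity one writes $h_l^- = (\text{positive diagonal factor})^{-1}\big(\text{RHS}_l\big)$ where $\text{RHS}_l$ is $h_l^+$ times a factor $\to 1$ plus terms of the form $(h_i^+ - (\text{factor}\to1)\,h_i^-)$ for $i<l$; by downward induction on $l$ together with the bound $h_i^+\le K_0$ one shows $\text{RHS}_l = h_l^+(1+o(1)) + o(1)$ as $\phi^*\to\infty$, uniformly over admissible $h_i^+$, hence $\text{RHS}_l>0$ for $\phi^*$ large, giving $h_l^->0$. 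The same expansion yields $|h_m^+-h_m^-|\le\epsilon$ for $\phi^*\ge K_1(\epsilon)$: inductively, $|h_l^+-h_l^-|$ is bounded by a fixed linear combination of $\{|h_i^+-h_i^-|:i<l\}$ with coefficients $o(1)$ plus an $o(1)$ term, all uniform in the $h_i^+\le K_0$, so choosing $\phi^*$ large makes every $|h_m^+-h_m^-|$ as small as desired. The main obstacle is purely bookkeeping: carefully extracting the affine-triangular structure of $d_{l,j+3}^\pm$ in the $h_m^\pm$ from the nested recursions of Corollary~\ref{cor:d_k} and verifying the uniformity (in the $h_i^+$) of all the $\phi^*\to\infty$ asymptotics; there is no analytic difficulty, only the need to organize the indices so the triangular solve and its estimates are transparent.
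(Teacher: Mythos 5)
Your overall strategy is essentially the paper's: reduce the symmetry of $\psi_{j+3}$ to the finitely many scalar equations $d_{l,j+3}^+=d_{l,j+3}^-$ (your linear-independence remark via \eqref{eqn:constord} is a fine way to see this is both necessary and sufficient), unwind the recursion of Corollary~\ref{cor:d_k}, solve a triangular system one unknown at a time, and control everything with the uniform asymptotics $b_{l,m}^+/b_{l,m}^-=1+O(1/\phi^*)$. However, you have the triangular structure backwards, and as written your solution order fails at its first step. From Corollary~\ref{cor:d_k}, for $3\le l\le j+2$,
\begin{align*}
d_{l,j+3}^\pm=\bigl(d_{l,l}^\pm\, b_{l,l+1}^\pm-d_{l+1,l+1}^\pm\, e_{l+1}\bigr)\,b_{l,l+2}^\pm\cdots b_{l,j+3}^\pm,
\qquad d_{m,m}^\pm=\frac{h_m^\pm}{q_m(n+m-2)-p_m},
\end{align*}
so $d_{l,j+3}^\pm$ depends on $h_l^\pm$ \emph{and} $h_{l+1}^\pm$ (and the base case $d_{1,j+3}^\pm$ depends on $h_1^\pm$ and $h_2^\pm$); it does not depend only on $h_1^\pm,\dots,h_l^\pm$. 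The system is upper-bidiagonal, not lower-triangular, so ``forward substitution'' starting with the $l=1$ equation cannot determine $h_1^-$: that equation still contains the unknown $h_2^-$. The correct order---the one the paper uses---is backward: $d_{j+3,j+3}^+=d_{j+3,j+3}^-$ forces $h_{j+3}^-=h_{j+3}^+$; then $d_{j+2,j+3}^+=d_{j+2,j+3}^-$ gives $h_{j+2}^-=h_{j+2}^+\,b_{j+2,j+3}^+/b_{j+2,j+3}^-=h_{j+2}^++O(1/\phi^*)$; and so on down to $h_1^-$. Once the order is reversed, the rest of your argument (uniqueness from the one-unknown-at-a-time solve; positivity and $|h_m^+-h_m^-|\le\epsilon$ from the uniform $1+O(1/\phi^*)$ expansions of the $b$'s together with $e_{m+1}/b_{m,m+1}^-=(\phi^*)^{q_m-q_{m+1}}(1+O(1/\phi^*))\to 0$, using $q_m<q_{m+1}$ from \eqref{eqn:rho_eta}) goes through exactly as in the paper. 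There is also a small indexing slip worth fixing: $d_{l,l}^\pm$ is not ``built from $d_{l,l-1}^\pm b_{l-1,l}^\pm - d_{l,l}^\pm e_l$''; that expression computes $d_{l-1,l}^\pm$, not $d_{l,l}^\pm$.
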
  

\begin{remark}
\label{rem:hs}
Later, we will use the
  parameters $h_i^+$ to ensure that the fluxes across the boundaries
  between osculating regions where $\theta >0$ have the desired sign just as we did in
  Section~\ref{I-sub:bfc} of Part I \cite{HerzogMattingly2013I}. We
  will then need to choose the $h_i^-$ to both satisfy the boundary flux
  condition and make the $\psi_{j+3}$ have the desired symmetry. Note that since
  as $\phi^*\rightarrow \infty$ the regions become increasingly
  symmetric in the angular variable $\phi$, it is intuitively clear
  that the $h_i^-$ which produce a symmetric $\phi$ are close the
  $h_i^+$ which were already chosen. Hence, the $h_i^-$ which produce
  symmetry also satisfy the needed boundary flux condition.
\end{remark}

\begin{remark}
  Notice that the choice of $\phi^*$ determined by the lemma above is
  consistent with our process of picking parameters as outlined in
  Remark \ref{rem:parameterchoice}.
\end{remark}

We first give the  proof of Lemma \ref{lem:indtrans} and Corollary \ref{cor:d_k}
together and then prove Lemma \ref{lem:h_choice}
immediately afterwards.

\begin{proof}[Proof of Lemma \ref{lem:indtrans} and Corollary \ref{cor:d_k}]
  The proof will be done by induction on $m\geq 3$.  Suppose first
  that $m=3$.  Using the method of characteristics, one can easily
  derive the desired expression for $\psi_3$ and all claimed relations
  in Lemma \ref{lem:indtrans} and Corollary \ref{cor:d_k}.  To check \eqref{eqn:blowup} is valid for $\psi_3^\pm$, consider the
  dynamics along $T_3$
\begin{align*}
\dot{r}=r\qquad \text{and}\qquad
\dot{\phi} = (n+1) \phi, 
\end{align*}
and let $\t= \inf_{t>0}\{ |\phi_t|=\phi^*\}$.  Using the
inequality \eqref{eqn:psi2ineq}, notice for all $(r, \phi)$ such that
$(r, \theta(r, \phi))\in \mathcal{S}_3$ 
\begin{align*}
\psi_3^{\pm}(r, \phi) &= \psi_2^\pm\Big(
r_\t, \theta(r_\t,
\phi_\t)\Big) + h_3^\pm \int_0^\t
\frac{r_t^{p_3}}{|\phi_t|^{q_3}}\, dt \\ 
\nonumber &\geq   \psi_2^\pm\Big( r_\t,
\theta(r_\t, \phi_\t)\Big) = \psi_2^\pm(
r_\t, r_\t^{-1}(\phi^*-\frac{\gamma_1}{n+1}))\\ 
\nonumber & \geq c \frac{r^{p_{1,3}}}{|\phi|^{q_{1,3}}}
\end{align*}
for some constant $c>0$.  Hence we now see that $\psi_3 >0$ on
$\mathcal{S}_3$ and $\psi_3 \rightarrow \infty$ as $r\rightarrow
\infty$, $(r, \theta(r, \phi)) \in \mathcal{S}_3$.

Now assume all conclusions are valid for some $m-1 \geq 3$.  Using the
method of characteristics and the inductive hypothesis, we can obtain
the claimed expression for $\psi_m$ as well as all relationships
between constants in the statements of Lemma \ref{lem:indtrans} and
Corollary \ref{cor:d_k}.  To obtain \eqref{eqn:blowup}, we may assume
inductively that
\begin{align*}
\psi_{m-1}^\pm(r, \phi) \geq c \frac{r^{p_{1,m-1}}}{|\phi|^{q_{1, m-1}}}
\end{align*} 
for all $(r, \phi)$ with $(r, \theta(r, \phi)) \in
\mathcal{S}_{m-1}$ where $c>0$ is a constant (which is in general
different from the one used above).  As before, consider the dynamics
along $T_m$:
\begin{align*}
\dot{r}=r \qquad \text{and}\qquad\dot{\phi}= (n+m-2) \phi
\end{align*}   
and let, recycling notation, $\t=
\inf_{t>0}\{|\phi_t|=\phi^* \}$.  Then we similarly obtain
\begin{align*}
\psi_m^{\pm}(r, \phi) &= \psi_{m-1}^\pm\Big(
r_\t , \phi_{m-1}(r_\t,
\phi_\t)\Big) + h_m^\pm \int_0^\t
\frac{r_t^{p_m}}{|\phi_t|^{q_m}}\, dt \\ 
\nonumber &\geq   \psi_{m-1}^\pm\Big(
r_\t, \phi_{m-1}(r_\t,
\phi_\t)\Big) = \psi_{m-1}^\pm( r_\t,
r_\t^{-1}(\phi^*-c_{m-1}))\\ 
\nonumber & \geq c \frac{r^{p_{1,m}}}{|\phi|^{q_{1,m}}}
\end{align*} 
for some $c>0$ which is different from the $c$ used above.
This now finishes the proof of the result.
\end{proof}

\begin{proof}[Proof of Lemma \ref{lem:h_choice}]
Let $h_1^+, h_2^+, \ldots, h_{j+3}^+$ be a bounded collection of positive parameters and fix $\epsilon >0$.
  We will see that there is a unique choice of $h_1^-, h_2^-, \ldots,
  h_{j+3}^-$ which gives
\begin{align}
\label{rel:d_k}
d_{m, j+3}^{+}= d_{m, j+3}^{-} 
\end{align}    
for all $m=1, \ldots, j+3$.  By Corollary~\ref{cor:d_k} and
Lemma~\ref{lem:indtrans}, the first conclusion of the lemma will then
follow immediately since the $\psi_{j+3}^\pm$ are a linear combination
functions with coefficients $d_{m, j+3}^\pm$ respectively.  The closeness of the $h$'s
will follow for all $\phi^*$ large enough by inspection of the choice of the
$h_j^{-}$'s giving \eqref{rel:d_k} for all $m=1, \ldots, j+3$.

We proceed inductively and begin by analyzing the equality
\begin{align*}
d_{j+3-m, j+3}^{+}= d_{j+3-m, j+3}^{-}
\end{align*}  
for $m=0$.  Note that Corollary \ref{cor:d_k} implies that 
\begin{align*}
d_{j+3, j+3}^{+}= d_{j+3, j+3}^{ -} \iff h_{j+3}^+=h_{j+3}^-.
\end{align*}  
This, in particular, forces us to choose $h_{j+3}^-=h_{j+3}^+$.  Now consider the equality 
$d_{j+2 ,j+3}^{+} = d_{j+2, j+3}^{-}$.   
By Corollary \ref{cor:d_k} again and the fact that $d_{j+3, j+3}^{+}= d_{j+3, j+3}^{-}$, notice
\begin{align*}
d_{j+2, j+3}^{ \pm} = d_{j+2, j+2}^{\pm}b_{j+2, j+3}^{ \pm} - d_{j+3, j+3}^{+} e_{j+3}.
\end{align*}
Hence 
\begin{align*}
  d_{j+2, j+3}^{+}=d_{j+2, j+3}^{-} \iff h_{j+2}^{-} = h_{j+2}^{ +}
  \frac{b_{j+2, j+3}^{ +}}{b_{j+2, j+3}^{ -}}
\end{align*}
implying that we must pick $$h_{j+2}^{-} = h_{j+2}^{ +}
\frac{b_{j+2,j+3}^{ +}}{b_{j+2,j+3}^{ -}}.$$ Using the expressions given in Corollary \ref{cor:d_k} for the $b$'s, a simple argument
employing Taylor's theorem gives the following asymptotic formula as
$\phi^*\rightarrow \infty$:
\begin{align*}
\frac{b_{j+2, j+3}^{+}}{b_{j+2, j+3}^{-}} = 1 + O\big((\phi^*)^{-1}\big). 
\end{align*}  
Therefore   
\begin{align}
\label{eqn:h_{j+2}}
h_{j+2}^{-}= h_{j+2}^{+}+ O\big( (\phi^*)^{-1}\big)
\end{align} 
as $\phi^* \rightarrow \infty$.  In particular, this implies that the
unique choice of $h_{j+2}^+$ (which is positive for $\phi^*$ large
enough) determined by the relation $d_{j+2, j+3}^{+}= d_{j+2,
  j+3}^{-}$ has the desired closeness property $|h_{j+2}^+-h_{j+2}^{-}| < \epsilon$ for all $\phi^*$ large enough.  To
continue by induction, we need one more step to see how to proceed in
general.  Notice that this is only necessary if $j\geq 1$ where
$n=2j+1$ or $n=2j+2$.  By Corollary \ref{cor:d_k}, observe that
\begin{align*}
  d_{j+1, j+3}^{\pm} = d_{j+1, j+2}^{ \pm }b_{j+1, j+3}^{ \pm} =
  (d_{j+1, j+1}^{\pm} b_{j+1, j+2}^{ \pm}- d_{j+2, j+2}^{ \pm}
  e_{j+2})b_{j+1, j+3}^{ \pm}
\end{align*}       
and, by the right most equality, $d_{j+1, j+3}^{+}= d_{j+1, j+3}^{ -}$ is equivalent to 
\begin{align*}
  d_{j+1, j+1}^{-} = \Big(d_{j+1, j+1}^{+} \frac{b_{j+1,
      j+2}^{+}}{b_{j+1, j+2}^{-}} - d_{j+2, j+2}^{ +}\frac{
    e_{j+2}}{b_{j+1, j+2}^{-}} \Big) \frac{b_{j+1, j+3}^{ +}}{b_{j+1,
      j+3}^{ -}} + d_{j+2, j+2}^{ -} \frac{e_{j+2}}{b_{j+1, j+2}^{
      -}}.
\end{align*} 
By \eqref{eqn:h_{j+2}} and Corollary \ref{cor:d_k}, we have
\begin{align*}
d_{j+2, j+2}^{+} &= d_{j+2, j+2}^{-} + O\big( (\phi^*)^{-1}\big)\\
\frac{e_{j+2}}{b_{j+1, j+2}^{ -}} &= (\phi^*)^{q_{j+1}-q_{j+2}}\Big( 1+  O\big( (\phi^*)^{-1}\big)\Big)
\end{align*}
as $\phi^*\rightarrow \infty$.  Again, by Taylor's theorem we also have
\begin{align*}
  \frac{b_{j+1, j+2}^{+}}{b_{j+1,j+2}^{-}} &= 1+ O\big( (\phi^*)^{-1}\big) \\
  \frac{b_{j+1, j+3}^{+}}{b_{j+1, j+3}^{-}} &= 1+ O\big(
  (\phi^*)^{-1}\big)
\end{align*} 
as $\phi^* \rightarrow \infty$.  Putting these formulas together,
since $h_1^+, h_2^+, \ldots, h_{j+3}^+$ were assumed to be bounded and $q_{j+1}-q_{j+2}<0$ by \eqref{eqn:rho_eta} we obtain
\begin{align*}
d_{j+1, j+1}^{ -} = d_{j+1, j+1}^{+} +O\big((\phi^*)^{-1}\big)\\
 h_{j+1}^{ -} = h_{j+1}^{ +} +O\big((\phi^*)^{-1}\big)
\end{align*}
as $\phi^* \rightarrow \infty$.  This finishes the result in this
case.  To see in general when $d_{m, j+3}^{ +}=d_{m, j+3}^{-}$ for
general $m=2,\ldots, j$, assume by induction that
\begin{align*}
d_{m+1, m+1}^{ -}= d_{m+1, m+1}^{+} + O\Big((\phi^*)^{-1}\Big)  
\end{align*}
and note by successively applying $d^\pm_{l,m}=d_{l,m-1}^\pm b_{l,m}^\pm$ we obtain
\begin{align*}
d_{m,j+3}^{ \pm} &= d_{m, j+2}^{ \pm}b_{m,j+3}^{ \pm}= d_{m,m+1}^{\pm} b_{m,m+2}^{ \pm} b_{m,m+3}^{ \pm}\cdots b_{m,j+3}^{ \pm}\\
&=(d_{m,m}^{ \pm} b_{m,m+1}^{ \pm} - d_{m+1,m+1}^{ \pm} e_{m+1})b_{m,m+2}^{ \pm} b_{m,m+3}^{ \pm}\cdots b_{m,j+3}^{ \pm}.  
\end{align*}
Therefore, $d_{m,j+3}^{ +}=d_{m,j+3}^{ -}$ is equivalent to
\begin{align*}
  d_{m,m}^{-} = \Big(d_{m,m}^{+} \frac{b_{m,m+1}^{+}}{b_{m,m+1}^{-}} -
  d_{m+1,m+1}^{ +}\frac{ e_{m+1}}{b_{m,m+1}^{ -}} \Big)
  \frac{b_{m,m+2}^{ +}\cdots b_{m,j+3}^{+}}{b_{m,m+2}^{ -} \cdots
    b_{m,j+3}^{ -}} + d_{m+1,m+1}^{ -} \frac{e_{m+1}}{b_{m,m+1}^{-}}.
\end{align*} 
Similarly, using Taylor's theorem and the asymptotic formulas above, we see that 
\begin{align*}
d_{m,m}^{ -} = d_{m,m}^{ +} +O\big((\phi^*)^{-1}\big)\\
 h_{m}^{ -} = h_{m}^{ +} +O\big((\phi^*)^{-1}\big).
\end{align*}
Thus we have established the result for $h_2^\pm, h_3^\pm, \ldots,
h_{j+3}^\pm$.  Finally, to obtain the equality
\begin{align*}
d_{1,j+3}^{ +} = d_{1, j+3}^{-}
\end{align*}  
realize that it is equivalent to the relation 
\begin{align}
\label{eqn:h_1c}
d_{1,2}^{ -} = d_{1,2}^{+} \frac{b_{1,3}^{ +} \cdots b_{1,j+3}^{ +}}{b_{1,3}^{-} \cdots b_{1,j+3}^{-}}.
\end{align}
Since 
\begin{align*}
d_{1,2}^{ \pm} = |\theta_1^*|^{\frac{p}{n}}\psi_{1}^\pm (1, \pm \theta_1^*) - h_2^\pm \frac{|\theta_1^*|^{\frac{p}{n}-q}}{qn-p}
\end{align*}
and
\begin{align*}
 \frac{b_{1,3}^{ -} \cdots b_{1,j+3}^{ -}}{b_{1,3}^{+} \cdots b_{1,j+3}^{+}} =1 + O\big( (\phi^*)^{-1}\big)
\end{align*}
as $\phi^*\rightarrow \infty$, one can easily deduce from
\eqref{eqn:psi_1c} that for fixed
$\theta_1^*$, as $\phi^*\rightarrow \infty$ the choice of $h_1^-$
determined by the symmetry condition  \eqref{eqn:h_1c} approaches $h_1^+$.  Note that this
finishes the proof of the result.
\end{proof}
Now that we have the desired symmetry we turn to defining the final
function $\psi_{j+4}$ in the region $\mathcal{S}_{j+4}$ where noise
does play a role.

\subsection{The construction in the noise region $\mathcal S_{j+4}$}

Here, let $(r, \phi)=(r, \phi_{j+3})$ and define $\psi_{j+4}$ on
$\mathcal{S}_{j+4}$ as the solution of the following PDE
\begin{align}
\begin{cases}
  A\psi_{j+4}(r, \phi) = -h_{j+4}\,  r^{p_{j+4}}\\
  \psi_{j+3} = \psi_{j+3} \text{ on } \partial( \mathcal{S}_{j+3}\cap
  \mathcal{S}_{j+4})
\end{cases}
 \end{align}
 for all $(r, \phi)$ such that $(r, \theta(r, \phi)) \in
 \mathcal{S}_{j+4}$ where $h_{j+4}>0$ and
\begin{align}
\label{eqn:pj4}
p_{j+4} = \begin{cases}
p_{j+3} + \frac{q_{j+3}}{2} & \text{ if }\, n=2j+1, \, j\geq 0\\
p_{j+3}+ q_{j+3} & \text{ if } \, n =2j+2, \, j \geq 0.  
\end{cases}
\end{align}
We assume that the reader is familiar with the content of Section~\ref{I-sec:constructionnoise} of \cite{HerzogMattingly2013I} which outlines how one is able to solve the PDE above.

To solve for $\psi_{j+4}$, for simplicity let 
\begin{align*}
p_{m,j+4} = \begin{cases}
p_{m,j+3} + \frac{q_{m,j+3}}{2} & \text{ if }\, n=2j+1, \, j\geq 0\\
p_{m, j+3}+ q_{m, j+3} & \text{ if } \, n =2j+2, \, j \geq 0  
\end{cases}
\end{align*}
for $m=1,2, \ldots, j+3$.  Also, let $(r_t, \phi_t)$ denote the
diffusion defined by $A$ and ${\tau= \inf_{t>0}\{(r_t, \phi_t)
\notin \mathcal{S}_{j+4}\}.}$ Recalling the definition of $\partial(\mathcal{S}_{j+3} \cap \mathcal{S}_{j+4})$, we then see that
\begin{align}
\label{eqn:psifin1}
\psi_{j+4}(r, \phi) &= \E_{(r, \phi)}\psi_{j+3}(r_\tau, \phi_\tau)+ h_{j+1}\E_{(r, \phi)}\int_0^{\tau}
r_t^{p_{j+4}} \, dt\\ 
\nonumber &=   \sum_{m=1}^{j+3} \frac{d_{m,j+3}^{+}}{(\eta^*)^{q_{m}}}
r^{p_{m}} \E_{(r, \phi)} e^{p_{m}\tau }+ \frac{h_{j+4}}{p_{j+4}}r^{p_{j+4}}\E_{(r, \phi)}( e^{p_{j+4}
  \tau}-1)
\end{align}
where we have concatenated $p_{m,j+4}$ and $q_{m, j+3}$ to $p_{m}$ and $q_m$ respectively.  

To see that the maps $(r, \phi) \mapsto \E_{(r, \phi)} e^{p_m
  \tau} $ for $m=1,2,\ldots, j+4$ are well-defined and smooth
on $\mathcal{S}_{j+4}$, first observe that the process
\begin{align*}
\eta_t = \begin{cases}
r_t^{\frac{1}{2}}\phi_t & \text{ if } n=2j+1\\
r_t \phi_t + c_{j+3} & \text{ if } n =2j+2
\end{cases}
\end{align*}
satisfies the Gaussian SDE
\begin{align}
\label{eq:eta}
d\eta_t = \Big( \tfrac{3}{2}n+1\Big)\eta_t\, dt + \sigma \, dW_t .    
\end{align} 
Hence, we may write  
\begin{align*}
  \tau = \begin{cases}
    \inf\{t>0 \, : \, \eta_t \notin [-\eta^*, \eta^*]  \} & \text{ if } n=2j+1\\
    \inf\{ t>0 \,: \, \eta_t \notin [-\eta^* + c_{j+2}, \eta^* +
    c_{j+2}]\} & \text{ if } n= 2j+2.
\end{cases}  
\end{align*}
Applying Lemma~\ref{I-lem:exp_webf} of Part I \cite{HerzogMattingly2013I}, by choosing
$\eta^*>|c_{j+2}|$ large enough, it suffices to show that the
constants $p_{m}=p_{m, j+4}$ satisfy
\begin{align*}
p_{1,j+4} < p_{2, j+4} < \cdots < p_{j+4, j+4} < \tfrac{3n+2}{2}.
\end{align*}  
The fact that 
\begin{align*}
p_{1,j+4} < p_{2, j+4} < \cdots < p_{j+4, j+4}
\end{align*}
follows by Remark \ref{rem:constord} and the definition of the
constants $p_{m, j+4}$, $m=1,2, \ldots, j+4$.  The remaining bound can
be obtained inductively in either case $(n=2j+1 \text{ or } n=2j+2)$
by using the definition of $p_{j+4, j+4}=p_{j+4}$, the relations
\eqref{eqn:rho_eta}, and the choice of $p\in
(0,n)$.

To show that $\psi_{j+4}$ is strictly positive on $\mathcal{S}_{j+4}$
and $\psi_{j+4}(r, \phi) \rightarrow \infty$ as $r\rightarrow \infty$
with $(r, \phi)$ such that $(r, \theta(r, \phi)) \in
\mathcal{S}_{j+4}$, using \eqref{eqn:psifin1} we see that for some
constant $c>0$
\begin{align*}
  \psi_{j+4}(r, \phi) &\geq \E_{(r, \phi)} \psi_{j+3}(r_{\tau}, \phi_{\tau})\\
  &\geq c \E_{(r, \phi)} \frac{r^{p_{1,
        j+3}}_\tau}{|\phi_{\tau}|^{q_{1, j+3}}}
\end{align*}
where the last inequality follows by the inductive argument proving
both Lemma \ref{lem:indtrans} and Corollary \ref{cor:d_k}.  We thus
obtain the desired bound
\begin{align*}
  \psi_{j+4}(r, \phi) &\geq c \E_{(r, \phi)} \frac{r^{p_{1, j+3}}_\tau}{|\phi_\tau|^{q_{1, j+3}}}\\
  &\geq \frac{c}{(\eta^*)^{q_{1, j+4}}} r^{p_{1, j+4}}\E_{(r, \phi)}
  e^{p_{1, j+4} \tau} > c' r^{p_{1, j+4}}
\end{align*}
for some $c'>0$.  

\subsection{Summary of the construction}

Now that we have finished defining our Lyapunov function on each region $\mathcal{S}_i$, $i =1, \ldots, j+4$, we pause for a moment to provide a summary of the construction up to this point.  In the following sections, we will finish proving Theorem~\ref{thm:lyapfunmadness} by making sure the boundary-flux conditions of Corollary~\ref{I-cor:fluxIto} of Part I \cite{HerzogMattingly2013I} are satisfied and that each $\psi_i$ is indeed a local Lyapunov function on its domain of definition $\mathcal{S}_i$.          

\subsubsection{Regions and Asymptotic Operators}

Recalling that $n=2j+1$ or $n=2j+2$, the analysis of Section~\ref{sec:PiecewiseBeh} uncovered the asymptotic operators $$T_1, \ldots, T_{j+3}, A$$ and corresponding regions where we expect each to approximate well the time-changed Markov generator $L$ as $r\rightarrow \infty$.  The analysis of this section is summarized in the following three tables.

\begin{remark}
Recall that the constants $c_i$, $i=2, \ldots,j+2$, were defined inductively and depend on the Taylor expansion of the coefficients of $L$ at $\theta=0$.  Also recall that $\theta_0^* \in (\frac{\pi}{2n}, \frac{\pi}{n})$ is fixed and the constants $\theta_1^*$, $\phi^*$ and $\eta^*$ are chosen in the way outlined in Remark~\ref{rem:parameterchoice}.   
\end{remark}
   
\begin{center}
\resizebox{\linewidth}{!}{
\begin{tabular}{| c | c | c|} 
\hline Region $\mathcal{S}_i$, $i=0, \ldots, j+2$  & Asymptotic Operator & Coordinates  \\
\hline $\mathcal{S}_0=\{ r\geq r^*, \,
  \theta_0^* \leq |\theta| \leq \tfrac{\pi}{n} \}\cap \mathcal{R}$ & $T_1=r\cos(n\theta) \partial_r + \sin(n\theta) \partial_\theta$  & $r,\, \theta$ \\
\hline  $\mathcal{S}_1 =  \{r\geq r^*, \,
  0<\theta_1^*\leq |\theta| \leq \theta_0^*  \}\cap \mathcal{R}$ & $T_1 =r\cos(n\theta) \partial_r + \sin(n\theta) \partial_\theta$ & $r,\, \theta$ \\
\hline $\mathcal{S}_2=\{ r\geq r^*,\, |\phi_3|\geq \phi^*,\, |\theta| \leq \theta_1^*\}\cap \mathcal{R}$ & $T_2=r\partial_r + n \theta \partial_\theta$ &$ r,\, \theta$\\
\hline $\mathcal{S}_3 = \{r\geq r^*, |\phi_{4}|\geq \phi^*, \, |\phi_3|\leq \phi^* \}\cap \mathcal{R}$ & $T_3 = r \partial_{r}  + (n+1) \phi_{3} \partial_{\phi_{3}}$ & $r, \,\phi_3=r \theta+ c_2$ \\
\hline $\mathcal{S}_4 = \{ r\geq r^* , | \phi_{5}| \geq \phi^*, \, |\phi_4| \leq \phi^* \} \cap \mathcal{R} $ & $T_4 = r \partial_{r} +(n+2) \phi_{4} \partial_{\phi_4}$ & $r, \,\phi_4= r\phi_{3} + c_{3}$\\
\hline $\vdots $ & $\vdots$ & $\vdots$\\ 
\hline $\mathcal{S}_m = \{ r\geq r^* , | \phi_{m+1}| \geq \phi^*, \, |\phi_m| \leq \phi^* \} \cap \mathcal{R} $ & $T_m = r \partial_{r} +(n+m-2) \phi_{m} \partial_{\phi_m}$ & $r, \, \phi_m= r\phi_{m-1} + c_{m-1}$\\
\hline $\vdots $ & $\vdots$ & $\vdots$\\
\hline $\mathcal{S}_{j+2} = \{ r\geq r^* , | \phi_{j+3}| \geq \phi^*, \, |\phi_{j+2}| \leq \phi^* \} \cap \mathcal{R} $ & $T_{j+2} = r \partial_{r} +(n+j) \phi_{j+2} \partial_{\phi_{j+2}}$ & $r, \, \phi_{j+2}= r\phi_{j+1} + c_{j+1}$\\
\hline \end{tabular}}
\end{center}

\vspace{0.1in}

\begin{center}
\resizebox{\linewidth}{!}{
\begin{tabular}{|c|c|c|}
\hline Regions $S_{j+3}$, $\mathcal{S}_{j+4}$, $\textcolor{blue}{n=2j+1}$ & Asymptotic Operator  & Coordinates\\
\hline $\mathcal{S}_{j+3}= \{\,r\geq r^*, \,  \eta^*
r^{-\frac{1}{2}}\leq |\phi_{j+3}| \leq \phi^*  \} \cap \mathcal{R} $
 & $T_{j+3} = r \partial_{r} +(3j+2) \phi_{j+3} \partial_{\phi_{j+3}}$ & $r, \, \phi_{j+3}= r\phi_{j+2} + c_{j+2}$\\
\hline $\mathcal{S}_{j+4}= \{ r\geq r^*, \,  |\phi_{j+3}|\leq \eta^* r^{-1/2} \}\cap \mathcal{R}$ & $ A = r \partial_{r} +(3j+2)
  \phi_{j+3} \partial_{\phi_{j+3}} + \frac{\sigma^2}{2
    r} \partial_{\phi_{j+3}}^2$ &$r, \, \phi_{j+3}= r\phi_{j+2} + c_{j+2}$  \\
\hline
\end{tabular}}
\end{center}

\vspace{0.1in}

\begin{center}
\resizebox{\linewidth}{!}{
\begin{tabular}{| c | c | c|} 
\hline Regions $\mathcal{S}_{j+3}$, $\mathcal{S}_{j+4}$, $\textcolor{blue}{n=2j+2}$  & Asymptotic Operator & Coordinates  \\
\hline $\mathcal{S}_{j+3}= \{\,r\geq r^*, \,  \eta^*
r^{-1}\leq |\phi_{j+3}| \leq \phi^*  \} \cap \mathcal{R} $
 & $T_{j+3} = r \partial_{r} +(3j+3) \phi_{j+3} \partial_{\phi_{j+3}}$ & $r, \, \phi_{j+3}= r\phi_{j+2} + c_{j+2}$\\
\hline $\mathcal{S}_{j+4}= \{ r\geq r^*, \,  |\phi_{j+3}|\leq \eta^* r^{-1} \}\cap \mathcal{R}$ & $ A = r \partial_{r} +[(3j+3)
  \phi_{j+3}  + \gamma_1^{(j+3)} r^{-1}] \partial_{\phi_{j+3}}+ \frac{\sigma^2}{2
    r} \partial_{\phi_{j+3}}^2$ &$r, \, \phi_{j+3}= r\phi_{j+2} + c_{j+2}$  \\
\hline
\end{tabular}}

\end{center}

\subsubsection{Properties of the Lyapunov Function in Each Region}

Below we give a summary of some of the basic properties of our Lyapunov function $\Psi$ on the principal wedge $\mathcal{R}$ in each region $\mathcal{S}_i$.  We recall that the constants $p$, $q$ satisfy $p\in (0, n)$, $q\in (\frac{p}{n},1)$ and the constants $d_{l,m}^\pm$ are determined by the boundary conditions in each Poisson equation.  As mentioned in Remark~\ref{rem:hs}, the constants $h_i^\pm >0$ will be chosen so that both the reflective symmetry \eqref{eqn:pensym} and the boundary-flux conditions are satisfied.       

\vspace{0.1in}
  
\begin{center}
\begin{tabular}{| c | c | c| c|} 
\hline Region $\mathcal{S}$ & Asymptotic Operator $\mathcal{O}$ & $\Psi\vert_{\mathcal{S}} $ & $\mathcal{O}(\Psi \vert_{\mathcal{S}})$ on $\mathcal{S}$ \\
\hline $\mathcal{S}_0$ & $T_1$ &$ r^p$ & $p\cos(n\theta) r^{p} $ \\
\hline $\mathcal{S}_1$ & $T_1$ & eqn. \eqref{eqn:psi_1c} & $-h_1^\pm r^p|\theta|^{-q}$ \\
\hline $\mathcal{S}_2 $ & $T_2$ & $d_{12}^\pm \frac{r^p}{|\theta|^{p/n}} + d_{22}^\pm \frac{r^p}{|\theta|^q}$ & $-h_2^\pm r^p |\theta|^{-q}$ \\
\hline $\mathcal{S}_3$ & $T_3 $ & $\sum_{l=1}^{3} d_{l,3}^\pm \frac{r^{p_{l,3}}}{|\phi|^{q_{l,3}}} $ & $-h_3^\pm r^{p_3} |\phi_3|^{-q_3}$ \\
\hline $\vdots $ & $\vdots $ & $\vdots $ & $\vdots$\\
\hline $\mathcal{S}_m$ & $T_m$ & $\sum_{l=1}^m d_{l,m}^\pm \frac{r^{p_{l,m}}}{|\phi_m|^{q_{l,m}}}$ & $-h_m^\pm r^{p_m} |\phi_m|^{-q_m}$\\
\hline $\vdots $ & $\vdots $ & $\vdots $ & $\vdots$\\
\hline $\mathcal{S}_{j+3}$ & $T_{j+3}$ & $\sum_{l=1}^{j+3} d_{l, j+3}^\pm \frac{r^{p_{l, j+3}}}{|\phi_{j+3}|^{q_{l, j+3}}}$ & $-h_{j+3}^\pm r^{p_{j+3}} |\phi_{j+3}|^{-q_{j+3}}$\\
\hline $\mathcal{S}_{j+4}$ & $A$ & eqn. \eqref{eqn:psifin1} & $-h_{j+4} r^{p_{j+4}}$\\
\hline 
\end{tabular}
\end{center}

\section{Boundary-flux calculations}
\label{sec:boundaryFluxCalculations}
Here show how one can choose the positive parameters $h_i^+$,
$\theta_1^*$, $\phi^*$, $\eta^*$ so that the jump conditions of
Corollary~\ref{I-cor:fluxIto} of Part I \cite{HerzogMattingly2013I} are
also satisfied.  We must be careful to see that all choices are
consistent with Remark \ref{rem:parameterchoice} and Lemma
\ref{lem:h_choice}.
Each boundary has two disjoint parts, implying that we must check two,
although very similar, flux conditions.  We proceed from boundary to
boundary, starting with the:

\subsection{Boundary between  $\mathcal{S}_0$ and $\mathcal{S}_1$}

We begin on the side of the boundary where $\theta>0$.  We must pick the parameters so that  
\begin{align}
\label{eqn:f1p}
\bigg[ \frac{\partial \psi_0}{\partial \theta } -\frac{\partial
  \psi_1^+}{\partial \theta}\bigg]_{\theta=\theta_0^*}\leq 0
\end{align}
for $r\geq r^*$.  By inspection of the formula \eqref{eqn:psi_1c}, we
first note that $\psi_1^+(r,\theta)= r^p \psi_1^+(1,\theta)$.  Using
this and the equation \eqref{caseII:psi_1} defining $\psi_1^+$, observe also that
\begin{align*}
  -h_1^+ r^p |\theta|^{-q} = \frac{\partial
    \psi_1^+}{\partial r}r \cos(n\theta)+ \frac{\partial
    \psi_1^+}{\partial \theta} \sin(n\theta).
\end{align*}
Rearranging this produces
\begin{align}\label{thetaDeriv0}
  \frac{\partial \psi_1^+}{\partial \theta}=- r^p \Big(\frac{p
    \cos(n \theta) \psi_1^+(1,\theta) + h_1^+ |\theta|^{-q}}{ \sin(n \theta)}\Big).
\end{align}
Therefore combining  $\frac{\partial \psi_0}{\partial \theta}=0$ with
\eqref{thetaDeriv0} gives
\begin{align*}
  \bigg[ \frac{\partial \psi_0}{\partial \theta } -\frac{\partial
    \psi_1^+}{\partial \theta}\bigg]_{\theta=\theta_0^*} = r^p
  \Big(\frac{p \cos(n \theta_0^*) \psi_1^+(1,\theta_0^*) + h_1^+
    |\theta_0^*|^{-q}}{ \sin(n \theta_0^*)}\Big).
\end{align*}
Because $\psi_1^+(1, \theta_0^*)=1$, $\sin(n\theta_0^*)>0$ and
$\cos(n\theta_0^*)<0$, picking
\begin{align}
\label{eqn:flux_h1m}
0<h_1^+ < p (\theta_0^*)^q |\cos(n\theta_0^*)| 
\end{align}
results in \eqref{eqn:f1p}.

On the side of the boundary where $\theta <0$, we must see that this
choice of $h_1^+$ also implies
\begin{align}
\label{eqn:f1m}
\bigg[ \frac{\partial \psi_1^-}{\partial \theta } -\frac{\partial
  \psi_0}{\partial \theta}\bigg]_{\theta=-\theta_0^*}\leq 0
\end{align}
for $r\geq r^*$.  By Lemma \ref{lem:h_choice}, we have already picked
$h_1^-$ and we recall that as $\phi^* \rightarrow \infty$,
$h_1^-\rightarrow h_1^+$.  Using the very same process as above,
\eqref{eqn:f1m} is satisfied provided
\begin{align}
0<h_1^- < p (\theta_0^*)^q |\cos(n\theta_0^*)|.
\end{align}  
Therefore, both quantities can be seen to be negative by first picking 
\begin{align}
\label{eqn:flux_h1p}
0<h_1^+ < p (\theta_0^*)^q |\cos(n\theta_0^*)| 
\end{align}
and then taking $\phi^*>0$ sufficiently large.  Note that this is
consistent with the flow of choices outlined in Remark
\ref{rem:parameterchoice}.


\subsection{Boundary between  $\mathcal{S}_1$ and $\mathcal{S}_2$}

We proceed in a similar fashion by first doing the computation on the
side of the boundary where $\theta>0$.  We first show that
\begin{align}
\label{eqn:f2p}
\bigg[ \frac{\partial \psi_1^+}{\partial \theta } -\frac{\partial
  \psi_2^+}{\partial \theta}\bigg]_{\theta=\theta_1^*}\leq 0
\end{align}    
for $r\geq r^*$ whenever $\theta_1^*>0$ is small enough.  Using
$\psi_2^+(r, \theta)=r^p \psi_2^+(1, \theta)$ and the equation
$\psi_2^+$ satisfies, we obtain
\begin{align*}
\frac{\partial \psi_2^+}{\partial \theta} = - r^p \bigg[ \frac{p\psi_2^+(1, \theta) + h_2^+ |\theta|^{-q}}{n \theta} \bigg].
\end{align*}
Since $\psi_1^+(1,\theta_1^*) = \psi_2^+(1, \theta_1^*)$, notice
\begin{align*}
  &\bigg[  \frac{\partial \psi_1^+}{\partial \theta}-\frac{\partial \psi_2^+}{\partial \theta }\bigg]_{\theta=\theta_1^*} \\
  &= -r^p \bigg[- \frac{p \psi_1^+(1, \theta_1^*) + h_2^+
    |\theta_1^*|^{-q}}{n \theta_1^*}+\frac{p
    \cos(n \theta_1^*) \psi_1^+(1,\theta_1^*) + h_1^+ |\theta_1^*|^{-q}}{ \sin(n \theta_1^*)}\bigg]\\
  &= -\frac{ r^p }{|\theta_1^*|^{q+1}} \bigg[ \bigg( \frac{p
    \cos(n\theta_1^*)}{\sin(n\theta_1^*)}- \frac{p}{n\theta_1^*}
  \bigg)\psi_1^+(1, \theta_1^*) |\theta_1^* |^{q+1} + \bigg(
  \frac{h_1^+}{\sin(n\theta_1^*)}-
  \frac{h_2^+}{n\theta_1^*}\bigg)|\theta_1^*|\bigg].
\end{align*}
The expression \eqref{eqn:psi_1c} implies that $\psi_1^+(1,
\theta_1^*) |\theta_1^*|^{q}\rightarrow 0$ as $\theta_1^* \downarrow
0$.  Using this fact and expanding $\sin(n\theta_1^*)$ and
$\cos(n\theta_1^*)$ in power series about $\theta_1^*=0$, we arrive at
the asymptotic formula
\begin{align*}
  &\bigg[ \bigg( \frac{p \cos(n\theta_1^*)}{\sin(n\theta_1^*)}- \frac{p}{n\theta_1^*} \bigg)\psi_1^+(1, \theta_1^*) |\theta_1^* |^{q+1} + \bigg( \frac{h_1^+}{\sin(n\theta_1^*)}- \frac{h_2^+}{n\theta_1^*}\bigg)|\theta_1^*|\bigg]\\
  &=\bigg( \frac{h_1^+}{n}- \frac{h_2^+}{n}\bigg)+ o(1)
\end{align*}
as $\theta_1^*\downarrow 0$.  Therefore, for every choice of 
\begin{align}
h_2^+ < h_1^+
\end{align} 
we may pick $\theta_1^*>0$ sufficiently small so that the flux across
the boundary where $\theta>0$ is negative.  On the side of the
boundary where $\theta<0$, a similar line of reasoning shows that the
choice
\begin{align}
h_2^-<h_1^-
\end{align}  
results in a negative flux for all $\theta_1^*>0$ small.  Recall,
also, that this is consistent with both Remark
\ref{rem:parameterchoice} and Lemma \ref{lem:h_choice} by, after
choosing $\theta_1^*>0$ small, choosing $\phi^*>0$ large.

\subsection{Boundary between $\mathcal{S}_2$ and $\mathcal{S}_3$}
For illustrative purposes, we perform one more boundary-flux estimate before proceeding on to the
general, inductive calculation in the remaining transport regions.  We
begin on the side of the boundary where $\phi_3>0$.  Note that for
$\phi^*>0$ large, it is also true that $\theta>0$ on this side.

As opposed to the previous cases, it is more convenient to use the
explicit expressions obtained for $\psi_2^\pm$ and $\psi_3^\pm$.  In
doing this, we first note that
\begin{align}
  \label{eqn:flux_23_1}\bigg[\frac{\partial \psi_2^+}{\partial
    \theta}- \frac{\partial \psi_3^+}{\partial \theta}
  \bigg]_{\phi_3=\phi^*}\leq C_1(\phi^*) r^{p+q+1} + C_2(\phi^*)
  r^{p+\frac{p}{n}+1}
\end{align}
where 
\begin{align*}
  C_1(\phi^*)=- \frac{ q_2 d_{2,2}^{+}}{|\phi^*- c_1 |^{q_2+1}} +
  \frac{q_{2,3} d_{2,3}^{+}}{|\phi^*|^{q_{2,3}+1}}+
  \frac{q_{3}d_{3,3}^{+}}{|\phi^*|^{q_{3}+1}}
\end{align*}
and $C_2(\phi^*)$ is a constant depending on $\phi^*$.  Our goal is to
see that for $\phi^*$ large enough, $C_1(\phi^*)<0$.  Hence for
$r^*>0$ large enough, the quantity \eqref{eqn:flux_23_1} will also be
negative.  Recalling the dependence of $d_{2,3}^+$ on $\phi^*$ in
Corollary \ref{cor:d_k} and that $q_3>q_2 $, we note that
\begin{align*}
C_1(\phi^*) &= - (\phi^*)^{-q_2-1}\big((q_2 - q_{2,3})d_{2,2}^{ +} +  o(1) \big) 
\end{align*}  
as $\phi^*\rightarrow \infty$. Since $d_{2,2}^{+}$ is positive  and independent of $\phi^*$ and  
\begin{align*}
q_{2,3}= \frac{p_{2,3}}{n+1}=\frac{p_{2,2}+q_{2,2}}{n+1} = \frac{p+q}{n+1}
\end{align*}
where $q_2 = q \in ( \frac{p}{n},1)$, we find that $q_2 > q_{2,3}$.
Hence, choosing $\phi^*>0$ large enough, $C_1(\phi^*)$ is negative.
Thus for $r^*>0$, the quantity on the left-hand side of
\eqref{eqn:flux_23_1} is also negative.  A nearly identical
computation will yield the desired result on the other side of the
boundary.

\subsection{The boundaries between the remaining transport regions}
We now consider the flux across the two boundaries between
$\mathcal{S}_m$ and $\mathcal{S}_{m+1}$ where $k =3,\ldots, j+2$.  As
done in the previous case, we focus on the side of the boundary where
$\phi_{m+1}>0$.  Note, too, with $\phi^*>0$ large enough, $\phi_{m}$
is also positive on that side of the boundary.  Using the expressions
derived in Lemma \ref{lem:indtrans}, realize that
\begin{align}
\label{eqn:psimbf}
&\bigg[\frac{\partial \psi_m^+}{\partial \theta} -\frac{\partial \psi_{m+1}^+}{\partial \theta}\bigg]_{\phi_{m+1}=\phi^*} \leq C_1(\phi^*) r^{p_{m+1} +m-1} + C_2(\phi^*) r^{c}, 
\end{align}
where $c< p_{m+1}+m-1$,
\begin{align*}
  C_1(\phi^*) = - q_{m} \frac{d_{m,m}^{+}}{(\phi^* -c_{m})^{q_m+1}} +
  q_{m,m+1} \frac{d_{m,m+1}^{+}}{(\phi^*)^{q_{m,m+1}+1}}+ q_{m+1}
  \frac{d_{m+1, m+1}^{+}}{(\phi^*)^{q_{m+1}+1}}
\end{align*}
and $C_2(\phi^*)$ is a constant that depends on $\phi^*$.  Using
Corollary \ref{cor:d_k} to write out $d_{m,m+1}^+$ and recalling that
$q_{m+1}>q_m$, note that as $\phi^*\rightarrow \infty$
\begin{align*}
  C_1(\phi^*) = -(\phi^*)^{-q_{m}-1}\bigg( (q_{m}- q_{m,m+1})
  d_{m,m}^+ + o(1) \bigg).
\end{align*} 
Recalling that 
\begin{align*}
q_{m,m+1}= \frac{p_{m,m+1}}{n+m-1}= \frac{p_m +q_m}{n+m-1}
\end{align*}
and $q_m \in ( \frac{p_m}{n+m-2}, 1)$ we see that $q_m > q_{m,m+1}$
giving that $C_1(\phi^*)$ is negative for $\phi^*$ large enough.  Thus
for $r^*>0$ large enough the quantity on the left-hand side of
\eqref{eqn:psimbf} is negative.  A nearly identical result holds on
the other side of the boundary.

\subsection{Boundary between $\mathcal{S}_{j+3}$ and $\mathcal{S}_{j+4}$}
\label{sec:webf}  
In the following computation, we will need to employ
Lemma~\ref{I-lem:exp_webf} of Part I \cite{HerzogMattingly2013I} since the
expression for $\psi_{j+4}$ in \eqref{eqn:psifin1} is not explicit.
Also, we only show the case when $n=2j+1$, $j\geq 0$, as the other
case is similar.

Consider the side of the boundary where $\phi_{j+3}>0$.  Recalling the
notation $G_{a,c}$ introduced in Section~\ref{I-sec:whatisG} of Part I
\cite{HerzogMattingly2013I}, observe that
\begin{align}
  \label{eqn:psifinbf} &\bigg[ \frac{\partial\psi_{j+3} }{\partial
    \theta} -\frac{\partial\psi_{j+4} }{\partial
    \theta}\bigg]_{\eta=\eta^*}\leq C_1( \eta^*) r^{p_{j+4} +
    j+\frac{3}{2}} + C_2(\eta^*) r^c
\end{align} 
for some $c<p_{j+4}+ j+\tfrac{3}{2}$ where
\begin{align*}
  C_1(\eta^*) &= -\bigg[\frac{d_{j+3,j+3}^+}{(\eta^*)^{q_{j+3}}} +
  \frac{h_{j+4}}{p_{j+4}}\bigg] G_{p_{j+4},0}'(\eta^*)-\frac{d_{j+3,
      j+3}^+\, q_{j+3}}{(\eta^*)^{q_{j+3}+1}}
\end{align*} 
and $C_2(\eta^*)$ is a constant which depends on $\eta^*$.  Choosing 
\begin{align*}
h_{j+4}= h p_{j+4} (\eta^*)^{-q_{j+3}} 
\end{align*}
for some $h>0$ and applying the Lemma~\ref{I-lem:exp_webf} of
Part I \cite{HerzogMattingly2013I}, realize that as $\eta^*\rightarrow \infty$
\begin{align*}
C_{1}(\eta^*) = (\eta^*)^{-q_{j+3}-1} \bigg(d_{j+3,j+3}^+\frac{2 p_{j+4} }{3n+2} - d_{j+3, j+3}^+ q_{j+3}+ \frac{2 h}{3n+2} + o(1)\bigg).
\end{align*}    
Using \eqref{eqn:pj4} and the relations $q_{j+3}> p_{j+3}/(n+j+1)$ and $n=2j+1$, we see that 
\begin{align*}
\frac{2p_{j+4}}{3n+2}<q_{j+3}
\end{align*}
Picking $h$ small enough implies that $C_1(\eta^*)<0$ for $\eta^*>0$
large enough.  Therefore choosing $r^*>0$ large enough implies that
the quantity on the left-hand side of \eqref{eqn:psifinbf} is
negative.  A similar result is easily seen to hold on the other side
of the boundary.

\section{Checking the Global Lyapunov Bounds}
\label{sec:CheckingTheGlobalLyapunovBounds}
\subsection{Checking the Local Lyapunov Property}
\label{LLPcheck}

Here we check that the approximating operators $T_1, T_2, \ldots,
T_{j+3}, A$ were chosen correctly so that $\psi_0, \psi_1, \ldots,
\psi_{j+4}$ are actually locally Lyapunov functions on their
respective domains $\mathcal{S}_0, \mathcal{S}_1, \ldots,
\mathcal{S}_{j+4}$.  This simply involves replacing
each asymptotic operator with $L$ and estimating the remainder locally on each region.  Factoring in the time change, the required bound for $\mathcal{L} \psi_i$ on $\mathcal{S}_i$ will then follow easily.  

\subsubsection*{Region $\mathcal{S}_0$}

Since $\psi_0(r, \theta)=r^p$, it is not hard to see that as
$r\rightarrow \infty$, $(r, \theta) \in \mathcal{S}_0$,
\begin{align}
\label{eqn:llpsi_1as}
L \psi_0 (r, \theta) &= p r^{p} \cos(n\theta) + o(r^{p}).  
\end{align}
Since $\cos(n\theta) \leq -c <0$ for $(r, \theta) \in \mathcal{S}_0$,
the relation \eqref{eqn:llpsi_1as} implies that there exist positive
constants $c_0, d_0$ such that
\begin{align*}
L\psi_0(r, \theta) &\leq - c_0 r^{p} + d_0
\end{align*}   
for all $(r, \theta) \in \mathcal{S}_0$.  Undoing the time change, we
see that there exist positive constants $C_0, D_0$ such that on
$\mathcal{S}_0$
\begin{align}
\label{eqn:psi_0b1}
\mathcal{L} \psi_0 (r, \theta)\leq -C_0 r^{p+n} + D_0.  
\end{align}

\subsubsection*{Region $\mathcal{S}_1$}  
First observe that by definition of $\psi_1^\pm$, we see that  
\begin{align*}
L\psi_1^\pm (r, \theta)&= T_1 \psi_1^\pm (r, \theta) + (L-T_1) \psi_1^\pm (r, \theta)\\
&= - h_1^\pm \frac{r^p}{|\theta|^q} + (L-T_1) \psi_1^\pm (r, \theta)  
\end{align*}
on $\mathcal{S}_1$ where the $\pm$ indicates the values of the
functions above when $\theta$ is, respectively, positive or negative.
To bound the remainder term $(L-T_1) \psi_1^\pm(r, \theta)$, recall by
\eqref{eqn:psi_1c} we may write
$\psi_1^\pm(r, \theta) = r^p \psi_1^\pm(1,\theta)$ where the mapping
$\theta \mapsto \psi_{1}^\pm(1, \theta)$ is a smooth and positive
function in $\theta$ for all $0<\theta_1^*\leq |\theta|\leq
\theta_0^*$.  In particular, since $0<\theta_1^*\leq |\theta|\leq
\theta_0^*$ for $(r, \theta) \in \mathcal{S}_1$, we see that as
$r\rightarrow \infty$, $(r, \theta) \in \mathcal{S}_1$,
\begin{align*}
L \psi_1^\pm (r, \theta)&=
- h_1^\pm \frac{r^p}{|\theta|^q} + o(r^p).  
\end{align*}
From this, we obtain the inequality
\begin{align*}
L \psi_1^\pm (r, \theta)&\leq 
- c_1 \frac{r^p}{|\theta|^q} + d_1  
\end{align*}
for some constants $c_1, d_1>0$, for all $(r, \theta) \in
\mathcal{S}_1$.  Undoing the time change, we see that there exist
constants $C_1, D_1>0$ such that on $\mathcal{S}_1$
\begin{align}
\label{eqn:psi_1b1II}
\mathcal{L} \psi_1^\pm (r, \theta)&\leq 
- C_1 \frac{r^{p+n}}{|\theta|^q} + D_1 .   
\end{align}

\subsubsection*{Region $\mathcal{S}_2$}

By definition of $\psi_2^\pm$, first observe that on $\mathcal{S}_2$
\begin{align*}
  L \psi_2^\pm (r, \theta) &= T_2 \psi_2^\pm (r, \theta) +
  (T_1-T_2)\psi_2^\pm(r, \theta) + (L-T_1) \psi_2^\pm (r, \theta)\\ 
  &= -h_2^\pm \frac{r^p}{|\theta|^q} +(T_1-T_2)\psi_2^\pm (r, \theta)
  + (L-T_1) \psi_2^\pm (r, \theta) .
\end{align*}
Using the Taylor expansions for $\sin(n\theta)$ and $\cos(n\theta)$
notice that there exists a constant $C>0$ so that
\begin{align*}
&(T_1-T_2)\psi_2^\pm (r, \theta)\\
&\leq C \theta^2\bigg[\bigg(
  |\theta_1^*|^{\frac{p}{n}} \psi_1^\pm (1,\pm\theta_1^*) + h_2^\pm
  \frac{|\theta_1^*|^{\frac{p}n-q}}{qn -p}
  \bigg) \frac{r^p}{|\theta|^{p/n}} +  \frac{h_2}{qn-p} \frac{r^p}{|\theta|^q}\bigg]\\
  &\leq  C (\theta_1^*)^2\bigg[\bigg(
  |\theta_1^*|^{\frac{p}{n}} \psi_1^\pm(1,\pm\theta_1^*) + h_2^\pm
  \frac{|\theta_1^*|^{\frac{p}n-q}}{qn -p}
  \bigg) \frac{r^p}{|\theta|^{p/n}} +  \frac{h_2^\pm}{qn-p} \frac{r^p}{|\theta|^q}\bigg] 
\end{align*}     
for all $(r, \theta) \in \mathcal{S}_2$.  Since $\psi_1^\pm(1,
\pm\theta_1^*)=O((\theta_1^*)^{-1})$ as $\theta_1^* \downarrow 0$, it follows that for all $\theta_1^*>0$ sufficiently small
\begin{align*}
(T_1-T_2)\psi_2^\pm (r, \theta)
&\leq \frac{h_2^+ \wedge h_2^-}{2} \frac{r^p}{|\theta|^q} 
\end{align*} 
for all $(r, \theta) \in \mathcal{S}_2$.  Therefore, for all  $\theta_1^*>0$ small enough we have the bound    
\begin{align*}
L \psi_2^\pm (r, \theta) &\leq 
-\frac{h_2^\pm}{2} \frac{r^p}{|\theta|^q} + (L-T_1) \psi_2^\pm(r, \theta)
\end{align*} 
on $\mathcal{S}_2$.

To control the remaining term, first recall the
definition of the region $\mathcal{S}_2$.  Notice then that there
exists a positive constant $C=C(\phi^*, r^*)$ such that on
$\mathcal{S}_2$
\begin{align*}
(L-T_1) \psi_2^\pm(r, \theta) &\leq C(r^*, \phi^*) \frac{r^p}{|\theta|^q}
\end{align*}  
where $C(r^*, \phi^*)>0$ satisfies the following property:  For every $\epsilon >0$, there exists $K>0$ such that for $\phi^* \wedge r^* \geq K$
\begin{align*}
C(r^*, \phi^*) \leq \epsilon.
\end{align*}
Hence we may pick $K>0$ large enough so that for $\phi^*\wedge r^* \geq K$
\begin{align*}
L \psi_2^\pm(r, \theta) & \leq - c_2 \frac{r^p}{|\theta|^q} + d_2
\end{align*}  
for all $(r, \theta) \in \mathcal{S}_2$.  Undoing the time change, we then determine the existence of positive constants $C_2, D_2$ such that on $\mathcal{S}_2$
\begin{align}
\label{eqn:psi_2b1}
\mathcal{L} \psi_2^\pm \leq -C_2 \frac{r^{p+n}}{|\theta|^q} + D_2
\end{align}  

\begin{remark}
\label{rem:dasym}
Before proceeding onto the remaining regions, it is important to note that Corollary \ref{cor:d_k} and the relations \eqref{eqn:constord} imply that for $m \in \{ 3,4, \ldots, j+3\}$ and $l\in \{1,2, \ldots, m-1\}$:
\begin{align}
\label{eqn:asymphi}
d_{l,m}^\pm =O( (\phi^*)^{q_{l,m}-q_l}) \, \text{ as } \,\phi^* \rightarrow \infty  
\end{align}
where the constant in the asymptotic formula above is independent of $\theta_1^*$, $\eta^*$ and $r^*$.  The above fact will be helpful when controlling remainder terms in what follows.
\end{remark}

\subsubsection*{Region $\mathcal{S}_m$, $m=3, \ldots, j+2$}
In the following computations, it is helpful to consult
\eqref{eqn:asyLm} and the remainder estimate immediately below it.
For lack of better notation, we will also use $\psi^\pm_m$ to denote
the function of $(r, \theta)$ determined by $\psi^\pm_m =\psi_m^\pm(r,
\phi)$.  Let
\begin{align}
\label{eqn:N2}
N = \bigg(\frac{\sigma^2}{2 r^n} \partial_r^2\bigg)_{(r, \phi)}
\end{align}
and write 
\begin{align*}
L\psi^\pm_m(r, \theta) & = L_{(r, \phi)} \psi_{m}^\pm(r, \phi)\\
&= T_m \psi_{m}^\pm(r, \phi) + (L_{(r, \phi)} - T_m -N) \psi_{m}^\pm(r, \phi) + N \psi_{m}^\pm(r, \phi)\\
&= - h_m^\pm \frac{r^{p_m}}{|\phi|^{q_m}} + (L_{(r, \phi)} - T_m-N ) \psi_{m}^\pm(r, \phi) + N \psi_{m}^\pm(r, \phi)
\end{align*}
where each equality above is valid on $\mathcal{S}_m$.  We first focus
on estimating $(L_{(r, \phi)}- T_m-N) \psi_m^\pm(r, \phi)$ for all
$(r, \phi)$ such that $(r, \theta(r, \phi)) \in
\mathcal{S}_m$.  Using the simple nature of the expression derived for
$\psi_m^\pm$ as well as Remark \ref{rem:dasym}, we note that the bound
\begin{align*}
|(L_{(s, \phi)}- T_m-N) \psi_{m}^\pm(r, \phi)| \leq C_1(r^*, \phi^*) \frac{r^{p_m}}{|\phi|^{q_m}} 
\end{align*}      
holds on $\mathcal{S}_m$ where $C_1(r^*, \phi^*)$ is a constant which can be chosen to be a small as we wish by first picking $\phi^*>0$ large and then picking $r^*>0$ large.  Therefore, making such choices we see that 
\begin{align*}
L\psi^\pm_m(r, \theta) \leq - \frac{h_m^\pm}{2}\frac{r^{p_m}}{|\phi|^{q_m}} + N \psi_{m}^\pm(r, \phi).
\end{align*}
To estimate the remaining term $N \psi_{m}^\pm(r, \phi)$, first recall that 
\begin{align*}
& \phi = r^{m-2} \theta + r^{m-3} c_2+ \cdots + c_{m-1}
\end{align*}
and so we may write
\begin{align*}
N= \frac{\sigma^2}{2 r^n} \bigg(\partial_r + [(m-2)  r^{-1} \phi + r^{-1}Y(r)] \partial_\phi \bigg)^2
\end{align*}
where $Y$ is a polynomial in $r$ of degree at most $m-3$.  Using
this allows us to obtain a similar bound
\begin{align*}
|N\psi_m^\pm(s, \phi)| \leq C_2(r^*, \phi^*) \frac{s^{p_m}}{|\phi|^{q_{m}}} 
\end{align*} 
where $C_2(r^*,\phi^*)$ is a constant satisfying the same property as $C_1(r^*, \phi^*)$ above.  Hence, we may choose $\phi^*>0$ large enough and then $r^*>0$ large enough so that for some positive constants $c_m, d_m$
\begin{align*}
L\psi^\pm_m(r, \theta) & \leq -c_m \frac{r^{p_m}}{|\phi|^{q_m}} +d_m .
\end{align*}
for all $(s, \phi)$ with $(r, \theta(r, \phi)) \in \mathcal{R}_m$.  Undoing the time change, we see that there exist positive constants $C_m, D_m$ such that on $\mathcal{S}_m$
\begin{align}
\label{L:psi_mb}
\mathcal{L} \psi_m^\pm(r, \theta) \leq -C_m \frac{r^{p_m+n}}{|\phi|^{q_m}} + D_m.  
\end{align}   

\subsubsection*{Region $\mathcal{S}_{j+3}$}
Here, we again use $\psi_{j+3}^\pm$ to also denote the function of
$(r, \theta)$ defined by $\psi_{j+3}^\pm(r, \phi)$.  The estimate in
this region is nearly identical to the one that precedes it, except that the lower bound in the definition of $\mathcal{S}_{j+3}$ is slightly different depending on the parity of $n$.  Nevertheless, we may essentially trace
through the inequalities in the previous case to see that the same estimates hold, except that the constants $C_1$ and $C_2$ in this case depend on, in addition to $\phi^*$ and $r^*$, $\eta^*$.  We may, however, still pick the parameters according to Remark \ref{rem:parameterchoice} to arrive at the desired estimate on $\mathcal{S}_{j+3}$
\begin{align}
\label{L:Lpsij3b}
\mathcal{L} \psi_{j+3}^\pm(r, \theta) \leq - C_{j+3} \frac{r^{p_{j+3}+n}}{|\phi|^{q_{j+3}}} + D_{j+3} 
\end{align}
for some positive constants $C_{j+3}, D_{j+3}$. 

\subsubsection*{Region $\mathcal{S}_{j+4}$}
The estimates in this case are also very similar to the previous ones.  In fact, fact they are a little easier since we include more terms of $L$ in the approximating operator $A$. In what follows, we again
use $\psi_{j+4}$ to denote the function of $(r, \theta)$ determined by
$\psi_{j+4}(r, \phi)$ where $(r, \phi)=(r, \phi_{j+3})$.  Notice
that for all $(r, \phi)$ with $(r, \theta(r, \phi) ) \in
\mathcal{S}_{j+4}$
\begin{align*}
  L \psi_{j+4}(r, \theta) &= L_{(r, \phi)} \psi_{j+4}(r, \phi)\\
  &= A \psi_{j+4}(r, \phi) + (L_{(r, \phi)}-A-N) \psi_{j+4}(r, \phi) + N \psi_{j+4}(r, \phi)\\
  & = - h_{j+4} r^{p_{j+4}} + (L_{(r, \phi)}-A-N) \psi_{j+4}(r, \phi)
  + N \psi_{j+4}(r, \phi)
\end{align*}   
where the operator $N$ was defined in \eqref{eqn:N2}.  Using the very same ideas in the previous two regions and recalling that
$\gamma^{(j+3)}_1 r^{-1} \partial_{\phi}$ is included in $A$ when
$n=2j+2$, we note that for all $(r, \phi)$ with $(r,
\theta(r, \phi)) \in \mathcal{S}_{j+4}$
\begin{align*}
 |(L_{(r, \phi)}-A-N) \psi_{j+4}(r, \phi)| + |N \psi_{j+4}(r, \phi)| \leq C( r^*, \eta^*, \phi^*) r^{p_{j+4}} 
\end{align*}  
where $C(r^*, \eta^*, \phi^* )$ is a constant which can be made arbitrarily small by picking $r^*, \eta^*, \phi^*$ according to Remark \ref{rem:parameterchoice}.  Thus choosing these parameters accordingly, we see that there exist constants $c_{j+4}, d_{j+4}>0$
 such that
\begin{align*}
L \psi_{j+4}(r, \theta) & \leq - c_{j+4} r^{p_{j+4}} + d_{j+4}
\end{align*}
for all $(s, \phi)$ with $(r(s, \phi), \theta(s, \phi)) \in
\mathcal{S}_{j+4}$.  Undoing the time change, we determine the
existence of positive constants $C_{j+4}, D_{j+4}$ such that on
$\mathcal{S}_{j+4}$
\begin{align}
\label{eqn:psij4b}
\mathcal{L} \psi_{j+4}(r, \theta) \leq -C_{j+4} r^{p_{j+4}+n} + D_{j+4}.
\end{align}

\subsection{Checking the specific Lyapunov bounds}
Here we show that we can pick parameters so that the conditions of
Proposition \ref{I-prop:localimglobal}  of Part I \cite{HerzogMattingly2013I} are satisfied when $\gamma \in (n, 2n)$ is
arbitrary.  By the estimates of the previous section, all we need is the following proposition.

\begin{proposition}\label{prop:existlu}
There exist positive constants $l_i$, $u_i$ such that
\begin{xalignat*}{2}
  &\psi_0(r, \theta) =r^p && (r, \theta) \in \mathcal{S}_0\\
  l_1 r^p \leq &\psi_1(r, \theta) \leq u_1 r^p \qquad\qquad &&(r, \theta) \in \mathcal{S}_1\\
  l_2 \frac{r^p}{|\theta|^{\frac{p}{n}}} \leq  &\psi_2(r, \theta) \leq u_2 \frac{r^p}{|\theta|^{q}} && (r, \theta) \in \mathcal{S}_2\\
  l_m \frac{r^{p_{1,m}}}{|\phi|^{q_{1,m}}}\leq &\psi_{m}(r, \phi) \leq u_m \frac{r^{p_m}}{|\phi|^{q_m}} &&(r, \theta(r, \phi)) \in  \mathcal{S}_m \\
  l_{j+4} r^{p_{1, j+4}} \leq& \psi_{j+4}(r, \phi) \leq u_{j+4}
  r^{p_{j+4}} && (r, \theta(r, \phi)) \in \mathcal{S}_{j+4}
\end{xalignat*}
where $m=3, \ldots, j+3$ and $(r, \phi)=(r, \phi_{j+3})$ in the last inequality.  
\end{proposition}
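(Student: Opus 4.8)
The plan is to establish each inequality by consulting the explicit (or semi-explicit) formulas for the $\psi_i$ already derived in Section~\ref{sec:constructionPsigen}, then extracting the dominant term over the relevant region. The statement on $\mathcal{S}_0$ is definitional, and the bounds on $\mathcal{S}_1$ follow immediately from the observation $\psi_1^\pm(r,\theta) = r^p\psi_1^\pm(1,\theta)$ together with the fact that $\theta \mapsto \psi_1^\pm(1,\theta)$ is smooth, positive, and bounded above and below on the compact angular range $\theta_1^* \leq |\theta| \leq \theta_0^*$ (which in turn uses the formula \eqref{eqn:psi_1c} and that $\psi_1^\pm>0$, as noted right after that display). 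For $\mathcal{S}_2$, I would use the explicit two-term formula $\psi_2^\pm(r,\theta) = d_{12}^\pm r^p|\theta|^{-p/n} + d_{22}^\pm r^p|\theta|^{-q}$: since $q > p/n$ and $|\theta| \leq \theta_1^* < 1$, the $|\theta|^{-q}$ term dominates for the upper bound, while \eqref{eqn:psi2ineq} already gives the lower bound $\psi_2^\pm \geq |\theta_1^*|^{p/n}\psi_1^\pm(1,\pm\theta_1^*)\, r^p|\theta|^{-p/n}$, with the coefficient positive once $\theta_1^*$ is fixed.

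For the inductive range $m = 3, \ldots, j+3$, I would lean on Lemma~\ref{lem:indtrans}, which writes $\psi_m^\pm(r,\phi) = \sum_{l=1}^m d_l^\pm r^{p_{l,m}}|\phi|^{-q_{l,m}}$ with all $d_l^\pm$ of a sign making $\psi_m>0$ on $\mathcal{S}_m$, and on Remark~\ref{rem:constord}, which gives the strict orderings $p_{1,m} < \cdots < p_{m,m}$ and $q_{1,m} < \cdots < q_{m,m} < 1$. For the upper bound, I would compare each summand $d_l^\pm r^{p_{l,m}}|\phi|^{-q_{l,m}}$ against the top term $r^{p_m}|\phi|^{-q_m}$ on $\mathcal{S}_m$: since $p_{l,m} \leq p_m$ and, crucially, $|\phi| = |\phi_m| \leq \phi^*$ is bounded above (and bounded below by a negative power of $r$), one needs $r^{p_{l,m} - p_m}|\phi|^{q_m - q_{l,m}}$ to stay bounded on the region, which follows from the region geometry $|\phi_{m+1}| \geq \phi^*$ translated back through $\phi_{m+1} = r\phi_m + c_m$, exactly the kind of scaling relation recorded in Section~\ref{I-sec:scalingRelationDifRegion} of Part~I. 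For the lower bound, the inductive argument inside the proof of Lemma~\ref{lem:indtrans} already shows $\psi_m^\pm(r,\phi) \geq c\, r^{p_{1,m}}|\phi|^{-q_{1,m}}$, so that inequality is essentially free. Finally, on $\mathcal{S}_{j+4}$, the upper bound comes from \eqref{eqn:psifin1} together with the exponential-moment control $\E_{(r,\phi)} e^{p_m\tau}$ — which is finite and bounded on $\mathcal{S}_{j+4}$ because all $p_{m,j+4} < \tfrac{3n+2}{2}$ (established just above the Proposition via Lemma~\ref{I-lem:exp_webf}) — and the dominance $p_{j+4} = p_{j+4,j+4} \geq p_{m,j+4}$ from Remark~\ref{rem:constord}; the lower bound $\psi_{j+4} \geq c' r^{p_{1,j+4}}$ was already derived verbatim at the end of Section~\ref{sec:constructionPsigen}.

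The bookkeeping obstacle — and the main point requiring care — is the upper bound on $\mathcal{S}_m$: one must verify that every lower-order summand in $\psi_m^\pm$ is genuinely controlled by $r^{p_m}|\phi|^{-q_m}$ \emph{uniformly over the whole region}, not just pointwise, and this hinges on simultaneously tracking the $r$-growth and the $|\phi|$-decay through the nested changes of coordinates $\phi_{l} \mapsto \phi_{l+1} = r\phi_l + c_l$. Concretely, on $\mathcal{S}_m$ one has both $|\phi_m| \leq \phi^*$ and $|\phi_{m+1}| = |r\phi_m + c_m| \geq \phi^*$, which pins $|\phi_m|$ between $\phi^*$ and something of order $(\phi^* - |c_m|) r^{-1}$; feeding these two-sided bounds into $r^{p_{l,m}-p_m}|\phi_m|^{q_m - q_{l,m}}$ and using $p_m - p_{l,m} = \sum (\text{positive } q\text{'s})$ together with the defining relations \eqref{eqn:ind} and \eqref{eqn:rho_eta} shows the exponent arithmetic works out so the ratio is $O(1)$ (in fact $O((\phi^*)^{\text{something}})$, absorbed into $u_m$). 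Once the parameters $\theta_1^*, \phi^*, \eta^*, r^*$ are chosen in the consistent order of Remark~\ref{rem:parameterchoice}, all the constants $l_i, u_i$ can be taken fixed, completing the proof.
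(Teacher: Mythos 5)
Your proposal is correct and follows the same route as the paper, whose entire proof is the observation that the lower bounds were already established during the construction (the inductive argument in Lemma~\ref{lem:indtrans} and the final estimate for $\psi_{j+4}$) and the upper bounds follow directly from the explicit expressions for each $\psi_i$. The one step you flag as delicate --- the upper bound on $\mathcal{S}_m$ --- is in fact immediate: since $p_{l,m}\le p_m$ and $q_{l,m}\le q_m$, each factor $r^{p_{l,m}-p_m}$ and $|\phi|^{q_m-q_{l,m}}$ is separately bounded on $\mathcal{S}_m$ using only $r\ge r^*$ and $|\phi_m|\le\phi^*$, so the two-sided pinning of $|\phi_m|$ via $|\phi_{m+1}|\ge\phi^*$ is not needed.
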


\begin{proof}[Proof of Proposition~\ref{prop:existlu}]
  The lower bounds have already been established and the upper bounds
  follow directly from the expressions derived for each $\psi_i$.
\end{proof}

 \section{Optimality}
\label{sec:optimality}

Recalling that $\mu$ denotes the invariant measure of
\eqref{eqn:polyZ} and $\rho$ its density with respect to Lebesgue measure on $\R^2$, in this section we prove Theorem
\ref{thm:limit_bound}.  Before giving the
precise details, let us give the intuitive idea behind the proof.  To study
the process $z_t$ defined by \eqref{eqn:polyZ} is a neighborhood of
the point at infinity, it is convenient to make a substitution which
maps the point at infinity to $0$ and $0$ to the point at infinity.
There are many changes of variables which accomplish precisely this,
but only one gives the desired bound on the invariant density:
$w_t=1/z_t^n$.  The reason for this choice is that the drift
of the process $w_t$ is non-zero and bounded at $w=0$.  In particular,
the invariant measure for the process $w_t$ cannot possibly vanish nor
can it blow up at $w=0$.  By construction, $|z|^{2n+2} \rho(z,
\bar{z})$ when written in the variables $(w, \bar{w})$ is this
invariant measure; hence, by positivity of this quantity as
$|z|\rightarrow \infty$, Theorem \ref{thm:limit_bound} would then
follow.
However in the proof of Theorem \ref{thm:limit_bound}, we will never
actually make the substitution $w_t=1/z_t^n$ described above because
the inversion of the mapping $w=1/z^n$ is multi-valued and this leads to
unnecessary complications.  Nonetheless, this transformation can be seen to motivate many of the manipulations performed.

In the proof of Theorem \ref{thm:limit_bound} we will need the following result which is a corollary of the proof of Theorem \ref{thm:lyapfunmadness}.  The result gives uniform bounds in the initial condition on return times to large compact sets of the process, time-changed to accommodate the ``substitution" $w_t=1/z_t^n$, determined by the adjoint $\mathscr{L}^*$.     
\begin{corollary}
\label{cor:Lstard}
Consider the stochastic differential equation on $\C \setminus \{ 0\}$
\begin{align*}
dz_t^* = -|z_t^*|^{-(n-1)}\bigg( \mathcal{P}(z_t^*, \overline{z_t^*}) +
\frac{\sigma^2(n+1)}{\overline{z^*_t}}\bigg) \, dt +
\sigma |z_t^*|^{-\frac{n-1}{2}}\, dB_t 
\end{align*}
where $\mathcal{P}(z, \bar{z})=z^{n+1}+ F(z, \bar{z})$, and $F$, $n$, $\sigma$ and $B_t$ are as in equation \eqref{eqn:polyZ}.  For
$\gamma >0$, let $S_\gamma = \inf\{t>0\, : \, |z_t^*|\leq \gamma \}$.
Then the stopped process $z_{t\wedge S_\gamma}^*$ is non-explosive and for each $\gamma >0$ sufficiently large we have:
\begin{align*}
\sup_{z\in \C\setminus \{0 \}} \PP_z [S_\gamma = \infty]=0.
\end{align*}  
Additionally, for each $t, \epsilon >0$ there exists $\gamma >0$ large enough so that 
\begin{align*}
\inf_{z\in \C\setminus \{0\}} \PP_z[S_\gamma \leq t ]\geq 1-\epsilon.  
\end{align*}
\end{corollary}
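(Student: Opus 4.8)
The plan is to recognize $z^{*}_{t}$, up to the position-dependent time change and the normalization of Remark~\ref{rem:ae1}, as a process of exactly the form \eqref{eqn:polyZ}, and then to transfer the Lyapunov construction of Theorem~\ref{thm:lyapfunmadness} to it. Writing the generator of $z^{*}_{t}$ as $\mathscr{A}^{*}=|z|^{-(n-1)}\mathscr{B}$, where $\mathscr{B}$ is the generator of the constant-noise SDE $d\zeta_{t}=\bigl[-\zeta_{t}^{\,n+1}-F(\zeta_{t},\bar\zeta_{t})-\sigma^{2}(n+1)/\bar\zeta_{t}\bigr]\,dt+\sigma\,dB_{t}$ on $\C\setminus\{0\}$, a rotation $\zeta\mapsto c\zeta$ with $c^{n}=-1$ turns $\mathscr{B}$ into the generator of \eqref{eqn:polyZ} with ``$F$'' replaced by a function that is $\mathcal{O}(|\zeta|^{n})$ at infinity and smooth on $\C\setminus\{0\}$ (the rotation fixes the $1/\bar\zeta$ term since $|c|=1$, and $e^{i\psi}B_{t}$ is again a complex Brownian motion). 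The only discrepancy with \eqref{eqn:polyZ} is the mild singularity at $\zeta=0$ coming from $1/\bar\zeta$, and since every assertion of the corollary concerns $z^{*}_{t}$ only up to $S_{\gamma}$, which confines the process to $\{|z|>\gamma\}$, this singularity is irrelevant. Hence the asymptotic analysis and construction of Sections~\ref{sec:PiecewiseBeh}--\ref{sec:CheckingTheGlobalLyapunovBounds} apply verbatim, producing for each $\gamma'\in(n,2n)$ and small $\delta>0$ a function $\Psi^{*}$ and a constant $\gamma_{0}>0$ such that $(\Psi^{*},(\Psi^{*})^{1+\delta})$ and $(\Psi^{*},|z|^{\gamma'-n+1})$ are Lyapunov pairs for $z^{*}_{t}$ on $\{|z|\ge\gamma_{0}\}$ in the sense of Definition~\ref{def:Lyapunov} (the extra factor $|z|^{-(n-1)}$ only rescales the control function, and the flux terms, being multiplied by a positive factor, carry over), with $c|z|^{\gamma'-n}\le\Psi^{*}(z)\le d|z|^{\gamma'-n+n/2+1}$ there.

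The first two assertions are then the standard consequences of possessing a Lyapunov pair, exactly as in Section~\ref{I-sec:conseq_lyap} of Part~I~\cite{HerzogMattingly2013I}. For non-explosion of $z^{*}_{t\wedge S_{\gamma}}$ one takes $\gamma\ge\gamma_{0}$, uses $\mathscr{A}^{*}\Psi^{*}\le b$ on $\{|z|>\gamma_{0}\}$ together with $\Psi^{*}\to\infty$, and applies Dynkin's formula and Fatou's lemma along $\tau_{N}=\inf\{t:|z^{*}_{t}|\ge N\}$ to get $\PP_{z}[\tau_{N}<t\wedge S_{\gamma}]\le(\Psi^{*}(z)+bt)/\inf_{|w|=N}\Psi^{*}(w)\to0$ as $N\to\infty$. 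For $\sup_{z}\PP_{z}[S_{\gamma}=\infty]=0$ one first enlarges $\gamma_{0}$ so that $m|z|^{\gamma'-n+1}-b\ge\delta_{0}>0$ on $\{|z|>\gamma_{0}\}$; then $\mathscr{A}^{*}\Psi^{*}\le-\delta_{0}$ there, so $\delta_{0}\,\E_{z}[t\wedge S_{\gamma}]\le\Psi^{*}(z)$ for every $t$ and hence $\E_{z}S_{\gamma}\le\Psi^{*}(z)/\delta_{0}<\infty$; thus $\PP_{z}[S_{\gamma}<\infty]=1$ for every fixed $z\in\C\setminus\{0\}$, and the supremum over $z$ of the complementary probabilities, being a supremum of zeros, is zero.

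The third assertion is the genuinely delicate one, since the bound $\E_{z}S_{\gamma}\le\Psi^{*}(z)/\delta_{0}$ degenerates as $|z|\to\infty$ and so cannot by itself furnish uniformity over the unbounded set $\{|z|>\gamma\}$; here one has to re-examine the dynamics near infinity. Fix $t,\epsilon>0$; the case $|z|\le\gamma$ is trivial. On $\{|z|>\gamma\}$ the drift $-|z|^{-(n-1)}z^{n+1}$ of $z^{*}_{t}$ has radial part $\approx-|z|^{2}\cos(n\theta)$, so as long as $\theta$ stays in a ``descending'' sector $\{\cos(n\theta)\ge c_{1}>0\}$ — which it does once it is there, because the angular drift $-|z|\sin(n\theta)$ then pushes $\theta$ toward $0$ — the radius is driven down to level $\gamma$ in $z^{*}$-time at most $(c_{1}\gamma)^{-1}$, a bound independent of the (possibly enormous) starting radius; moreover on this very short time scale the diffusion coefficient $\sigma|z|^{-(n-1)/2}\le\sigma\gamma^{-(n-1)/2}$ displaces the process only by $\mathcal{O}(\gamma^{-n/2})$, so for $\gamma$ large the hitting time of $\{|z|\le\gamma\}$ from any point of the descending sector is $\le t$ with probability $\ge1-\epsilon/2$. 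The remaining starting configurations lie in a neighbourhood of the explosive ray $\theta=\pm\frac{\pi}{n}$, and for these the plan is to invoke the Lyapunov estimate $\mathscr{A}^{*}\Psi^{*}\le-m|z|^{\gamma'-n+1}$ on $\{|z|>\gamma_{0}\}$: because the whole dynamics of $z^{*}_{t}$ accelerates with $|z|$, this forces the angular coordinate to leave the neighbourhood of the explosive ray, hence to enter the descending sector, within $z^{*}$-time $\le t/2$ with probability $\ge1-\epsilon/2$, uniformly in the starting point, once $\gamma$ is large enough. Splicing the two estimates by the strong Markov property at the first entrance into the descending sector yields $\inf_{z}\PP_{z}[S_{\gamma}\le t]\ge1-\epsilon$. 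The hard part will be making this last step fully rigorous — in particular the uniform-in-starting-radius escape from the explosive ray before the radius can blow up — and it is precisely here that one genuinely re-runs a localized, quantitative version of the noise-induced-stabilization argument behind Theorem~\ref{thm:lyapfunmadness} rather than merely quoting its conclusion.
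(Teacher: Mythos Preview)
Your reduction is exactly the paper's: rotate by $c$ with $c^{n}=-1$, observe that the generator of the rotated process in polar coordinates is $M=rL$ with $L$ of the form \eqref{eqn:genL}, and carry over the Lyapunov construction of Sections~\ref{sec:PiecewiseBeh}--\ref{sec:CheckingTheGlobalLyapunovBounds} verbatim (the $1/\bar z$ term is harmless outside a compact set and only contributes to the lower-order $P,Q$ of \eqref{eqn:PQ}). Your derivation of non-explosion and of $\PP_{z}[S_{\gamma}<\infty]=1$ from this is fine.

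The gap is in the third assertion. You correctly observe that the bound $\E_{z}S_{\gamma}\le \Psi^{*}(z)/\delta_{0}$, coming from the pair $(\Psi^{*},|z|^{\gamma'-n+1})$, is not uniform in $z$, and you then launch into a pathwise sector argument that you yourself flag as non-rigorous. But you have already built the \emph{stronger} pair $(\Psi^{*},(\Psi^{*})^{1+\delta})$, and that is precisely what delivers uniformity without any further dynamical analysis. The superlinear inequality $M\Psi^{*}\le -C(\Psi^{*})^{1+\delta}+D$ forces the process to ``come down from infinity'': comparing with the scalar ODE $\dot y=-Cy^{1+\delta}+D$, whose solutions satisfy $y(t)\le (C\delta t)^{-1/\delta}\vee (2D/C)^{1/(1+\delta)}$ for all $t>0$ \emph{regardless of $y(0)$}, one gets a bound on $\E_{z}\Psi^{*}(z^{*}_{t\wedge S_{\gamma}})$ that is uniform in $z$ for each fixed $t>0$, and Markov's inequality plus the strong Markov property then convert this into $\inf_{z}\PP_{z}[S_{\gamma}\le t]\ge 1-\epsilon$ for $\gamma$ large. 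This is exactly the content of Proposition~\ref{I-thm:entrance_time} of Part~I~\cite{HerzogMattingly2013I}, which the paper simply invokes. Your heuristic about descending sectors and escape from the explosive ray is not needed; the whole point of the $(\Psi,\Psi^{1+\delta})$ pair is that it packages that mechanism once and for all.
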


\begin{proof}[Proof of Corollary \ref{cor:Lstard}]
  Let $a\in \C$ be such that $a^n=-1$ and consider the process $a
  z_{t\wedge S_\gamma}^*$.  Our goal is to show that $az_{t \wedge S_\gamma}$ has a Lyapunov pair $(\Psi, \Psi^{1+\delta})$ for some $\delta >0$.  Non-explosivity will follow from Lemma \ref{I-l:existInvM} of Part I \cite{HerzogMattingly2013I}, and the remaining conclusions concerning the entrance times $S_\gamma$, $\gamma >0$, will follow from Proposition \ref{I-thm:entrance_time} of  \cite{HerzogMattingly2013I}.  Note first that the generator $M$ of $az_{t\wedge S_\gamma}^*$ is of the following form when written in polar coordinates $(r, \theta)$:
\begin{align*}
M = r L
\end{align*}   
where $L$ is of the form \eqref{eqn:genL}.  Hence, because of the form of $M$, our Lyapunov function $\Psi$ will be the same one constructed in Section \ref{sec:constructionPsigen}.  Upon replacing $n$ by $1$ in the inequalities \eqref{eqn:psi_0b1}, \eqref{eqn:psi_1b1II}, \eqref{eqn:psi_2b1}, \eqref{L:psi_mb}, \eqref{L:Lpsij3b}, and \eqref{eqn:psij4b}, and then applying Proposition
6.8, we see that the chosen $\Psi$ has the required local Lyapunov estimate for $(r, \theta) \in S_m$, $m=0,1, \ldots, j+4$: 
\begin{align*}
(M \Psi)(r, \theta) \leq - C\Psi(r,\theta)^{1+\delta} + D 
\end{align*}       
where $C,D$ and $\delta $ are positive constants.  Since the boundary flux contributions will have the appropriate sign, the result now follows.        
\end{proof}

\begin{proof}[Proof of Theorem \ref{thm:limit_bound}]
  First note that the generator $\mathscr{L}$ of the process $z_t$ has the following form when written in the variables $(z, \bar{z})$:
\begin{align*}
\mathscr{L} = \mathcal{P}(z, \bar{z}) \partial_z +
\overline{\mathcal{P}(z, \bar{z})} \partial_{\bar{z}} +
\sigma^2 \partial_z \partial_{\bar{z}} 
\end{align*}
where $\mathcal{P}(z, \bar{z})= z^{n+1} + F(z, \bar{z})$.  Let
$\mathscr{L}^*$ denote the formal adjoint of $\mathscr{L}$.  Motivated by the discussion of the substitution $w_t=1/z_t^n$ at the beginning of the section, define
$c(z, \bar{z}) = |z|^{2n+2} \rho(z, \bar{z})$ where $\rho$ is the invariant probability density function with respect to Lebesgue measure on $\R^2$.  Since $\mathscr{L}^*$ is elliptic and $\mathscr{L}^* \rho=0$, observe that $c$ is a
smooth function everywhere since $\rho$ is smooth everywhere.  To see which equation $c$ satisfies, write $\rho = c|z|^{-2n-2}$ and use the fact that $\mathscr{L}^*\rho=0$ to see that  
\begin{align*}
(|z|^{-2n-2} \mathcal{M}c)(z, \bar{z})=0 \, \text{ for } z\neq 0,
\end{align*}  
where $\mathcal{M}$ is of the form
\begin{align*}
  \mathcal{M} &= \mathscr{L}^*  - \frac{\sigma^2 (n+1)}{\bar{z}} \partial_{z} -
  \frac{\sigma^2 (n+1)}{z} \partial_{\bar{z}} +f(z, \bar{z})
\end{align*}
and the potential $f$ satisfies
\begin{align*}
 f(z, \bar{z})&= -  \partial_{z}(\mathcal{P}(z, \bar{z})) -  \partial_{\bar{z}}(\overline{\mathcal{P}(z, \bar{z})}) \\
 &\,\, +\frac{(n+1)}{z} \mathcal{P}(z, \bar{z}) +
  \frac{(n+1)}{\bar{z}} \overline{\mathcal{P}(z, \bar{z})}  +
  \frac{\sigma^2(n+1)^2}{|z|^2}.  
\end{align*}
In particular, we also note that $c$ solves the equation for $z\neq 0$
\begin{align}
\label{eqn:fc}
(|z|^{-n-1}\mathcal{M} c)(z, \bar{z}) =0.   
\end{align}
Using \eqref{eqn:fc}, we will now apply Feynman-Kac to obtain an expression
for $c(z, \bar{z})$ that can be analyzed as $|z|\rightarrow \infty$.

Now consider the time-changed process $z_{t\wedge S_\gamma}^*$, $\gamma >2$, introduced in Corollary \ref{cor:Lstard}.  Observe that the generator of
$z_{t\wedge S_\gamma}^*$ constitutes every term in $|z|^{-(n-1)}\mathcal{M}$ except for multiplication by the potential function $|z|^{-(n-1)} f(z, \bar{z})$ which is smooth in $(z,
\bar{z})$ for $z\neq 0$ and satisfies
\begin{align*}
  |z|^{-(n-1)} f(z, \bar{z}) = \mathcal{O}(1) \text{ as } |z|\rightarrow \infty.
\end{align*} 
Hence, in particular, $|z|^{-(n-1)}f(z, \bar{z})$ is bounded on the set $\{z\in
\C\,: \, |z| \geq \gamma\}$ for all $\gamma >0$.  Let $S_{\gamma, n}$
be the first exit time of $z_t^*$ from the annulus $A_{\gamma, n}=\{
\gamma <|z|< n \}.$ By Feynman-Kac, we have for $\gamma \geq 2$
\begin{align*}
c(z, \bar{z}) = \E_{(z, \bar{z})} c(z_{t \wedge S_{\gamma,
    n}}^*,\overline{z^*_{t \wedge S_{\gamma, n}}}) e^{\int_{0}^{t
    \wedge S_{\gamma, n}} |z_s^*|^{-(n-1)}f(z_s^*, \overline{z_s^*}) \, ds }, \, \, \,
z\in A_{\gamma, n}. 
\end{align*}  
By Corollary \ref{cor:Lstard}, we have that $z_{t\wedge S_\gamma}^*$
is non-explosive.  Thus by Fatou's lemma, taking the
$\liminf_{n\rightarrow \infty}$ of both sides of the above we obtain
for $|z|\geq \gamma \geq 2$ 
\begin{align*}
c(z, \bar{z}) &\geq \E_{(z, \bar{z})} c(z_{t \wedge
  S_{\gamma}}^*,\overline{z^*_{t \wedge S_{\gamma}}}) e^{\int_{0}^{t
    \wedge S_{\gamma}} |z_s^*|^{-(n-1)}f(z_s^*, \overline{z_s^*}) \, ds }.   
\end{align*}
Applying Corollary
\ref{cor:Lstard} again, for $\gamma >0$ large enough $S_\gamma <
\infty$ almost surely.  Hence, applying Fatou's lemma and taking the $\liminf_{t\rightarrow \infty}$ of both sides of the previous inequality we see that   
\begin{align}
\label{eqn:sub_ineq}
c(z, \bar{z}) &\geq \E_{(z, \bar{z})}
c(z_{S_{\gamma}}^*,\overline{z^*_{S_{\gamma}}})
e^{\int_{0}^{S_{\gamma}} |z_s^*|^{-(n-1)}f(z_s^*, \overline{z_s^*}) \, ds }. 
\end{align}
We now bound the right-hand side of \eqref{eqn:sub_ineq} from below.  Since $$(|z|^{-n-1}\mathcal{M}c)(z, \bar{z})=0 \text{ for } z\neq 0$$and the operator $|z|^{-n-1} \mathcal{M}$ is elliptic for $z\neq 0$, there exists a constant $C(\gamma) >0$ such that 
\begin{align*}
c(z, \bar{z}) \geq C(\gamma)>0, \qquad |z|=\gamma.  
\end{align*} 
Moreover, since $|z|^{-(n-1)}f$ is bounded for $|z|\geq \gamma$, there exists a constant $D(\gamma)>0$ such that 
\begin{align*}
||z|^{-(n-1)}f(z, \bar{z})|\leq D(\gamma), \qquad |z| \geq \gamma.
\end{align*}
Hence, we obtain
\begin{align*} 
c(z, \bar{z}) &\geq  C(\gamma) \E_{(z, \bar{z})} e^{-S_\gamma D(\gamma)}\\
&\geq  C(\gamma) \E_{(z, \bar{z})} e^{-S_\gamma D(\gamma)} 1_{\{S_\gamma \leq t \}}\\
&\geq C(\gamma) e^{-t D(\gamma)}\PP_{(z, \bar{z})} [S_\gamma \leq t] 
\end{align*}
where the inequality above holds for all $\gamma,t >0$.  Applying Corollary \ref{cor:Lstard} once more, we see that for each $t>0$ there exists $\gamma >0$ such that 
\begin{align*}
\inf_{|z| \geq \gamma} c(z, \bar{z}) >0
\end{align*} 
finishing the result.  
\end{proof}


\section{Generalized It\^{o}'s Formula}\label{sec:peskir}
\label{sec:gen_ito}
In this section, we give a differently packaged proof of a weaker version of
Peskir's extension of Tanaka's formula \cite{Peskir_07} which still
affords the structure needed to build the Lyapunov functions contained
in this paper and Part I \cite{HerzogMattingly2013I}.  Instead of making use of Tanaka's formula
as in \cite{Peskir_07}, we opt to mollify along interfaces where the
function is not $C^2$ and then take limits.  For convenience, we deal
solely with the case of a time-homogeneous diffusion process $\xi_t$
on $\R^m$ with generator $$\mathscr{L}=\sum_{j=1}^d f^j(\xi) \partial_{\xi^j} +\sum_{i,j=1}^d
\frac{1}{2}g^{ij}(\xi) \partial_{\xi^i \xi^j}^2$$ where $f^i, g^{ij}$ are locally
Lipschitz and the matrix $(g^{ij})$ is non-negative definite.  Furthermore,
assume that $\varphi \in C(\R^m:\R)$ is such that
\begin{align*}
\varphi(x) = \begin{cases}
\varphi_1(x) & \text{  } x^{m} \leq b(x^1, x^2, \ldots, x^{m-1})\\
\varphi_2(x) & \text{  } x^{m} \geq b(x^1, x^2, \ldots, x^{m-1})
\end{cases}
\end{align*}
where the $\varphi_i$'s are $C^2$ on the domains above and $b\in
C^2(\R^{m-1}: \R)$.  The case of finitely many non-intersecting
boundaries is a simple consequence of the following result.

\begin{theorem}\label{thm:genIto}
Let $\tau_n= \inf\{t>0\, : \, |\xi_t| \geq n \}$.  Then for all $\xi\in \R^m$, $n\in \N$ and all bounded stopping times $\upsilon$:
\begin{align}
  \E_\xi \varphi(\xi_{\upsilon \wedge \tau_n}) = \varphi(\xi) &+  \E_{\xi}
  \int_0^{\upsilon \wedge \tau_n}\big[ \tfrac{1}{2}\mathscr{L} \varphi(\xi_s^1,
  \ldots, (\xi_s^m)^+)+   \tfrac{1}{2}\mathscr{L} \varphi(\xi_s^1, \ldots, (\xi_s^m)^-) \big]\, ds+  \text{\emph{Flux}}(\xi, \upsilon, n)
\end{align}
where 
\begin{align*}
  &(\mathscr{L} \varphi)(\xi^1, \ldots, (\xi^m)^+) = \lim_{x^{m}\downarrow \xi^m} (\mathscr{L} \varphi)(\xi^1, \ldots,\xi^{m-1}, x^{m})\\
  & (\mathscr{L} \varphi)(\xi^1, \ldots, (\xi^m)^-) =
  \lim_{x^{m}\uparrow \xi^m} (\mathscr{L} \varphi)(\xi^1,
  \ldots,\xi^{m-1}, x^{m})
\end{align*}
and $\text{\emph{Flux}}(\xi,t,n)$ satisfies the following properties: 
\begin{itemize}
\item If $\partial_{x^{m}} \varphi_2(x)- \partial_{x^{m}} \varphi_1(x) \leq  0$ for all $x\in\R^m$ with $x^{m}=b(x^1, \ldots,x^{m-1})$, then $\text{\emph{Flux}}(\xi, \upsilon, n)\in (-\infty, 0]$ and $\text{\emph{Flux}}(\xi, t, k) \leq  \text{\emph{Flux}}(\xi, s, n)$ for $s\leq t$ and $n\leq k$. 
\item If $\partial_{x^{m}} \varphi_2(x)- \partial_{x^{m}} \varphi_1(x) \geq  0$ for all $x\in\R^m$ with $x^{m}=b(x^1, \ldots,x^{m-1})$, then $\text{\emph{Flux}}(\xi, \upsilon, n)\geq 0$ and the maps $\upsilon \mapsto \text{\emph{Flux}}(\xi, \upsilon, n)$, $n\mapsto  \text{\emph{Flux}}(\xi, \upsilon, n)$ are increasing.  
\end{itemize}
\end{theorem}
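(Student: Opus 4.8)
\textbf{Proof proposal for Theorem~\ref{thm:genIto}.}

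The plan is to reduce to the classical It\^o formula by two preliminary flattening/mollification steps, apply the standard formula, and then identify the residual interface term as the Flux. First I would straighten the boundary: introduce the $C^2$ change of coordinates $y^i = x^i$ for $i<m$ and $y^m = x^m - b(x^1,\dots,x^{m-1})$, so that in the $y$-variables the interface becomes the hyperplane $\{y^m=0\}$ and $\varphi$ becomes a continuous function which is $C^2$ on each half-space $\{y^m\lessgtr 0\}$. Since $b\in C^2$, this change of variables transforms $\xi_t$ into another It\^o diffusion with locally Lipschitz coefficients and a new generator of the same type, and it carries the stopping times $\tau_n$ to comparable ones; so it suffices to prove the statement when the interface is literally $\{x^m=0\}$. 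I would note the reflection identities $(\mathscr{L}\varphi)(\xi^1,\dots,(\xi^m)^\pm)$ are exactly the one-sided second-order limits, which are finite because each $\varphi_i$ is $C^2$ up to the boundary.

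Next I would mollify in the $m$-th coordinate only. Fix a smooth nonnegative kernel $j_\varepsilon$ on $\R$ supported in $(-\varepsilon,\varepsilon)$ with unit mass, and set $\varphi^\varepsilon(x) = \int_\R \varphi(x^1,\dots,x^{m-1},x^m - u)\, j_\varepsilon(u)\, du$. Then $\varphi^\varepsilon\in C^2(\R^m)$, $\varphi^\varepsilon\to\varphi$ locally uniformly, $\nabla\varphi^\varepsilon$ is locally bounded uniformly in $\varepsilon$ (the only possible jump is in $\partial_{x^m}\varphi$, which stays bounded), and away from the slab $\{|x^m|<\varepsilon\}$ we have $\mathscr{L}\varphi^\varepsilon\to\mathscr{L}\varphi$ with the appropriate one-sided interpretation on $\{x^m=0\}$. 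Applying the ordinary It\^o formula to $\varphi^\varepsilon$ along $\xi_{t\wedge\tau_n}$ gives
\begin{align*}
\E_\xi\varphi^\varepsilon(\xi_{\upsilon\wedge\tau_n}) = \varphi^\varepsilon(\xi) + \E_\xi\int_0^{\upsilon\wedge\tau_n}(\mathscr{L}\varphi^\varepsilon)(\xi_s)\, ds.
\end{align*}
I would split the time integral into the part where $|\xi_s^m|\ge\varepsilon$ and the part where $|\xi_s^m|<\varepsilon$. On the first part, dominated convergence (using the uniform local bounds on $\mathscr{L}\varphi^\varepsilon$, which follow from the uniform bounds on $D^2\varphi^\varepsilon$ on the two half-slabs and the local boundedness of $f,g$) sends the integrand to $\mathscr{L}\varphi(\xi_s)$, which equals the symmetric average $\tfrac12\mathscr{L}\varphi(\xi_s^1,\dots,(\xi_s^m)^+)+\tfrac12\mathscr{L}\varphi(\xi_s^1,\dots,(\xi_s^m)^-)$ off the interface and also on it (since occupation of the exact set $\{x^m=0\}$ has Lebesgue-null time a.s., or one simply defines the integrand by the symmetric average there).

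The heart of the matter is the slab term: define
\begin{align*}
\text{Flux}^\varepsilon(\xi,\upsilon,n) = \E_\xi\int_0^{\upsilon\wedge\tau_n}(\mathscr{L}\varphi^\varepsilon)(\xi_s)\, \mathbf{1}_{\{|\xi_s^m|<\varepsilon\}}\, ds,
\end{align*}
and show it converges to a limit $\text{Flux}(\xi,\upsilon,n)$ with the stated sign and monotonicity. Here I expect the main obstacle to lie. The dominant contribution comes from the second-order term $\tfrac12 g^{mm}(\xi_s)\partial_{x^m}^2\varphi^\varepsilon(\xi_s)$, since $\partial_{x^m}^2\varphi^\varepsilon$ behaves like $\varepsilon^{-1}$ times a bump picking up the jump $[\partial_{x^m}\varphi_2-\partial_{x^m}\varphi_1]$ across the interface; all other terms of $\mathscr{L}\varphi^\varepsilon$ involve at most one $x^m$-derivative and hence stay bounded, so their slab integrals vanish as $\varepsilon\to0$ by the thinness of the slab. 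To handle the singular term I would invoke the occupation-time/local-time formula for the semimartingale $\xi^m_t$: $\int_0^t h(s)\,\mathbf{1}_{\{|\xi_s^m|<\varepsilon\}}\, d\langle\xi^m\rangle_s = \int_{-\varepsilon}^{\varepsilon} (\text{appropriate local-time integral})$, so that
\begin{align*}
\E_\xi\int_0^{\upsilon\wedge\tau_n}\tfrac12 g^{mm}(\xi_s)\,\partial_{x^m}^2\varphi^\varepsilon(\xi_s)\,\mathbf{1}_{\{|\xi_s^m|<\varepsilon\}}\, ds \longrightarrow \tfrac12\,\E_\xi\big[\text{(jump of }\partial_{x^m}\varphi\text{ evaluated along the interface)}\cdot L^0_{\upsilon\wedge\tau_n}(\xi^m)\big],
\end{align*}
where $L^0$ is the (semimartingale) local time of $\xi^m$ at $0$; continuity of $g^{mm}$ and of the $\varphi_i$ up to $\{x^m=0\}$ let one pull the value of $g^{mm}$ and of the jump out to the interface in the limit. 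Since $L^0\ge0$, is nondecreasing in $t$, and is monotone in $n$ through $\tau_n$, and since under the transformed coordinates the jump $\partial_{x^m}\varphi_2-\partial_{x^m}\varphi_1$ at $y^m=0$ has the same sign as in the original $x$-coordinates (the $C^2$ coordinate change only rescales the normal derivative by the positive factor coming from the Jacobian), the resulting Flux is $\le0$ (resp.\ $\ge0$) under the first (resp.\ second) hypothesis and has the claimed monotonicity in $\upsilon$ and $n$. Assembling the three pieces and passing $\varepsilon\to0$ in the It\^o identity for $\varphi^\varepsilon$ yields the theorem; the case of finitely many non-intersecting interfaces follows by localizing near each interface and summing the individual Flux contributions, and then transferring back through the coordinate change.

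One technical point I would be careful about: justifying the interchange of $\lim_{\varepsilon\to0}$ with $\E_\xi$ and with the time integral simultaneously. For the non-singular pieces this is dominated convergence with the uniform local gradient bound plus the a.s.\ finiteness of $\upsilon\wedge\tau_n$ and boundedness of $|\xi_{s\wedge\tau_n}|$ by $n$. For the singular piece the clean route is to first rewrite the slab integral exactly (not just in the limit) using the occupation density of $\xi^m$, i.e.\ to write $\int_0^{t}\psi^\varepsilon(\xi_s^m)\,\mathbf{1}_{\{|\xi_s^m|<\varepsilon\}}\, d\langle\xi^m\rangle_s$ against the local time $L^a_t(\xi^m)$ in the spatial variable $a$, and then use the a.s.\ continuity of $a\mapsto L^a_t$ together with $\int\psi^\varepsilon = $ the fixed jump, so the $\varepsilon$-limit is a genuine Riemann-type limit rather than requiring a delicate dominated-convergence argument inside the expectation. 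This mirrors Peskir's argument \cite{Peskir_07} but packages the interface term through mollification and occupation times rather than through a direct appeal to Tanaka's formula, which is all we need here.
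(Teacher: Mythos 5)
Your proof is essentially correct, but it takes a genuinely different route from the paper's for the key step of identifying the interface term. The paper also mollifies and applies Dynkin's formula, but it keeps the curved boundary and the full $m$-dimensional mollifier $\chi_\epsilon$, computes $\partial^2_{\xi^i\xi^j}\varphi_\epsilon$ by integrating by parts separately over $U^\pm$, uses a chain-rule identity along $\Gamma$ to show that all tangential gradient jumps reduce to the normal jump $\partial_{x^m}\varphi_2-\partial_{x^m}\varphi_1$, and thereby writes the singular contribution as an explicit surface integral $\int_\Gamma \chi_\epsilon(\xi_s-x)(\partial_{x^m}\varphi_2-\partial_{x^m}\varphi_1)\sqrt{1+|\nabla b|^2}\sum g^{ij}\boldsymbol{\sigma}^i\boldsymbol{\sigma}^j\,dS_\Gamma$; the sign and monotonicity of $\mathrm{Flux}$ then follow immediately from $\sum g^{ij}\boldsymbol{\sigma}^i\boldsymbol{\sigma}^j\ge 0$ \emph{before} any limit is taken, with no appeal to local times. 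You instead flatten the boundary first (which is exactly what makes the one-variable mollification and the weighted occupation-time formula applicable) and represent the flux as $\tfrac12\E_\xi\int_0^{\upsilon\wedge\tau_n}(\partial_{x^m}\varphi_2-\partial_{x^m}\varphi_1)\,dL^0_s(\xi^m)$; this is closer to Peskir's original argument, which the paper deliberately repackages to avoid Tanaka's formula. Your route buys an explicit probabilistic formula for the flux; the paper's buys a softer argument in which positivity is manifest at the $\epsilon$-level and no regularity of $a\mapsto L^a_t$ is ever needed. Two small points to tighten: (i) for a merely non-negative definite $(g^{ij})$ the level set $\{\xi^m_s=0\}$ can carry positive occupation time, so the bounded first- and mixed-derivative terms over the slab do not simply "vanish by thinness of the slab" — they converge to an on-interface contribution that must be absorbed into the symmetric average (and $a\mapsto L^a_t$ is then only c\`adl\`ag, so you should use a symmetric kernel and the symmetric local time); (ii) after the change of variables $y^m=x^m-b$, the transformed drift involves $\partial^2 b$, which is only continuous, so you should phrase the reduction for general It\^o processes rather than diffusions with locally Lipschitz coefficients. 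Neither point is fatal.
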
   

\begin{proof}[Proof of Theorem~\ref{thm:genIto}]
  Because we will stop the process $\xi_t$ at time $\tau_n$, without
  loss of generality we may assume that $\varphi$ has compact support
  (e.g. in a ball centered at the origin with radius much larger than
  $n$).  Let $\chi:\R^m\rightarrow \R$ be a smooth mollifier, set
  $\chi_\epsilon(\xi)= \epsilon^{-m} \chi(\epsilon^{-1} \xi)$ and
  define
\begin{align*}
\varphi_\epsilon(\xi)  =\int_{\R^m} \chi_\epsilon(\xi-x) \varphi(x)\, dx.  
\end{align*} 
Applying Dynkin's formula we have
\begin{align}
\label{eqn:mol_dyn}
\E_\xi \varphi_\epsilon(\xi_{\upsilon\wedge \tau_n}) - \varphi_\epsilon(\xi)=
\E_{\xi}\int_0^{\upsilon\wedge \tau_n} (\mathscr{L} \varphi_\epsilon)(\xi_s) \,
ds.
\end{align}
To obtain the desired formula, we begin computing partial derivatives of $\varphi_\epsilon$.  To keep expressions compact, we will use $\partial_{\xi^j}$ to denote $\frac{\partial}{\partial \xi^j}$.    
Write
\begin{align*}
  \varphi_\epsilon(\xi) =\int_{U^-} \chi_\epsilon(\xi-x)
  \varphi_1(x)\, dx + \int_{U^+} \chi_\epsilon(\xi-x) \varphi_2(x)\,
  dx .
\end{align*}
where $U^{-} = \{x^{m} < b(x^1, \ldots, x^{m-1}) \}$ and $U^{+}= \{ x^{m} \geq b(x^1, \ldots, x^{m-1})\}$.  Integrate by parts once and use the fact that $\varphi_1$ and $\varphi_2$ agree on the boundary $\Gamma=\{x\in \R^m\,: \, x^{m} = b(x^1, \ldots, x^{m-1}) \}$ to see that 
\begin{align*}
\partial_{\xi^j} \varphi_\epsilon (\xi)  &=-\int_{U^-} \partial_{x^j} \chi_\epsilon (\xi-x)\varphi_1(x)\, dx - \int_{U^+}  \partial_{x^j}\chi_\epsilon (\xi-x) \varphi_2(x)\, dx \\
&= \int_{U^-} \chi_\epsilon(\xi-x) \partial_{x^j} \varphi_1(x)\, dy + \int_{U^+} \chi_\epsilon(\xi-x) \partial_{x^j} \varphi_2 (x)\, dx.  
\end{align*}
Using the equality on the previous line, apply $\partial_{\xi^i}$ to both sides and then integrate by parts in the same fashion to obtain 
\begin{align}\label{eqn:2ndder}\partial^2_{\xi^i \xi^j} \varphi_\epsilon(\xi)
&=\int_{U^-} \chi_\epsilon(\xi-x) \partial^2_{x^i x^j} \varphi_1(x)\, dx + \int_{U^+} \chi_\epsilon(\xi-x)
 \partial^2_{x^i x^j}  \varphi_2(x)\, dx
 \\\nonumber &\qquad + \int_\Gamma \chi_{\epsilon}(\xi-x)
\big(\partial_{x^j} \varphi_2 -\partial_{x^j}
  \varphi_1  \big)(x)\boldsymbol{\sigma}^i \, dS_\Gamma(x)
\end{align}
where $\Gamma = \{x: x^{m} = b(x^1, \ldots, x^{m-1}) \}$ and $\boldsymbol{\sigma}^i$ is
the $i$-th component of the unit surface normal vector $\boldsymbol{\sigma}=(-\nabla b(x),
1)/\sqrt{1+ |\nabla b(x)|^2}$ of $\Gamma$.  We now claim that for $x\in \Gamma$ and $j=1,\ldots, m-1$ 
\begin{align*}
(\partial_{x^j} \varphi_2-\partial_{x^j}
  \varphi_1)(x) = \big(\partial_{x^{m}}
  \varphi_2-\partial_{x^{m}} \varphi_1\big)(x) \boldsymbol{\sigma}^j \sqrt{1+|\nabla b(x)|^2}. 
\end{align*}  
To prove this claim, for $x\in \R^m$ define
\begin{align*}
h(x^1, \ldots, x^{m-1})= \varphi(x^1, \ldots, x^{m-1}, b(x^1, \ldots, x^{m-1})).
\end{align*}
Since $b\in C^1(\R^{m-1}: \R)$, $h\in C^1(\R^{m-1}: \R)$.  Moreover, $\varphi_i $ are $C^2$ on their closed domains of definition, each of which include the boundary $\Gamma$.  Hence, computing derivatives we see that for $j=1, \ldots, m-1$ and $i=1,2$ 
\begin{multline*}
\partial_{x^j} h(x^1, \ldots, x^{m-1})= (\partial_{x^j}\varphi_i)(x^1, \ldots, x^{m-1},b(x^1, \ldots, x^{m-1}))\\
 +(\partial_{x^m}\varphi_i)(x^1, \ldots, x^{m-1},b(x^1, \ldots, x^{m-1})) \partial_{x^j} b(x^1, \ldots, x^{m-1})  
\end{multline*}
for $i=1,2$.   Therefore 
\begin{align*}
0 &= \partial_{x^j} h (x^1, \ldots, x^{m-1}) -  \partial_{x^j} h (x^1, \ldots, x^{m-1})\\
&= (\partial_{x^j}\varphi_2)(x^1, \ldots, x^{m-1},b(x^1, \ldots, x^{m-1}))- (\partial_{x^j}\varphi_1)(x^1, \ldots, x^{m-1},b(x^1, \ldots, x^{m-1}))\\
 &\qquad+(\partial_{x^m}\varphi_2) (x^1, \ldots, x^{m-1},b(x^1, \ldots, x^{m-1}))\partial_{x^j}b(x^1, \ldots, x^{m-1})\\
 &\qquad-( \partial_{x^m}\varphi_1 )(x^1, \ldots, x^{m-1},b(x^1, \ldots, x^{m-1}))\partial_{x^j}b(x^1, \ldots, x^{m-1}),
\end{align*}  
from which the claim now follows.  Since $\boldsymbol{\sigma}^m=1/\sqrt{1+ |\nabla b|^2}$, the claim in particular allows us to write  
\begin{align}
  \partial^2_{\xi^i \xi^j} \varphi_\epsilon(\xi)
  &=\int_{U^-} \chi_\epsilon(\xi-x) \partial_{x^i x^j}^2 \varphi_1(x)\, dx + \int_{U^+}
  \chi_\epsilon(\xi-x)  \partial_{x^i x^j}^2 \varphi_2(x)\, dx\\ 
  \nonumber &\qquad + \int_\Gamma \chi_{\epsilon}(\xi-x)
  \big(\partial_{x^{m}} \varphi_2 -\partial_{x^{m}}
    \varphi_1 \big)(x)\boldsymbol{\sigma}^i\boldsymbol{\sigma}^j \sqrt{1+|\nabla b(x)|^2} \, dS_\Gamma(x).
\end{align}
for $i, j=1,2, \ldots, m$.

Let us now see what the computations above tell us.  Letting $*$ denote convolution,
we can now write \eqref{eqn:mol_dyn} as
\begin{align}
  \label{eqn:conv}&\E_\xi \varphi_\epsilon(\xi_{\upsilon\wedge \tau_n}) -
  \varphi_\epsilon(\xi) \\ 
  \nonumber &- \E_{\xi} \sum_{j=1}^m \int_0^{\upsilon\wedge \tau_n} f^j(\xi_s) (
  \chi_\epsilon * 1_{U^-} \partial_{\xi^j} \varphi_1 )(\xi_s) +
 f^j(\xi_s)  (\chi_\epsilon*1_{U^+} \partial_{\xi^j} \varphi_2)(\xi_s) \, ds
  \\ 
  \nonumber&- \frac{1}{2}\E_{\xi} \sum_{i,j=1}^m  \int_0^{\upsilon \wedge \tau_n} g^{ij}(\xi_s) (
  \chi_\epsilon * 1_{U^-} \partial_{\xi^i \xi^j} \varphi_1 )(\xi_s) +
 g^{ij}(\xi_s) (\chi_\epsilon*1_{U^+} \partial_{\xi^i \xi^j} \varphi_2)(\xi_s) \,
  ds\\ 
  \nonumber &=\frac{1}{2} \E_{\xi}\int_0^{\upsilon \wedge \tau_n}\int_\Gamma \big(\partial_{x^{m}}
    \varphi_2-\partial_{x^{m}} \varphi_1 \big)(x)\chi_\epsilon(\xi_s -x)\sqrt{1+|\nabla b(x)|^2}\sum_{i,j=1}^m
  g^{ij}(\xi_s)\boldsymbol{\sigma}^i\boldsymbol{\sigma}^j\, dS_\Gamma(x)ds
\end{align}
Since $f^i$, $g^{ij}$ are locally bounded, by dominated convergence we may pass the the limit as $\epsilon \downarrow 0$ through all integrals on the lefthand side to see that 
\begin{align*}
  &\E_\xi \varphi(\xi_{\upsilon \wedge \tau_n})-  \varphi(\xi) -\tfrac{1}{2}\E_{\xi}
  \int_0^{\upsilon \wedge \tau_n } [(\mathscr{L} \varphi)(\xi_s^1,
  \ldots, (\xi_s^{d})^+)]\, ds-\tfrac{1}{2}\E_{\xi} \int_0^{\upsilon \wedge \tau_n } [(\mathscr{L}
  \varphi)(\xi_s^1, \ldots, (\xi_s^{d})^-)]\, ds\\
  &=\lim_{\epsilon \downarrow 0 } \frac{1}{2}\E_{\xi}\int_0^{\upsilon\wedge \tau_n}\int_\Gamma \big(\partial_{x^{m}}
    \varphi_2-\partial_{x^{m}} \varphi_1 \big)(x)\chi_\epsilon(\xi_s -x)\sqrt{1+|\nabla b(x)|^2}\sum_{i,j=1}^m
  g^{ij}(\xi_s)\boldsymbol{\sigma}^i\boldsymbol{\sigma}^j\, dS_\Gamma(x)ds\\
  &:= \text{Flux}(\xi, \upsilon, n).  
  \end{align*} 
To see that
$\text{Flux}(\xi,\upsilon , n)$ has the claimed properties, note that since the matrix $(g^{ij})$ is
non-negative we have that
\begin{equation*}
   \chi_{\epsilon}(\xi_s-x)\sqrt{1+ |\nabla b(x)|^2} \sum_{i,j}
g^{ij}(\xi_s) \boldsymbol{\sigma}^i \boldsymbol{\sigma}^j\geq 0.  
\end{equation*}
Also, the surface measure $dS_\Gamma$ is a nonnegative measure.  In particular,  $\text{Flux}(\xi, \upsilon, n)$ satisfies all claimed properties of the result.    
\end{proof}

\section{Conclusions}
\label{sec:concluions}
The techniques developed in this and its accompanying work provide a
general framework for constructing a Lyapunov function well adapted to
the dynamics of a particular problem. This systematic approach began
in \cite{AKM_12}. Here, however, a significant number of advances have
been made, allowing us to both cover a much larger class of problems
and simplify many details in the analysis. In particular, the use of a
generalized Tanaka formula \cite{Peskir_07} greatly simplifies the
patching together of the piecewise-defined Lyapunov functions when
compared to the treatments of the similar situations in
\cite{AKM_12,BodDoe_12,GHW_11}.

A few considerations remain incomplete in this
work. Section~\ref{I-sec:spacing} in Part I
\cite{HerzogMattingly2013I} makes a compelling argument supported by
numerical simulations for the scaling of large excursions. It would be
interesting to add the missing details, producing a rigorous
argument. Related to this, it would be also interesting to see if one
could scale time and space so that in the limit of vanishing noise,
the system converges to a random distribution on the loops of the
underlying deterministic system. The limiting loop system would be in
the spirit of the random spider/graph processes considered in
\cite{FreidlinWentzell} and subsequent works.  A possible path one 
could take to achieve this (and also some of the results of this paper) is 
to work in the coordinate variable $w=1/z^n$ as done in Section~\ref{sec:optimality}.  One could then obtain path properties of the diffusion $w_t=1/z_t^n$ near the origin in the $w$-plane by controlling the martingale part using the exponential martingale inequality.  If present, the lower-order terms in the drift would then have to be dealt with, perhaps by using time-changes and Girsanov transformations and/or further substitutions inspired by those made in Section~\ref{sec:PiecewiseBeh} of this paper.

There also is a number of possible directions for generalization. Here we have only considered complex polynomials whose highest order term is the monomial $a z^{n+1}$. More generally, one could consider leading-order monomial terms of the form $a z^{k}\bar z^j$ where $k+j=n+1$. If
$k > j+1$, then the system with noise added can be proven to be stable by
essentially the same arguments used in this and its companion
paper \cite{HerzogMattingly2013I}.  In this case, the invariant measure will again have polynomial decay at
infinity.  If $k=j$, then the system is trivially stable if $a<0$ and trivially
unstable if $a>0$. In this case, the norm-squared $|z|^2$ is easily
shown to be a Lyapunov function if $a<0$. If $j<k$ then the
deterministic flow rotates towards the unstable directions and not
away from them as was the case when $k<j$.  Here one expects to be able
to prove that they system blows-up with probability one.  More interesting is the case when the leading order monomial is
replaced by a polynomial made up of terms which all have homogeneity
$n+1$ under the radial scaling $z \mapsto \lambda z$. This will
produce a richer collection of possibilities. Nonetheless, we expect
the ideas contained in these notes to be very useful in determining
and proving stability properties when noise is added.

Another possible direction of generalization would be to consider
state-dependent noise, e.g. $\sigma \, dB_t\mapsto \sigma(z,\bar{z}) \, dB_t$ where 
$\sigma(z, \bar{z})$ is a suitable polynomial.  In many cases, analysis of the resulting stochastic system should be possible so long as $\sigma(z, \bar{z})$ does not grow too fast at infinity (relative to the leading-order drift term $a z^{n+1}$).    
Also, the analysis may be greatly simplified in some cases by transforming to an equation  
with additive noise by a time change and/or substitution.  A more difficult
direction of generalization would be to consider higher dimensional
unstable ODEs under the addition of noise. Here the geometry of the
underlying, deterministic dynamics can be quite complicated, if not
chaotic. In this work, we relied on the simplicity of the underling
dynamics in our analysis. Understanding how different regions patch
together could be much more complicated, if not intractable, in higher dimensions.  The most interesting and wide-open direction to pursue
would be to consider an unstable deterministic PDE and show that it stabilizes under the addition of noise.

\section*{Acknowledgments}
We would also like to acknowledge partial support of the NSF through
grant DMS-08-54879 and the Duke University Dean's office.

\bibliographystyle{plain}
\bibliography{polyZ}

\end{document}